\newcommand{\vect}[1]{\boldsymbol{\mathbf{#1}}}
\newcommand{\kernel}{G}
\newcommand{\JKV}{J}
\newcommand{\Uad}{\mathcal{U}}
\newcommand{\Aad}{\mathcal{A}}
\newcommand{\uk}{u_{k}}
\newcommand{\gk}{g_{k}}
\newcommand{\fk}{f_{k}} 
\newcommand{\ui}{u_{i}}  
\newcommand{\un}{u_{N}}
\newcommand{\ud}{u_{D}}  
\newcommand{\wdn}{w}  
\newcommand{\wdne}{w^{\varepsilon}}  
\newcommand{\dwdn}{\widetilde{w}}  
\newcommand{\unk}{u_{N,k}}
\newcommand{\udk}{u_{D,k}} 
\newcommand{\une}{u_{N}^{\varepsilon}}
\newcommand{\ude}{u_{D}^{\varepsilon}}
\newcommand{\uie}{\ui^{\varepsilon}}
\newcommand{\alpe}{\alpha^{\varepsilon}} 
\newcommand{\dun}{\widetilde{u}_{N}}
\newcommand{\dud}{\widetilde{u}_{D}}
\newcommand{\dui}{\widetilde{u}_{i}}
\newcommand{\sfTheta}{\Theta}
\newcommand{\VV}{\theta}
\newcommand{\Vn}{\theta_n}
\newcommand{\nn}{n}     
\newcommand{\dn}[1]{\partial_{\nn}{#1}} 
\newcommand{\dalp}[1]{\frac{\partial{#1}}{\partial \alpha}}
\newcommand{\intO}[1]{\int_{\Omega}{#1}{\, {dx}}}
\newcommand{\intS}[1]{\int_{\Sigma}{#1}{\, {ds}}}  
\newcommand{\intG}[1]{\int_{\Gamma}{#1}{\, {ds}}}  
\newcommand{\inS}[1]{\langle{#1}\rangle}  
\newcommand{\norm}[1]{\left\|{#1}\right\|}
\begin{document}
\mainmatter              
\title{Simultaneous recovery of a corroded boundary and admittance using the Kohn-Vogelius method${}^{\ast}$}
\titlerunning{Simultaneous recovery of corroded boundaries and admittance using the Kohn-Vogelius method}  
%
\author{Moustapha Essahraoui\inst{1}
\and Elmehdi Cherrat\inst{1} \and Lekbir Afraites \inst{1}\and Julius Fergy Tiongson Rabago\inst{2}}
\authorrunning{Moustapha Essahraoui et al.} 
%
%
\institute{Mathematics and Interactions Teams (EMI)\\%
        Faculty of Sciences and Techniques\\%
        Sultan Moulay Slimane University\\%
        Beni Mellal, Morocco,\\
\email{essahraouimoustapha@gmail.com,\ cherrat.elmehdi@gmail.com,\\ \ l.afraites@usms.ma}\\ 
\and
Faculty of Mathematics and Physics\\%
	 Institute of Science and Engineering\\%
         Kanazawa University, Kanazawa 920-1192, Japan\\
         \email{jftrabago@gmail.com}
}    

\maketitle              
\begin{abstract}
We address the problem of identifying an unknown portion $\Gamma$ of the boundary of a $d$-dimensional ($d \in \{1, 2\}$) domain $\Omega$ and its associated Robin admittance coefficient, using two sets of boundary Cauchy data $(f, g)$--representing boundary temperature and heat flux--measured on the accessible portion $\Sigma$ of the boundary. Identifiability results \cite{Bacchelli2009,PaganiPierotti2009} indicate that a single measurement on $\Sigma$ is insufficient to uniquely determine both $\Gamma$ and $\alpha$, but two independent inputs yielding distinct solutions ensure the uniqueness of the pair $\Gamma$ and $\alpha$. 
In this paper, we propose a cost function based on the energy-gap of two auxiliary problems. 
We derive the variational derivatives of this objective functional with respect to both the Robin boundary $\Gamma$ and the admittance coefficient $\alpha$. 
These derivatives are utilized to develop a nonlinear gradient-based iterative scheme for the simultaneous numerical reconstruction of $\Gamma$ and $\alpha$. 
Numerical experiments are presented to demonstrate the effectiveness and practicality of the proposed method.
\footnotetext[1]{in \textit{Computational Methods for Inverse Problems and Applications}, ICMDS 2024, Khouribga, Morocco, October 21--22, 2024, Springer Proc. Math. Stat., vol. 498, Springer, Cham, 2025.}

\keywords{Geometric inverse problem, robin boundary condition, shape optimization, shape derivatives, simultaneous recovery}
\end{abstract}
%
\section{Introduction}
\label{sec:Introduction}
We consider a classical inverse geometry problem arising in non-destructive testing and evaluation, specifically in the detection of an internally corroded boundary.
Our objective is to determine a doubly connected domain $\Omega$ in $\mathbb{R}^{d}$, $d \in \{2,3\}$, externally bounded by the accessible boundary $\Sigma$ and internally by the unknown boundary $\Gamma$, along with the coefficient $\alpha$ that appears in the boundary condition of the PDE model. 
This is done using two pairs of Cauchy data, $\{(\fk, \gk)\}$, $k=1,2$, on $\Sigma$ for a harmonic function $u$ defined in $\Omega$.
On the unknown boundary $\Gamma$, the function $u$ is assumed to satisfy a homogeneous Robin boundary condition. 
Consequently, given $\Sigma$ and $\{(\fk, \gk)\}$, $k=1,2$, we examine the overdetermined boundary value problem:
\begin{equation}\label{eq:main_problem}
    - \Delta{\uk} = 0 \quad \text{in}\ \Omega, \quad {\uk} = {\fk}, \quad \frac{\partial {\uk}}{\partial n} = {\gk} \quad \text{on}\ \Sigma, \quad \frac{\partial {\uk}}{\partial n} + \alpha {\uk} = 0 \quad \text{on}\ \Gamma,
\end{equation}
where $\alpha \in L^{\infty}(\mathbb{R}^{d})$ such that $\alpha(x) \geqslant \alpha_{0} > 0$ for all $x \in \Gamma$, and $\alpha_{0}$ is a known constant. 
The vector $\nn$ denotes the outward unit normal to $\partial\Omega$.

In practical applications, the goal is to identify the inaccessible boundary using electrostatic measurements or thermal imaging techniques on the externally accessible part $\Sigma$. 
For instance, $u$ can be interpreted as the electrostatic potential in a specimen $\Omega$, where only the boundary portion $\Sigma$ is accessible for measurements. 
In other words, \eqref{eq:main_problem} can be interpreted as the determination of the shape of the inaccessible boundary portion $\Gamma$ based on two sets of imposed voltages ${\uk}|_{\Sigma}$ and the corresponding measured resulting currents $\dn{{\uk}}|_{\Sigma}$ on $\Sigma$.
Thus, in the inverse problem setting (see, e.g., \cite{FasinoInglese2007,KaupSantosaVogelius1996}), we are interested in solving the following:
\begin{problem} \label{prob:main_problem}
Determine the unknown boundary $\Gamma$, the Robin coefficient $\alpha$, and the electrostatic potentials $u_{k}$, for $k = 1, 2$, that satisfy the Cauchy data problem:
\[
    - \Delta{\uk} = 0 \quad \text{in}\ \Omega, \quad {\uk} = \fk, \quad \frac{\partial {\uk}}{\partial n} = \gk \quad \text{on}\ \Sigma, \quad \frac{\partial {\uk}}{\partial n} + \alpha {\uk} = 0 \quad \text{on}\ \Gamma.
\] 
\end{problem}
Extensive theoretical and numerical investigations have been conducted on the inverse problem of determining $\alpha$ from a single measurement of $u$ on a sub-boundary of $\Sigma$, assuming that $\Gamma$ is known. 
Notable works in this area include \cite{ChaabaneJaoua1999,ChaabaneElhechmiJaoua2004,FangLu2004,IngleseMariani2004,Jin2007,KaupSantosa1995,LinFang2005}, among others.
In contrast, numerical studies on recovering $\Gamma$ from measurements on part or all of $\Sigma$, assuming $\alpha$ is known, are discussed in \cite{AfraitesRabago2025,AfraitesRabago2024,FangLinMa2019,FangZeng2009,Hauptmannetal2019,RabagoAzegami2018}.
Additionally, the more general problem of simultaneous reconstruction of both $\Gamma$ and $\alpha$ has been addressed in \cite{Bacchelli2009,PaganiPierotti2009,Sincich2010}. 
Analytical and numerical recovery approaches, based on the equivalent boundary integral equation formulation of the problem, are presented in \cite{CakoniKress2007,CakoniKressSchuft2010a,CakoniKressSchuft2010b}.
Recently, a numerical shape optimization approach for the simultaneous recovery of $\Gamma$ and $\alpha$, using two Cauchy pairs and leveraging shape derivatives, was introduced in \cite{Fang2022}. 
Motivated by this study, we propose an energy-gap cost functional for the simultaneous recovery of the unknown inaccessible interior boundary $\Gamma$ and the Robin coefficient $\alpha$, as an alternative to the least-squares boundary tracking functional used in \cite{Fang2022}.

Let us briefly explain why two sets of Cauchy data are required in Problem~\ref{prob:main_problem}, which relates to the identifiability issue. 
Cakoni and Kress \cite{CakoniKress2007} showed that, for constant $\alpha$, a single Cauchy pair $(f, g)$ on $\Sigma$ may correspond to infinitely many domains $\Omega$. 
They provided counterexamples demonstrating that one pair is insufficient to uniquely determine both $\Gamma$ and $\alpha$. 
On the other hand, Bacchelli \cite{Bacchelli2009} proved that two linearly independent Cauchy pairs ensure the unique identification of $\Gamma$ and $\alpha$, provided one input is positive. 
Similarly, Pagani and Pierotti \cite{PaganiPierotti2009} established uniqueness using two measurements. 
On a different note, for stability results with two independent inputs, we refer readers to to Sincich \cite{Sincich2010}.
%
%

\textit{Problem setting.} Let us now present our problem in a mathematically precise manner.
We let $D \subset \mathbb{R}^{d}$, $d \in \{2, 3\}$, be a $C^{2,1}$ bounded open set and for a fixed real number $\delta > 0$, define the admissible set of unknown inclusions $\Uad$ as follows:
\[
\Uad := \left\{ \omega \Subset D \mid \omega \in C^{2,1}, \, d(x, \partial D) > \delta \ \forall{x} \in \omega, \text{ and } D \setminus \overline{\omega} \text{ is connected} \right\}.
\]
We denote $\Omega := D \setminus \overline{\omega}$, $\Sigma := \partial D$, and $\Gamma := \partial \omega$, and assume (only for technical purposes; see \cite{AfraitesRabago2025}) that $f \in H^{5/2}(\Sigma)$, $f \not\equiv 0$.
Moreover, we let $g \in H^{3/2}(\Sigma)$ be an admissible boundary measurement corresponding to $f$.
Furthermore, we restrict our investigation to the following admissible set of Robin parameters:
\[
\Aad := \left\{ \alpha \in L^{\infty}(\Gamma) \mid \text{$\alpha_0 \leqslant \alpha \leqslant \alpha_1$ for a.e. $x \in \Gamma$} \right\}.
\]

In this study, we tacitly assumed that we can find $(\omega^{\ast}, \alpha^{\ast}) \in \Uad \times \Aad$ such that Problem~\ref{prob:main_problem} has a solution. 
That is, we assume that the surface measurement $g$ (or $f$, if $g$ is given instead) is obtained without error.
Therefore, we precisely consider the following inverse geometry problem here:
\begin{problem}\label{prob:exact_problem}
	\text{Find $(\omega, \alpha) \in \Uad \times \Aad$ and $u$ such that \eqref{prob:main_problem} is satisfied.}
\end{problem}
To address Problem~\ref{prob:exact_problem} we define the functional $\JKV$ as
\begin{equation}\label{eq:KV_method}
	\JKV(\omega,\alpha):= \sum_{k=1}^{2}\frac12 \intO{|\nabla (\udk-\unk)|^{2}} + \frac{1}{2} \intG{ \alpha|\udk-\unk|^{2}},
\end{equation} 
and reformulate the overdetermined problem \eqref{eq:main_problem} as the minimization problem:
\begin{problem}\label{prob:min_problem}
	\text{Minimize $\JKV(\omega,\alpha)$ subject to \eqref{eq:state_un} and \eqref{eq:state_ud},}
where $\unk:=\unk(\omega,\alpha)$ and $\udk:=\udk(\omega,\alpha)$ respectively satisfy
\begin{equation}
\label{eq:state_un}
	- \Delta \unk  		= 0		\ \text{in $\Omega$},\qquad
	\dn{\unk}	 		= \gk		\ \text{on $\Sigma$},\qquad
	\dn{\unk} + \alpha \unk = 0	\ \text{on $\Gamma$},
\end{equation}
\begin{equation}
\label{eq:state_ud}
	- \Delta \udk  	= 0		\ \text{in $\Omega$},\qquad
	\udk	 		= \fk		\ \text{on $\Sigma$},\qquad
	\dn{\udk} + \alpha \udk	= 0	\ \text{on $\Gamma$}.
\end{equation}
\end{problem}
Problem~\ref{prob:min_problem} is equivalent to Problem~\ref{prob:main_problem} when, for all $k=1,2$, $\uk = \fk$ and $\dn{\uk} = \gk$ on $\Sigma$ (cf. \cite[Remark 2.6]{AfraitesRabago2025}).

The functional $J$ is known as the Kohn-Vogelius functional \cite{KohnVogelius1987}.
It has been applied in various contexts, including shape detection in electrical impedance tomography \cite{EpplerHarbrecht2005}, transmission problems \cite{AfraitesDambrineKateb2007,AfraitesDambrineKateb2008}, inverse Robin boundary problem \cite{AfraitesRabago2025}, inverse geometry problems in Stokes flow \cite{CaubetDambrineKatebTimimoun2013}, and geometric inverse source problems \cite{AfraitesMasnaouiNachaoui2022}. 
See also \cite{EpplerHarbrecht2012a,BacaniPeichl2013,BenAbdaetal2013} for the application of the method to solving the exterior Bernoulli free boundary problem. 
Notably, this volume-based cost functional often yields more accurate solutions than boundary-integral-based alternatives.

\textit{Paper organization.} The remainder of the paper is organized as follows.
Section~\ref{sec:sensitivity_analysis} discusses the sensitivity of $J$ with respect to the unknown inclusion $\omega$ and the unknown Robin coefficient $\alpha$.
Section~\ref{sec:numerical_experiments} presents a numerical algorithm based on the shape derivative of $J$ with respect to $\omega$ and the Fr\'{e}chet derivative of $J$ with respect to the parameter $\alpha$.
The proposed numerical method is then tested through various numerical examples.

\section{Sensitivity of $J$ respect to $\omega$ and $\alpha$}
\label{sec:sensitivity_analysis} 
In this section, we present the derivative of $\JKV(\omega, \alpha)$ with respect to $\omega$, followed by its derivative with respect to $\alpha$. 
For notational convenience, when one argument is fixed, we omit the functional's dependence on that argument.
\subsection{Shape derivative of $J$ with respect to $\omega$}
\label{sec:shape_derivative}
Let us consider Problem~\ref{prob:min_problem} and suppose that $\alpha \in \Aad$ is fixed.
For ${\varphi},{\psi}\in H^{1}(\Omega)$, we define 
\[
	a(\alpha;{\varphi},{\psi}) = \intO{\nabla \varphi \cdot \nabla \psi} + \intG{ \alpha \varphi \psi}.
\]
The variational formulations of \eqref{eq:state_un} are \eqref{eq:state_ud} are as follows:
\begin{align}
	&\text{Find ${\ud} \in H^{1}(\Omega)$, $\ud|_{\Sigma} = f$, such that $a(\alpha;\ud,\psi) = 0$, $\forall\psi \in {V}(\Omega)$};\label{eq:weak_ud}\\
	&\text{Find ${\un} \in H^{1}(\Omega)$ such that  $a(\alpha;\un,\psi) = \inS{g, \psi}_{\Sigma}$, $\forall\psi \in H^{1}(\Omega)$\label{eq:weak_un}}.
\end{align}
The weak forms \eqref{eq:weak_ud} and \eqref{eq:weak_un} are known to be well-posed if $(f,g) \in H^{1/2}(\Sigma) \times H^{-1/2}(\Sigma)$ and $\Omega$ is Lipschitz regular, as established by the Lax-Milgram lemma (see, e.g., \cite[p. 46]{AfraitesRabago2025}).
Meanwhile, the existence of a minimizer for $\JKV$ can be shown under an appropriate topology, provided that the admissible domain satisfies a \textit{uniform cone property} \cite[Thm. 2.4.7, p. 56]{HenrotPierre2018}. 
While we do not prove this claim here, the arguments used in \cite{RabagoAzegami2019b,AfraitesRabago2024} can be followed to verify it.

To numerically solve Problem~\ref{prob:min_problem}, we will use a shape-gradient-based technique combined with the finite element method (FEM). 
To implement this, we require the expression for the shape derivative of $\JKV$, which has already been derived in \cite[Prop. 3.4]{AfraitesRabago2025}. 
Therefore, we refer the reader to \cite{AfraitesRabago2025} for a detailed characterization of the shape gradient and the additional assumption on the Robin function $\alpha$; see Assumption (A) in \cite[p.~270]{AfraitesRabago2025}.
\begin{proposition}[Shape gradient of $J$]
	\label{prop:shape_gradient}
	Let $D_{\delta}$ be an open set with a ${C}^{\infty}$ boundary, such that $\{ x \in D \mid \text{$d(x,\partial D) > \delta/2$}\} \subset D_{{\delta}} \subset \{ x\in D \mid \text{$d(x,\partial D) > \delta/3$}\}$. 
	Define $\sfTheta$ as the collection of ${C}^{2,1}(\mathbb{R}^{d})^{d}$ vector fields with compact support in $\overline{D}_{\delta}$. 
	Assume that $\Omega \in {C}^{2,1}$ is an admissible domain, i.e., $\Omega = D\setminus\overline{\omega}$ where $\omega \in \Uad$ and ${\VV} \in \sfTheta$.
	Then, the map $t \mapsto \JKV(\omega_{t})$, is ${C}^1$ in a neighborhood of $0$, and its shape derivative at $0$ is given by
	\[
	d\JKV(\omega)[\VV] = \intG{\left[- F(\un,w) + |\nabla w|^{2} + \frac{\alpha}{2} \left( 2 w \dn{w} + \kappa w^{2} \right) \right] \Vn}
		=: \inS{\kernel,\Vn}_{\Gamma},
	\]
	where $w = \ud - \un$, and $F(v,q) = -\nabla_{\Gamma} v \cdot \nabla_{\Gamma} q - \alpha \left( \dn{v} + \kappa v \right)q -\dn \alpha v q$, for $v, q \in H^{2}(\Omega)$.
	Here, $\nabla_{\Gamma}$ denotes the tangential gradient operator while $\kappa$ represents the mean curvature of the free boundary $\Gamma = \partial{\omega}$.
\end{proposition}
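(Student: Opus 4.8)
The plan is to compute the shape derivative of $\JKV$ by the standard chain-rule-plus-adjoint strategy for Kohn--Vogelius functionals, following the steps in \cite{AfraitesRabago2025} to which the statement refers. First I would introduce the velocity field $\VV \in \sfTheta$, the associated family of perturbed domains $\Omega_t = D \setminus \overline{\omega_t}$ with $\omega_t = (\mathrm{id} + t\VV)(\omega)$, and establish that the maps $t \mapsto \unk(\Omega_t)$ and $t \mapsto \udk(\Omega_t)$ (transported back to $\Omega$) are differentiable; this is where the $C^{2,1}$-regularity of $\Omega$, the compact-support assumption on $\VV$ in $\overline{D}_\delta$ (which keeps $\Sigma$ and hence the Cauchy data fixed), and the elliptic-regularity requirement $f \in H^{5/2}(\Sigma)$, $g \in H^{3/2}(\Sigma)$ all enter — they guarantee the states lie in $H^2(\Omega)$ (indeed more) so that boundary traces like $\dn w$ and tangential gradients on $\Gamma$ make sense pointwise. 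I would denote the shape derivatives (local derivatives at fixed points) by $\un' , \ud'$ and record that they satisfy the linearized state problems: $\un'$ and $\ud'$ are harmonic in $\Omega$, $\ud' = 0$ and $\un'$ has zero Neumann trace on $\Sigma$, and on $\Gamma$ the homogeneous Robin condition $\dn u_N + \alpha u_N = 0$ gets differentiated, producing boundary terms involving $\Vn$, the curvature $\kappa$, the tangential derivatives of the state, and $\dn\alpha$ (the normal derivative of the $L^\infty(\mathbb{R}^d)$-extension of $\alpha$).

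Next I would differentiate $\JKV(\omega,\alpha) = \sum_k \tfrac12 a(\alpha; w_k, w_k)$ with $w_k = \udk - \unk$. The derivative splits into a part coming from the explicit dependence of the integration domain $\Omega$ and boundary $\Gamma$ on $t$ — handled by Hadamard's boundary-variation formula, giving the boundary integrand $|\nabla w|^2 \Vn$ together with the surface-divergence term that converts $\alpha |w|^2$ into $\alpha(2 w\dn w + \kappa w^2)\Vn$ after accounting for the Jacobian of the boundary transformation — and a part coming from the material/shape derivatives $w_k' = \ud_k' - \un_k'$, namely $a(\alpha; w_k, w_k')$. The standard trick is to eliminate the implicit term $a(\alpha;w_k,w_k')$: one uses the weak formulations of the state equations tested against $w_k'$ (or against suitable adjoint-type functions), integrates by parts, and invokes the linearized boundary conditions for $\un_k', \ud_k'$, so that all volume contributions cancel and only a boundary integral over $\Gamma$ survives. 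Collecting everything yields the claimed kernel $-F(\un,w) + |\nabla w|^2 + \tfrac{\alpha}{2}(2w\dn w + \kappa w^2)$ with $F(v,q) = -\nabla_\Gamma v\cdot\nabla_\Gamma q - \alpha(\dn v + \kappa v)q - \dn\alpha\, v q$, and the $C^1$-regularity of $t\mapsto \JKV(\omega_t)$ follows from continuity of all the quantities entering this expression.

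The main obstacle I expect is the careful bookkeeping of the boundary terms generated when differentiating the Robin condition on the \emph{moving} boundary $\Gamma$: one must correctly split each state derivative into its shape (local) and material parts, keep track of how $\alpha$ — defined as an $L^\infty$ function on a neighborhood, hence contributing a $\dn\alpha$ term — is transported, and use the tangential Green's identity $\int_\Gamma \nabla_\Gamma v\cdot\nabla_\Gamma q = -\int_\Gamma (\Delta_\Gamma v) q$ together with $\Delta u = \mathrm{div}_\Gamma(\nabla_\Gamma u) + \kappa \dn u + \partial_{nn} u$ to rewrite normal-derivative-of-linearized-state terms purely in terms of the original states $\un$ and $w$. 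This is precisely the computation carried out in \cite[Prop. 3.4]{AfraitesRabago2025}, so rather than reproduce it in full I would cite that reference for the details and simply verify that the energy-gap structure of $\JKV$ here reduces, term by term, to the form stated above; the only genuinely new check is that the cross-term $a(\alpha;w_k,w_k')$ vanishes against the boundary data, which follows exactly as in the Robin-coefficient case treated there.
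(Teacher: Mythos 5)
Your route ultimately coincides with the paper's, which in fact contains no proof of Proposition~\ref{prop:shape_gradient}: it simply defers to \cite[Prop.\ 3.4]{AfraitesRabago2025}, exactly as you propose to do, and your outline (perturbed domains $\omega_t=(\operatorname{id}+t\VV)(\omega)$, differentiability of the transported states, Hadamard's boundary-variation formula, elimination of the state derivatives through the variational formulations and tangential calculus) is the standard computation behind that citation.

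There is, however, one concrete error in your sketch: the closing claim that ``the only genuinely new check is that the cross-term $a(\alpha;w_k,w_k')$ vanishes against the boundary data.'' In the shape-derivative setting this term does \emph{not} vanish, and the stated formula itself rules it out. The Hadamard part of the derivative depends only on $w$ and $\alpha$ and yields the kernel $\tfrac12|\nabla w|^2+\tfrac12\dn{\alpha}\,w^2+\alpha w\,\dn{w}+\tfrac{\kappa\alpha}{2}w^2$, whereas the asserted kernel $\kernel$ carries the \emph{full} $|\nabla w|^2$ and, through $-F(\un,w)$, the cross terms $\nabla_\Gamma\un\cdot\nabla_\Gamma w$, $\alpha(\dn{\un}+\kappa\un)w$ and $\dn{\alpha}\,\un w$; these can only be produced by $a(\alpha;w,w')$. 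The analogy you draw with the Robin-coefficient case (Proposition~\ref{eq:Frechet_derivative}) is misleading: there $\dun,\dud$ solve problems whose right-hand sides are supported on $\Gamma$ and the cross terms drop out after testing with the states, but here $\un'$ and $\ud'$ satisfy \emph{nonhomogeneous} linearized Robin conditions on the moving boundary, with data involving $\Vn$, $\nabla_\Gamma\un$, $\kappa$ and $\dn{\alpha}$, and it is precisely the elimination of $w'$ against these conditions that generates $-F(\un,w)$ and upgrades $\tfrac12|\nabla w|^2$ to $|\nabla w|^2$. As written, your final verification step would fail; the fix is to carry out (or quote in full from \cite{AfraitesRabago2025}) the identification of the boundary terms contributed by $a(\alpha;w,w')$ rather than asserting their cancellation.
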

An important feature of the Kohn-Vogelius functional $\JKV$ is that its gradient does not depend on the derivatives of the states. 
This is a characteristic commonly found in energy-type cost functionals, allowing the gradient to be computed numerically without the need for adjoint variables. 
We highlight that this approach differs from the one presented in \cite{Fang2022}, which requires the use of adjoint variables.
From a numerical standpoint, our method only needs for a regularization functional for $\alpha$, in contrast to \cite{Fang2022}, as the proposed objective functional inherently includes a smoothing effect. 
This enhances stability and provides a more accurate approximation of the minimizer, offering a solution to the inverse problem of simultaneously recovering $\alpha$ and $\Gamma$.
\subsection{Derivative of $J$ with respect to $\alpha$}
\label{functional_KV_respect_alpha}
Let $\Omega$ be a given admissible domain.
For simplicity, we denote the derivative of $\ui$ with respect to $\alpha$ by $\dui = \dalp{\ui}$.
Hereinafter, without further notice, we assume $i \in \{D, N\}$, indicating the solutions to equations \eqref{eq:state_ud} and \eqref{eq:state_un}.
Given $\alpha, \rho \in \Aad$, there exists a real number $\varepsilon=\varepsilon(\alpha,\rho) > 0$ such that $\alpe=\alpha+\varepsilon \rho \in \Aad$.
Consequently, $\alpe$ is a perturbation of $\alpha$ in the direction $\rho$.
\begin{proposition}[{\cite{ChaabaneJaoua1999}}] \label{eq:Frechet_derivative}
	Let $\Omega$ be a given admissible domain and $\alpha, \rho \in \Aad$.
	There exist $\dui, {O}(\varepsilon^{2}) \in H^{1}(\Omega)$  such that
\begin{equation}\label{eq:u_perturbed}
    \uie=\ui+\varepsilon {\dui}+{O}(\varepsilon^{2})
    \quad\text{and}\quad 
    \lim_{\varepsilon \searrow 0}{O}(\varepsilon^{2})=0,
\end{equation} 
where $\dui$ solves the variational equation
\begin{equation}\label{eq:perturbed_variational}
	a(\dui, \psi) = -\intG{\rho {u_i} \psi},
\end{equation}
for all $\psi \in H^{1}(\Omega)$ when $i=N$ and for all $\psi \in V(\Omega)$ when $i=D$.
Additionally, $J$ is Gateaux differentiable at $\alpha$ in the $\rho$ direction, and its derivative is given by
\[
	d\JKV(\alpha)\rho =\frac{1}{2}\intG{ \rho (\ud^{2}-\un^{2})}.
\]
\end{proposition}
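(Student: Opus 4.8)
The plan is to differentiate the cost functional $\JKV(\alpha) = \sum_{k}\frac12 a(\alpha;\udk-\unk,\udk-\unk)$ directly by expanding each term to first order in $\varepsilon$, using the stated asymptotic expansion $\uie = \ui + \varepsilon\dui + O(\varepsilon^2)$ and the variational characterization of $\dui$. For clarity I will suppress the index $k$ and write $w = \ud-\un$, $\dwdn = \dud-\dun$.

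First I would derive the variational equation \eqref{eq:perturbed_variational} for $\dui$. Writing the weak form of \eqref{eq:state_ud}, \eqref{eq:state_un} with $\alpha$ replaced by $\alpe=\alpha+\varepsilon\rho$, i.e. $a(\alpe;\uie,\psi) = \inS{g,\psi}_\Sigma$ (or $=0$ for $i=D$), and subtracting the unperturbed equation, one gets
\begin{equation*}
a(\alpha;\uie-\ui,\psi) + \varepsilon\intG{\rho\,\uie\psi} = 0.
\end{equation*}
Substituting $\uie-\ui = \varepsilon\dui + O(\varepsilon^2)$ and $\uie = \ui + O(\varepsilon)$, dividing by $\varepsilon$ and letting $\varepsilon\searrow 0$ yields $a(\alpha;\dui,\psi) = -\intG{\rho\,\ui\psi}$, which is \eqref{eq:perturbed_variational}. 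The admissible test-space caveat (all of $H^1(\Omega)$ for $i=N$, only $V(\Omega)$ for $i=D$) is inherited from the boundary conditions; note $\dud|_\Sigma = 0$ since $\fk$ does not depend on $\alpha$.

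Next I would compute the derivative of $\JKV$. Since $a(\alpha;\cdot,\cdot)$ depends on $\alpha$ both through its coefficient on $\Gamma$ and through the arguments $\udk,\unk$, expanding $\JKV(\alpe)$ gives, after collecting the $O(\varepsilon)$ terms,
\begin{equation*}
d\JKV(\alpha)\rho = \sum_{k=1}^{2}\left[ a(\alpha; w, \dwdn) + \frac12 \intG{\rho\, w^2} \right],
\end{equation*}
where the first term comes from differentiating the arguments and the second from differentiating the explicit $\alpha$ in the bilinear form. Now I would use $a(\alpha;w,\dwdn) = a(\alpha;\ud,\dwdn) - a(\alpha;\un,\dwdn)$ and test the state equations \eqref{eq:weak_ud}, \eqref{eq:weak_un} with $\psi = \dwdn$: since $\dwdn = \dud - \dun$ with $\dud|_\Sigma = 0$, we have $\dwdn \in V(\Omega)$, so $a(\alpha;\ud,\dwdn) = 0$ and $a(\alpha;\un,\dwdn) = \inS{g,\dwdn}_\Sigma$. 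Hence $a(\alpha;w,\dwdn) = -\inS{g,\dwdn}_\Sigma$. It remains to evaluate this boundary term: testing \eqref{eq:perturbed_variational} with $\psi = w$ for $i = N$ and $i = D$ and subtracting gives $a(\alpha;\dwdn,w) = -\intG{\rho\,(\un-\ud)w} = \intG{\rho\,w^2}$ (using $w = \ud-\un$), while symmetry of $a$ and the same decomposition gives $a(\alpha;\dwdn,w) = a(\alpha;\dun,w) - a(\alpha;\dud,w)$; testing the state equations for $\ud,\un$ with $\dun$ and $\dud$ respectively, and using that $w|_\Sigma = f - \un|_\Sigma$ is not zero so this route needs the equation $a(\alpha;\dun,w)$ directly — more cleanly, symmetry gives $a(\alpha;w,\dwdn) = a(\alpha;\dwdn,w) = \intG{\rho\,w^2}$ is wrong in sign, so instead I combine: from $a(\alpha;w,\dwdn) = -\inS{g,\dwdn}_\Sigma$ and, by symmetry of $a$, $a(\alpha;\dwdn, w) = a(\alpha;w,\dwdn)$; separately testing \eqref{eq:perturbed_variational} with $w$ gives $a(\alpha;\dun,w) + \intG{\rho\,\un w} = 0$ and $a(\alpha;\dud,w) + \intG{\rho\,\ud w} = 0$ with $w \in H^1$ allowed for $i=N$ but $w\notin V(\Omega)$ for $i=D$, so the $i=D$ identity is not directly available. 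The correct bookkeeping, which I would carry out carefully, is to test the $i=N$ relation with $w$ (legal) and the $i=D$ relation with $\un$ and use $a(\alpha;\dud,\un)=a(\alpha;\un,\dud)$, etc. After assembling all cross terms the $a(\alpha;w,\dwdn)$ contribution reduces to $-\frac12\intG{\rho\,w^2}$ plus a term that combines with the explicit $\frac12\intG{\rho\,w^2}$; collecting everything and simplifying $-(\un-\ud)w = \ud^2 - \un^2 - \ud w + \ldots$ with $w = \ud - \un$ yields the stated formula $d\JKV(\alpha)\rho = \frac12\sum_k \intG{\rho\,(\udk^2 - \unk^2)}$.

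The main obstacle is the asymmetry of the test spaces: $\dud$ lives only in the affine space with zero trace on $\Sigma$, so the naive symmetric manipulation $a(\alpha;w,\dwdn) = a(\alpha;\dwdn,w)$ with $w$ as a test function is not admissible for the Dirichlet solver. The careful step is therefore to route every pairing through a legal test function — always testing the $i=D$ equation with a function vanishing on $\Sigma$, and testing the $i=N$ equation with anything in $H^1$ — and to keep track of the surface integrals $\inS{g,\cdot}_\Sigma$ that appear and cancel. I also need to justify passing to the limit $\varepsilon\searrow 0$ in the difference quotient, which follows from the $H^1$-convergence $O(\varepsilon^2)/\varepsilon \to 0$ in $H^1(\Omega)$ together with continuity of $a(\alpha;\cdot,\cdot)$ and of the trace operator $H^1(\Omega)\to L^2(\Gamma)$; this is the routine part. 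Everything else is a finite bookkeeping computation with the bilinear form.
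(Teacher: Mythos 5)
Your derivation of \eqref{eq:perturbed_variational} and of the intermediate identity $d\JKV(\alpha)\rho = a(\alpha;\wdn,\dwdn) + \tfrac12\intG{\rho\,\wdn^{2}}$ (with $\wdn=\ud-\un$, $\dwdn=\dud-\dun$) coincides with the paper's argument, but the elimination of $\dwdn$ --- the decisive step --- is wrong where it is explicit and absent where it matters. The claim $\dwdn\in V(\Omega)$ is false: only $\dud$ has zero trace on $\Sigma$ (since $\ud|_{\Sigma}=f$ is independent of $\alpha$), whereas $\dun$ solves a Robin--Neumann problem and its trace on $\Sigma$ does not vanish. Consequently you are not allowed to test \eqref{eq:weak_ud} with $\psi=\dwdn$; in fact integration by parts gives $a(\alpha;\ud,\psi)=\inS{\dn{\ud},\psi}_{\Sigma}$ for arbitrary $\psi\in H^{1}(\Omega)$, so $a(\alpha;\ud,\dwdn)\neq 0$ and your identity $a(\alpha;\wdn,\dwdn)=-\inS{g,\dwdn}_{\Sigma}$ is not merely unjustified but false. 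The later suggestion to ``test the $i=D$ relation with $\un$'' fails for the same reason, since $\un\notin V(\Omega)$. After these missteps (and the self-acknowledged sign confusion) the proposal only promises that ``careful bookkeeping'' will yield the formula; the cancellation is never actually exhibited, so the proof is incomplete as written.

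The repair is short and is exactly the paper's route: expand
$a(\alpha;\dwdn,\wdn)=a(\alpha;\dud,\ud)-a(\alpha;\dud,\un)-a(\alpha;\dun,\wdn)$
and evaluate each term with an \emph{admissible} test function. Equation \eqref{eq:perturbed_variational} for $i=N$ with $\psi=\wdn\in H^{1}(\Omega)$ gives $-a(\alpha;\dun,\wdn)=\intG{\rho\,\un(\ud-\un)}$; equation \eqref{eq:weak_ud} with $\psi=\dud\in V(\Omega)$ gives $a(\alpha;\ud,\dud)=0$; and equation \eqref{eq:weak_un} with $\psi=\dud$, together with $\dud|_{\Sigma}=0$, gives $a(\alpha;\dud,\un)=a(\alpha;\un,\dud)=\inS{g,\dud}_{\Sigma}=0$. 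Hence $d\JKV(\alpha)\rho=\intG{\rho\,(\un\ud-\un^{2})}+\tfrac12\intG{\rho\,(\ud-\un)^{2}}=\tfrac12\intG{\rho\,(\ud^{2}-\un^{2})}$, which is the stated formula; note that every pairing involving the Dirichlet state or its derivative is routed through the test function $\dud\in V(\Omega)$, never through $\wdn$ or $\dwdn$.
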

\begin{proof}
Let $\Omega$ be a given admissible domain and $\alpha, \rho \in \Aad$.
The differentiability of the states and the cost functional with respect to $\alpha$ can be shown by standard arguments, so we omit it.
Let us first establish the structure of the derivatives of the states with respect to $\alpha$.
To do this, we let $\varepsilon=\varepsilon(\alpha,\rho) > 0$ be such that $\alpe=\alpha+\varepsilon \rho \in \Aad$ and consider the following perturbed state problems:
\begin{align}
    a(\alpe;\une, \psi) &= \intS{g\psi}, \quad \forall \psi \in H^{1}(\Omega),\label{eq:perturb_state_un}\\
    a(\alpe;\ude, \psi) &= 0, \quad \forall \psi \in {V}(\Omega).\label{eq:perturb_state_ud}
\end{align} 
Note that $g$ represents an observation on the boundary $\Sigma$. 
Since the perturbation of $\alpha$ does not affect $g$, we have $g^{\varepsilon} = g$ on $\Sigma$.

Subtracting \eqref{eq:state_un} from \eqref{eq:perturb_state_un}, we obtain 
\begin{equation}\label{eq:difference_un}
	a(\alpha;\une-\un, \psi) = -\intG{\varepsilon \rho \un \psi}, \qquad \forall \psi \in H^{1}(\Omega).
\end{equation}
For any $\varepsilon > 0$, equation \eqref{eq:difference_un} has a unique weak solution $\une - \un \in H^{1}(\Omega)$.  
By substituting $\psi = \une - \un$ into the above equation, we deduce that $\norm{\une - \un}_{H^1(\Omega)} = {O}(\varepsilon)$, which suffices for computing $d\JKV(\alpha)\rho$.  
Additionally, the strong convergence of $\left\{\frac{1}{\varepsilon}(\une - \un)\right\}$ in $H^1(\Omega)$ can be established, though we omit the proof here.  

To proceed, we divide equation \eqref{eq:difference_un} by $\varepsilon$ and take the limit as $\varepsilon \to 0$. Utilizing \eqref{eq:u_perturbed}, we deduce that $\dun \in H^{1}(\Omega)$ satisfies the weak formulation
\[
	a(\alpha; \dun, \psi) = -\intG{\rho \un \psi}, \quad \forall \psi \in H^1(\Omega).
\]  
Similarly, by applying the same approach, we find that $\dud \in V(\Omega)$ satisfies 
\[
	a(\alpha; \dud, \psi) = -\intG{\rho \ud \psi}, \quad \forall \psi \in V(\Omega).
\]

Let us denote $\wdn = \ud-\un$ and $\wdne = \ude-\une$.
Now, the derivative of $\JKV(\alpha)$ with respect to $\alpha$ in the direction $\rho$ is givem by
\begin{align*}
	d\JKV(\alpha)\rho
	=\lim_{\varepsilon \searrow 0} \frac{\JKV(\alpha + \varepsilon \rho)-\JKV(\alpha)}{\varepsilon}
	=\lim_{\varepsilon \searrow 0}\frac{1}{2\varepsilon} \left[ a(\alpe; \wdne, \wdne) - a(\alpha;\wdn,\wdn) \right].
\end{align*}
Observe that the expression on the right contains the difference ${(\uie-\ui)}/{\varepsilon} = {\dui}+{O}(\varepsilon)$.
Hence, evaluating the limit leads to the expression
\begin{align*}
	d\JKV(\alpha)\rho 
	&=  a(\alpha; \dwdn, \wdn) + \frac12\intG{ \rho |\wdn|^{2} }\\
	&=  a(\alpha; \dud, \ud) - a(\alpha; \dud, \un)  - a(\alpha; \dun, \wdn) + \frac12\intG{ \rho |\wdn|^{2} }.
\end{align*}
We eliminate $\dwdn \in H^{1}(\Omega)$ appearing on the above expression.
To do this, first, we consider \eqref{eq:perturbed_variational} for $i=N$ and choose $\psi = \wdn = \ud - \un \in H^{1}(\Omega)$ to obtain
\[
	-a(\alpha; \dun, \wdn) = \intG{ \rho \un (\ud - \un) }.
\]
Next, we consider \eqref{eq:weak_ud} and take $\psi = \dud \in V(\Omega) \subset H^{1}(\Omega)$ to get
\[
	a(\alpha; \ud, \dud) = 0.
\]
Finally, we consider \eqref{eq:weak_un} and choose $\psi = \dud \in V(\Omega) \subset H^{1}(\Omega)$ (i.e., $\dud|_{\Sigma} = 0$) to obtain
\[
	a(\alpha; \dud, \un) = a(\alpha; \un, \dud) =  \inS{g, \dud}_{\Sigma} = 0.
\]
Thus, we arrive at the following simplication (i.e., eliminating $\dwdn$)
\begin{align*}
	d\JKV(\alpha)\rho  
	&= \intG{ ( \rho \un \ud - \rho \un^{2} ) } + \frac12\intG{ \rho (\ud^{2} - 2\ud\un + \un^{2})}\\ 
	&= \frac12\intG{ \rho (\ud^{2} - \un^{2})}.
\end{align*}
That is, $d\JKV(\alpha)=\frac12{(\ud^{2} - \un^{2})}$. This proved the proposition.
\end{proof}
\section{Numerical algorithm and examples} 
\label{sec:numerical_experiments}
\subsection{Numerical algorithm}
\label{subsec:Numerical_Algorithm}
Our numerical approach is built upon the gradient informations derived in the previous section, incorporating techniques from earlier works (see, e.g., \cite{AfraitesRabago2024,AfraitesRabago2025}) that implement a Sobolev-gradient-based algorithm in a finite element setting. 
Below, we provide the key details of our numerical method.

\textit{Choice of domain deformation vector.} 
The choice $\VV = V_{n}\nn = -\kernel\nn$, where $\kernel \in L^{2}(\Gamma)$ and $\kernel \not\equiv 0$, provides a descent direction for the cost function $\JKV$.
However, if the data $(f, g)$ lack sufficient smoothness, the shape gradient's surface expression given in Proposition~\ref{prop:shape_gradient} may not exist, and the descent vector $\VV$ could have poor regularity.
Specifically, the $L^{2}(\Gamma)$ regularity of the shape gradient $\kernel$ is insufficient to achieve a stable approximation of the unknown boundary. 
To address this issue, we employ the Riesz representation of the shape gradient, a well-established technique in the literature (see, e.g., \cite{Doganetal2007,Azegami1994}).
To ensure a smooth descent direction for $\JKV$, we determine a vector $\VV \in H_{\Sigma,0}^1(\Omega)^{d}$ that solves the variational equation:
\[
	\intO{( \nabla \VV : \nabla {\varphi} + \VV \cdot {\varphi} ) } 
		= - \intG{\kernel\nn \cdot {\varphi}}, \quad \forall{\varphi} \in H_{\Sigma,0}^1(\Omega)^{d}.
\]
This yields a \textit{Sobolev gradient} \cite{Neuberger1997,Azegami2020} representation $\VV$ of $-\kernel\nn$ over $\Omega$.

%
\textit{Step-size computation and stopping condition.}
The $k$th step size $t^{k}$ is computed at each iteration using a backtracking line search with the formula  
\[
t^{k} = \mu \frac{\JKV(\omega^{k},\alpha^{k})}{\norm{\VV^{k}}^{2}_{{H}^{1}(D\setminus\overline{\omega}^{k})^{d}}},
\]  
where $\mu > 0$ is a scaling factor; see \cite[p. 281]{RabagoAzegami2020}. 
The value of $\mu$ is adjusted to prevent the formation of inverted triangles in the mesh after the update.   
Meanwhile, $\varepsilon^{k}$ is determined using the standard Armijo line search procedure.
The algorithm terminates after a finite number of iterations.  
For precise measurements, we set the maximum number of iterations to $500$, and for noisy data, we reduced it to $200$.

\textit{Choice of regularization functional.}
A common regularization approach involves adding specific terms to the minimization process, particularly for addressing ill-posed systems \cite{Rundell2008,Fang2022}.
In our experiments, we employ a single regularization term based on the PDE formulation, alongside the previously mentioned extension-regularization technique. 
This approach is independent of the parametrization of $(\Gamma, \alpha)$ or the choice of numerical solver, simplifying the process by reducing the need for tuning multiple parameters.
One possible regularization functional is given by:
\begin{equation}\label{eq:regularization_functional}
	\frac{\tau_{1}}{2} P(\Gamma) + \frac{\tau_{2}}{2} R(\alpha) 
	= \frac{\tau_{1}}{2} \int_{\Gamma} 1 \, ds + \frac{\tau_{2}}{2} \int_{\Gamma} {\alpha}^{2} \, ds,
\end{equation}
where $\tau_{1}, \tau_{2} > 0$ are constants. 
However, we omit the perimeter term and rely solely on Tikhonov regularization $R$ for $\alpha$. 
As demonstrated in our examples, this choice is sufficient for achieving reasonable reconstruction, even in the presence of noisy data.

\textit{Choice of regularization parameter.}
When reconstructing noisy data, choosing the regularization parameter $\tau = \tau_{2}$ in \eqref{eq:regularization_functional} is crucial. 
This parameter is typically selected using the discrepancy principle, which requires accurate knowledge of the noise level. However, in many cases, the noise level is either unknown or unreliable, and an incorrect estimate can significantly degrade reconstruction accuracy.
To overcome this, heuristic rules for parameter selection are needed, especially when noise level information is unavailable or unreliable. 
We propose a heuristic rule based on the balancing principle \cite{ClasonJinKunisch2010b}, which selects $\tau > 0$ such that
\begin{equation}\label{eq:balancing_principle}
    (\beta - 1) \JKV(\omega, \alpha) - \frac{\tau}{2} {R}(\mu) 
    := (\beta - 1) \frac{1}{2} a(\alpha;w,w) - \frac{\tau}{2} \norm{\alpha}^{2}_{L^{2}(\Gamma)^{2}} = 0,
\end{equation}
for a constant $\beta > 1$. 
This balances the data-fitting term $\JKV(\omega, \alpha)$ with the penalty term ${R}(\alpha)$, where $\beta$ controls the trade-off. 
This rule does not rely on noise level information and has been successfully applied to both linear and nonlinear inverse problems \cite{ClasonJinKunisch2010,ClasonJinKunisch2010b,Clason2012,ClasonJin2012,ItoJinTakeuchi2011}, and recently in \cite{Meftahi2021,MachidaNotsuRabago2024}.

\textit{Algorithm.} 
The $k$th pair approximation $(\omega^{k}, \alpha^{k})$ of the exact solution pair $(\omega^{\ast},\alpha^{\ast})$ is computed as follows:
\begin{algorithm}
\caption{Simultaneous recovery of $\Gamma$ and $\alpha$}
\begin{algorithmic}[1]
\STATE \textbf{Initialization:} Fix $D \subset \mathbb{R}^{d}$, $\tau, \beta > 0$, and choose an initial shape $\omega^{0}$ and Robin coefficient $\alpha^{0}$.
\FOR{$k = 0, 1, 2, \ldots$}
    \STATE Solve the two-state problems on the $D\setminus\overline{\omega}^{k}$ with Robin coefficient $\alpha^{k}$.
    \STATE Choose $t^{k} > 0$ and compute the vector $\VV^{k}$ in $D\setminus\overline{\omega}^{k}$.
    \STATE Update the inclusion: $\omega^{k+1} = (\operatorname{id} + t^{k} \VV^{k}) \omega^{k}$.
    \STATE Compute $\tau$ via the balancing principle \eqref{eq:balancing_principle}.
    \STATE Choose $\varepsilon^{k} > 0$ and update the Robin coefficient: $ \alpha^{k+1} = \alpha^{k} - \varepsilon^{k} d \JKV(\alpha) + \varepsilon^{k} \tau \alpha^{k}$.
\ENDFOR 
\end{algorithmic}
\end{algorithm}
\subsection{Numerical examples}
\label{subsec:Numerical_Examples_{2}D} 
For the numerical examples, we make the following broad assumptions:  
The specimen under examination has a circular shape with a unit radius, centered at the origin; i.e., $D = B(0,1)$. 
For $(x_{1}, x_{2}) \in \Gamma^{\ast}$, we consider three cases for the exact Robin coefficient:
\begin{description}
	\item[Case A1:] $\alpha_{1}^{\ast} = \operatorname{exp}{(x_{1} x_{2})}$;
	\item[Case A2:] $\alpha_{2}^{\ast} = 1 + 0.5 x_{1} x_{2}$;
	\item[Case A3:] $\alpha_{3}^{\ast} = 1 + 0.5 \sin(\pi{x_{1}}) \sin(\pi{x_{2}})$.
\end{description}
The data is generated synthetically. 
Specifically, we impose the Dirichlet boundary conditions $f_{1}(t) = \cos(t)$ and $f_{2}(t) = \sin(t)$, $t \in [0, 2\pi]$, and set $g_{1} = \dn{u_{1}^{\ast}} |_{\Sigma}$ and $g_{2} = \dn{u_{1}^{\ast}} |_{\Sigma}$, respectively.

To prevent ``inverse crimes" (see \cite[p. 154]{ColtonKress2013}), we use different numerical methods for generating synthetic data and performing the inversion. 
For the forward problem, the domain is discretized using 300 nodal points on both the exterior and interior boundaries, with ${P}_{2}$ finite element basis functions implemented in {\sc FreeFem++} \cite{Hecht2012}. 
In contrast, the inversion process uses a uniform mesh with $150$ nodes on each boundary, a mesh size of $h = 0.05$, and ${P}_{1}$ finite elements to solve all variational problems.

We will evaluate the proposed identification procedure by considering the following geometries for the unknown boundary $\Gamma$:
\begin{description}
	\item[Case B1:] $\Gamma^{\ast}_{1} = \left\{\begin{pmatrix} 0.1 + 0.7\cos{t}\\ 0.2 + 0.5\sin{2t}\end{pmatrix}, \forall t \in [0, 2\pi) \right\}$;
	\item[Case B2:] $\Gamma^{\ast}_{2} = \left\{\begin{pmatrix} 0.6 \cos{t} \\ 0.5 \sin{t} (1.8 + \cos{2t}) \end{pmatrix}, \forall t \in [0, 2\pi) \right\}$;
	\item[Case B3:] $\Gamma^{\ast}_{3} 
		= \left\{\begin{pmatrix} -0.25 + \displaystyle\frac{0.6+0.54\cos{t}+0.06\sin{2t}}{1+0.75\cos{t}}\cos{t} \\[0.75em] 0.05 + \displaystyle\frac{0.6+0.54\cos{t}+0.06\sin{2t}}{1+0.75\cos{t}}\sin{t}\end{pmatrix}, \forall t \in [0, 2\pi) \right\}$.
\end{description}
In all cases, the algorithm starts with $\alpha^{0} = 1$ and an initial guess of $\Gamma^{0} = C(\vect{0}, r_{0})$, where $r_{0} \in \{0.5, 0.55, 0.6\}$, depending on the case. 
It is terminated after $500$ iterations in all cases. 
Although the algorithm could run longer and the stopping criterion could be improved, this simple approach already provides satisfactory results based on our experience.

We also test the algorithm on noisy data by perturbing the exact measurements with Gaussian noise of mean zero and standard deviation $0.5$, where the noise level is controlled by the parameter $\delta$ (noise weight).
Due to the severely ill-posed nature of the problem, we limit our analysis to a noise level of $\delta = 0.005$.
\subsection{Numerical results and discussion}
In this subsection, we discuss the results of our numerical experiments. 
Table~\ref{tab:shapes} shows the reconstructed shapes for Cases A1--A3 combined with Cases B1--B3. 
Each row corresponds to a specific case, comparing the shapes obtained from exact and noisy data against the true inclusion. 
The outermost thick black line indicates the object's surface, while the inner black line shows the exact geometry of the unknown boundary. 
The black dashed lines represent the initial guesses. 
The blue line with ``$\times$'' markers and the red dashed line with ``$\bigcirc$'' markers depict the reconstructions under exact and noisy ($\delta = 0.005$) measurements, respectively.
We observe that the method achieves reasonable reconstructions for test problems, especially without noise; see Tables~\ref{tab:robin_30} and \ref{tab:robin_135}. 
Even for complex shapes with concavities, the unknown boundary is reconstructed reliably despite $0.5\%$ noise. 
However, the Robin coefficient reconstruction is poor due to the ill-posedness of the problem; see Tables~\ref{tab:noisy_robin_30} and \ref{tab:noisy_robin_135}. 
To address this issue, multiple measurements (more than two) are needed to obtain reasonable reconstruction of both the shape and the Robin coefficient; see, e.g., \cite{Harris2023}.
Figure~\ref{fig:summary_A3B3} shows the histories of cost values, gradient norms, and balancing parameter $\tau$. 
We see that the cost values are lower (as expected) under exact measurements and stabilize after several iterations for both exact and noisy data. 
Gradient norms also decrease significantly despite some oscillations.

Overall, the method is effective under exact measurements, even for inclusions with concavities. 
Additional tests with varied initial geometries and inclusions (not shown) confirm consistent performance with exact data. 
For noisy data, using multiple measurements is essential for reasonable reconstructions. 
Improving noisy reconstructions will be the focus of future work.
\begin{table}[htp!]
\centering
\begin{adjustbox}{max width=\textwidth}
\begin{tabular}{|c|c|c|c|}
\hline
\textbf{Case} & \textbf{A1} & \textbf{A2} & \textbf{A3} \\ \hline
\textbf{B1} & \resizebox{0.325\linewidth}{!}{\includegraphics{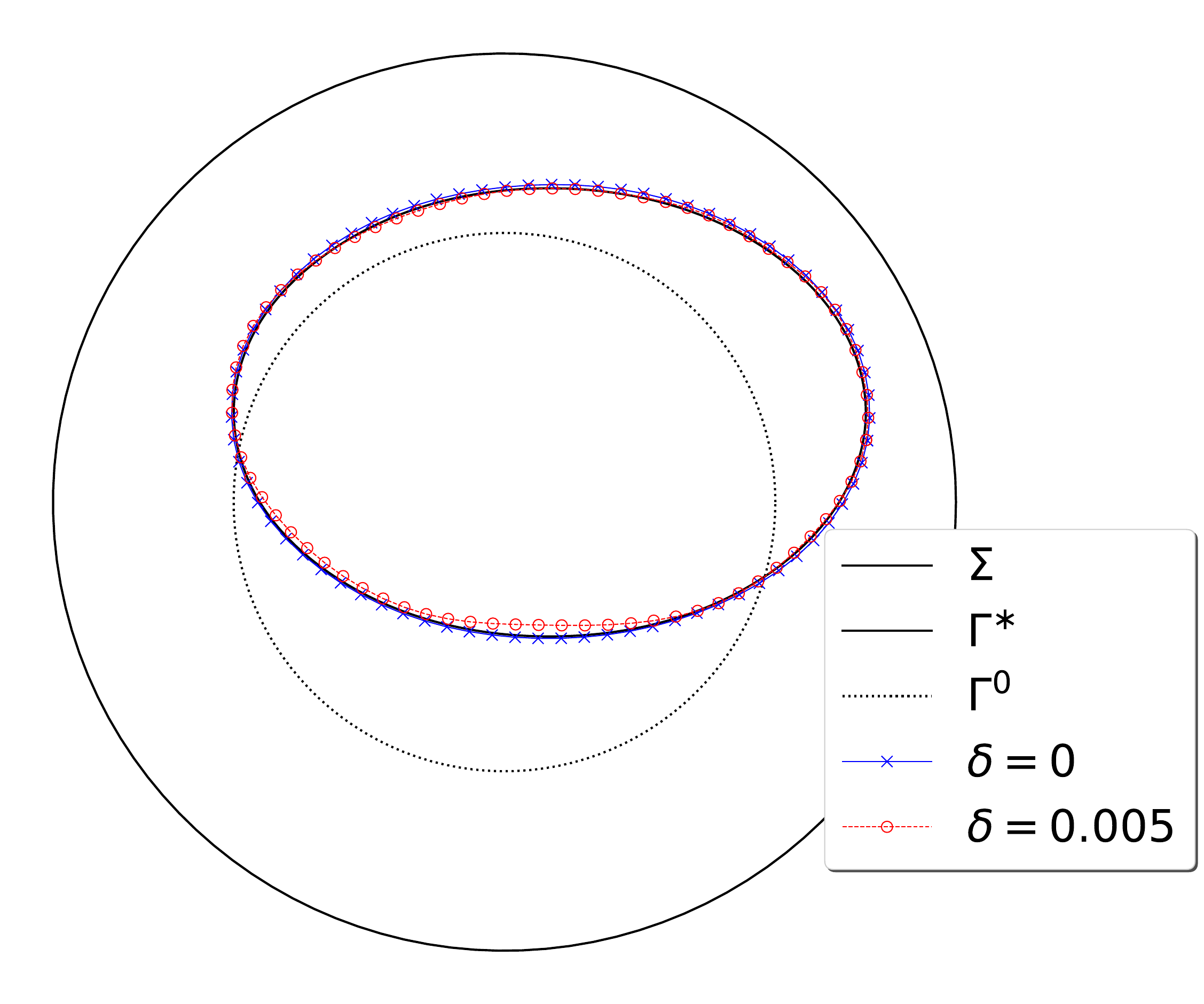}} &
	\resizebox{0.325\linewidth}{!}{\includegraphics{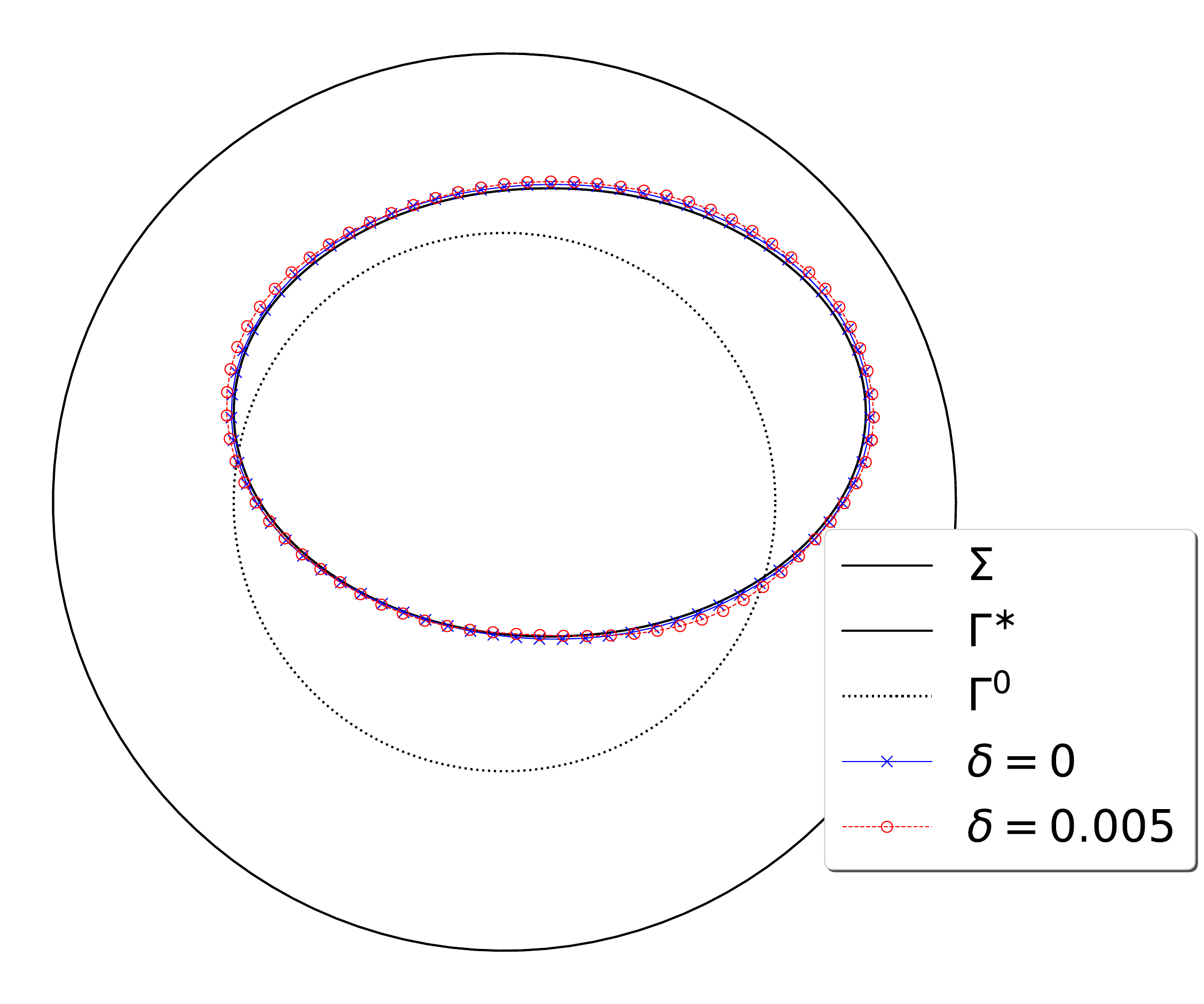}} &
	\resizebox{0.325\linewidth}{!}{\includegraphics{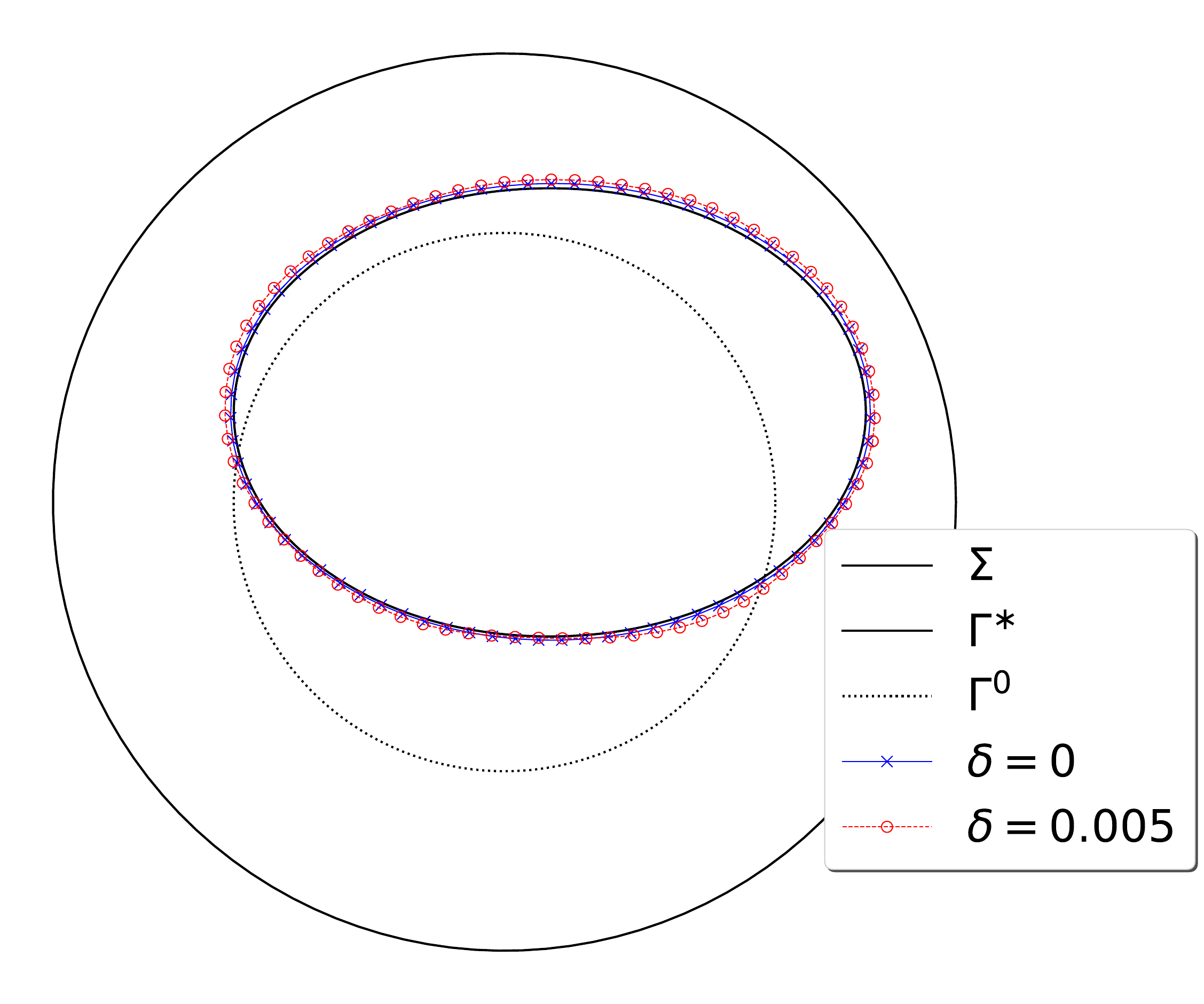}} \\ \hline
\textbf{B2} & \resizebox{0.325\linewidth}{!}{\includegraphics{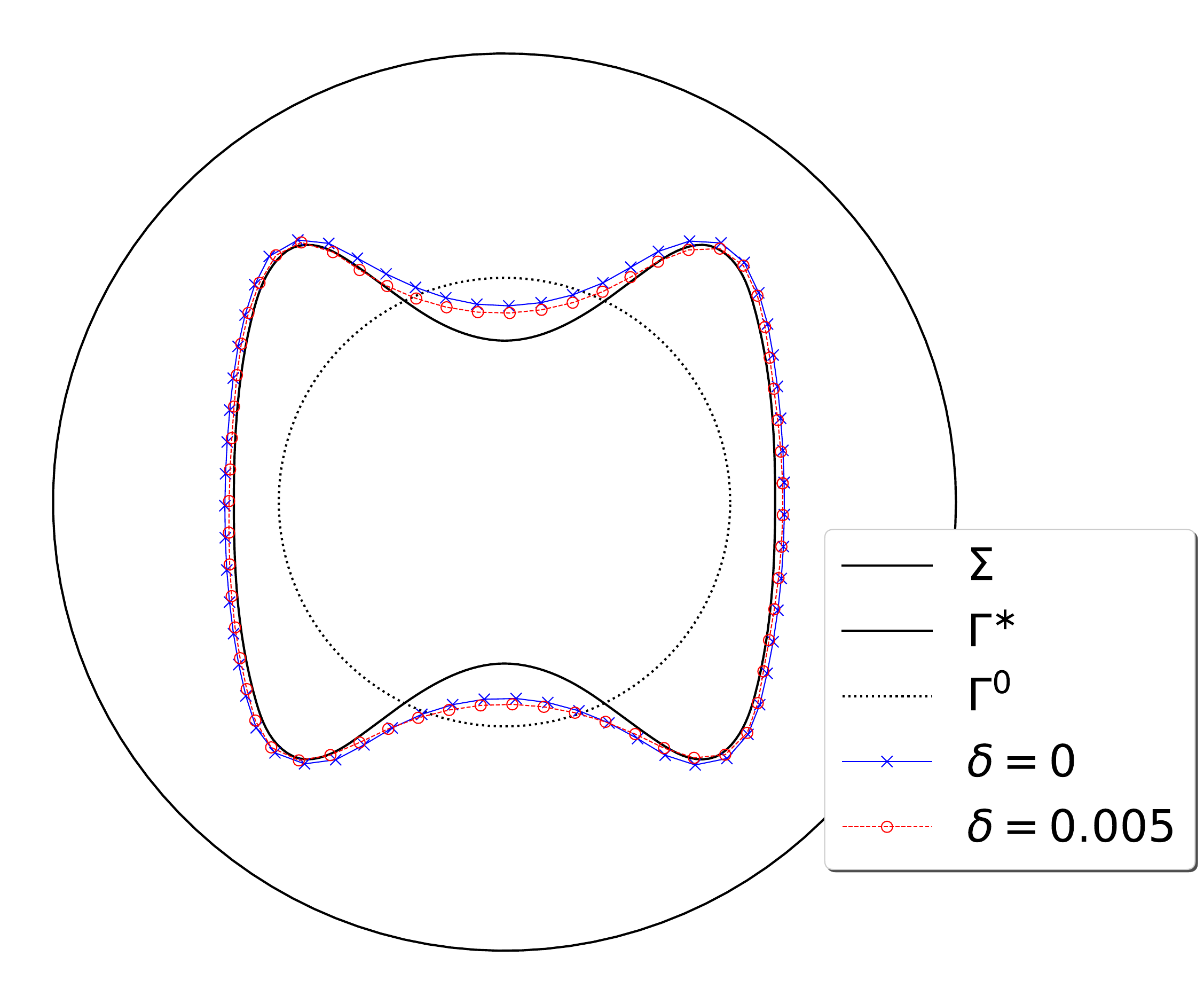}} &
	\resizebox{0.325\linewidth}{!}{\includegraphics{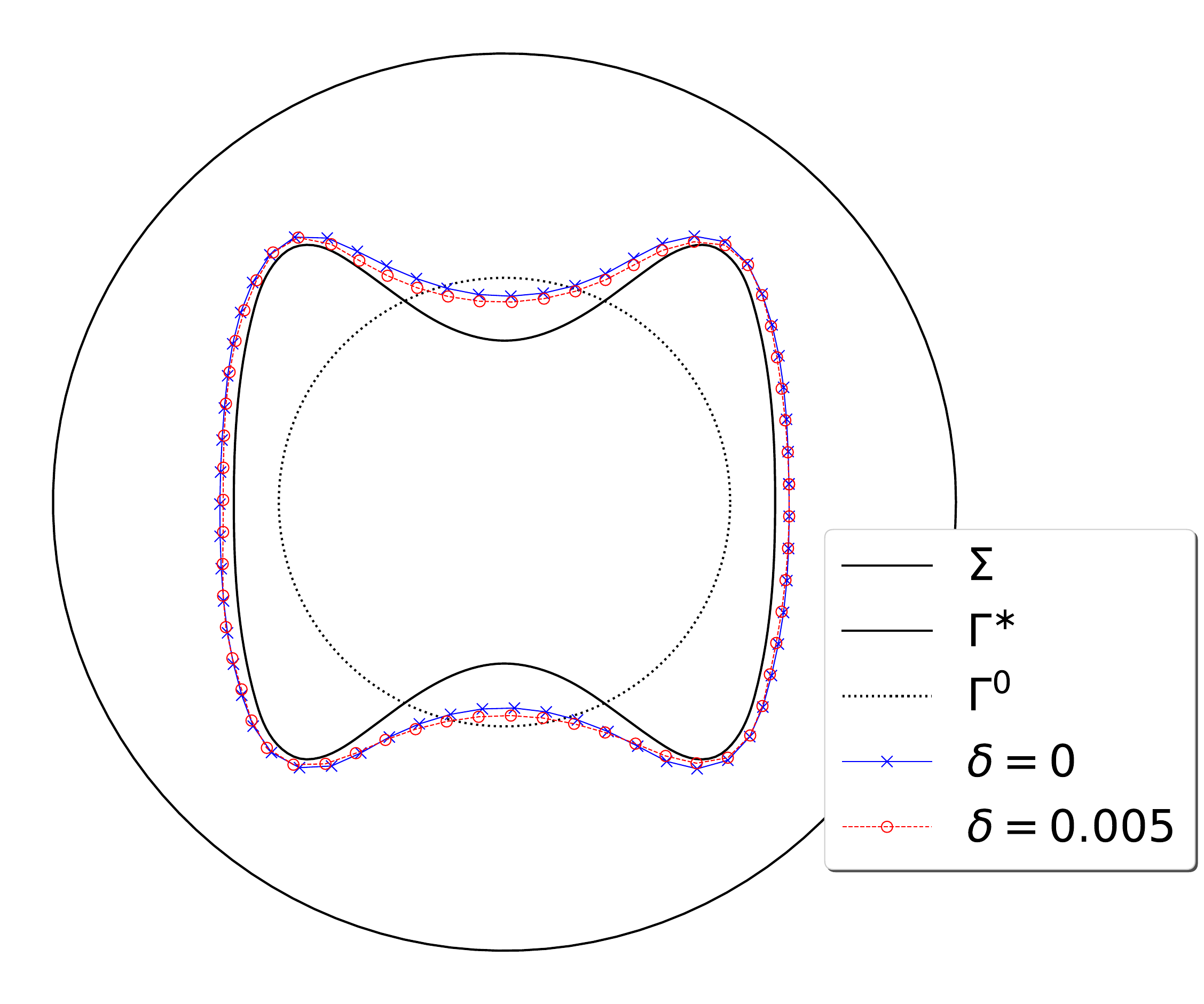}} &
	\resizebox{0.325\linewidth}{!}{\includegraphics{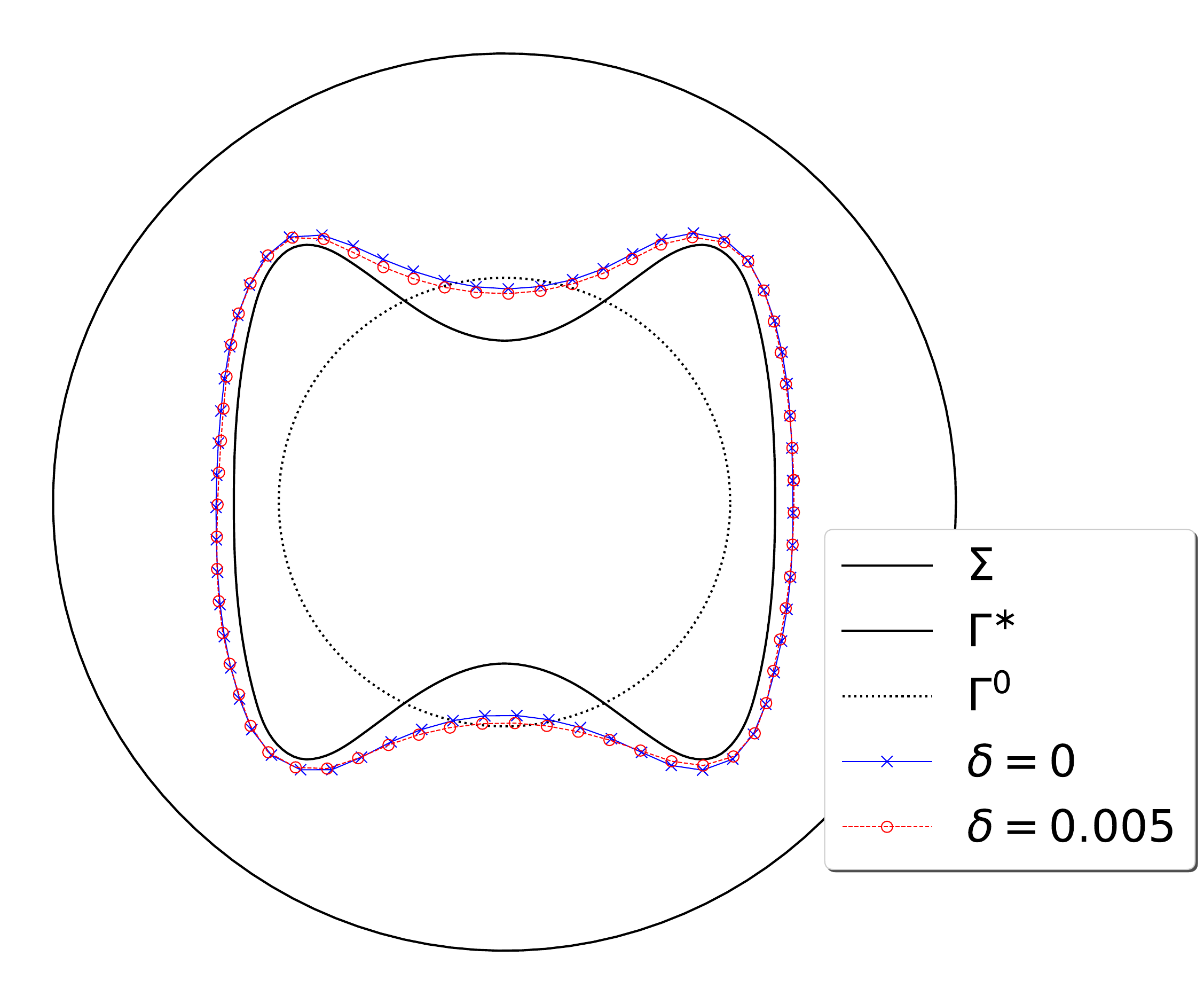}} \\ \hline
\textbf{B3} & \resizebox{0.325\linewidth}{!}{\includegraphics{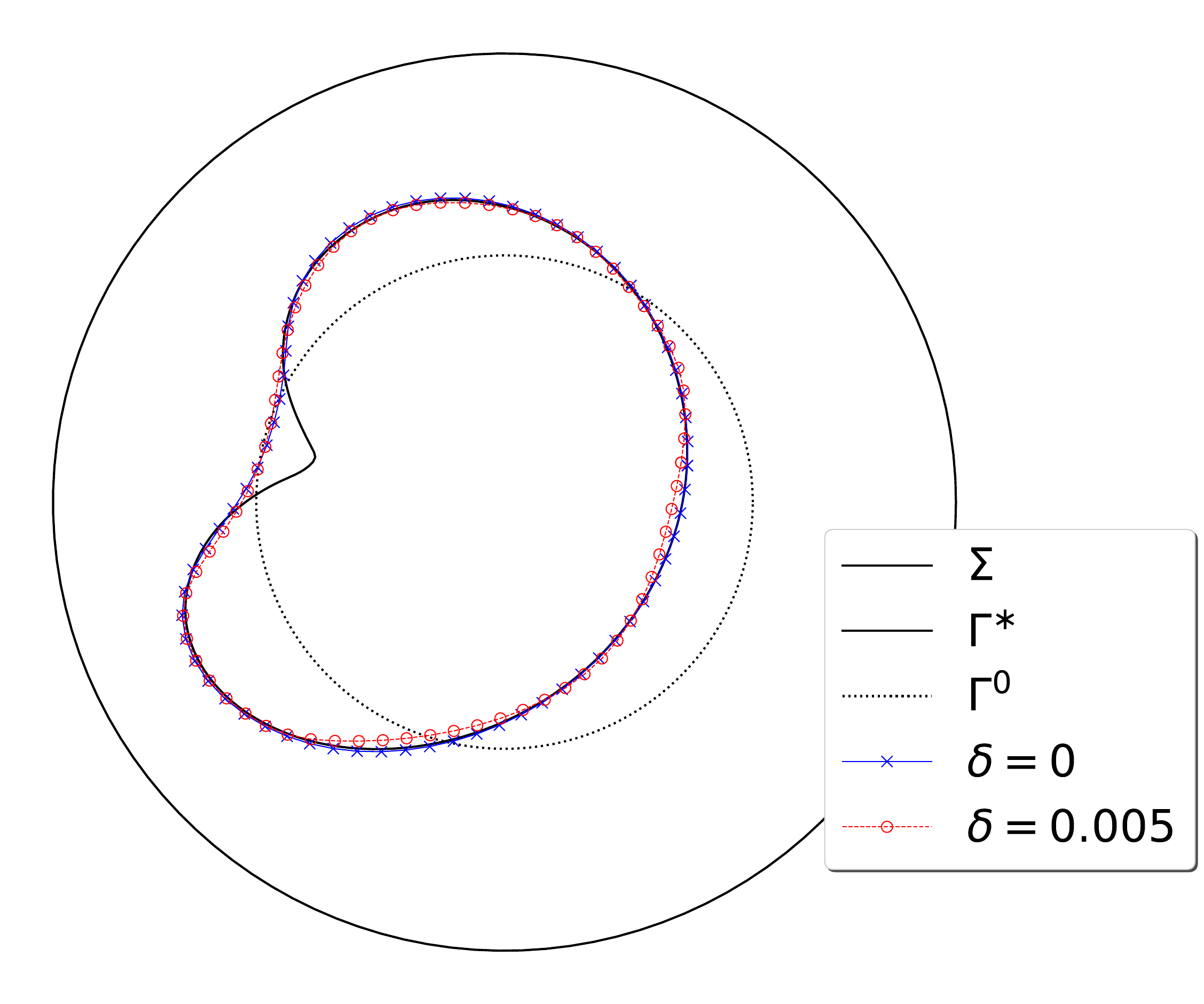}} &
	\resizebox{0.325\linewidth}{!}{\includegraphics{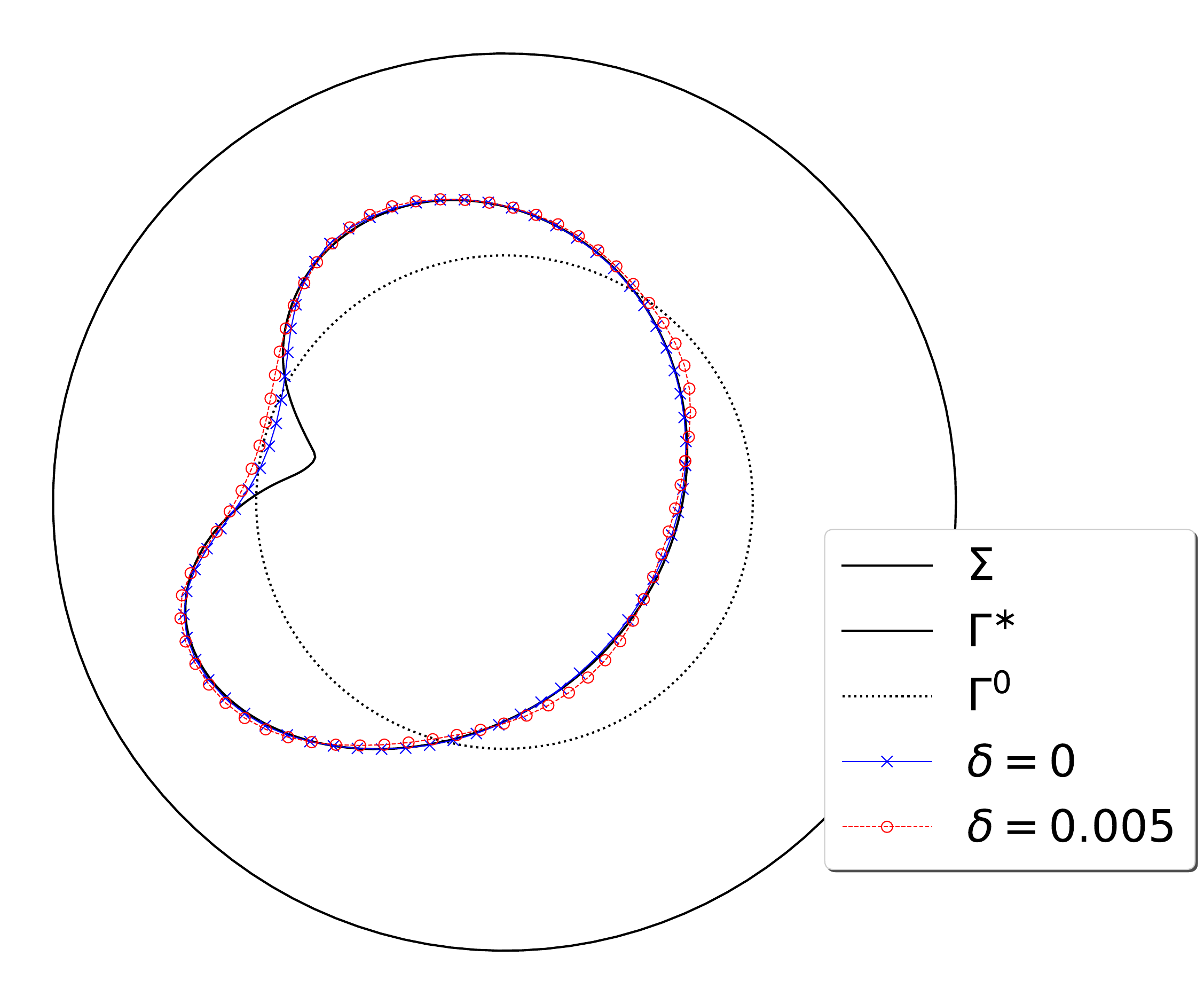}} &
	\resizebox{0.325\linewidth}{!}{\includegraphics{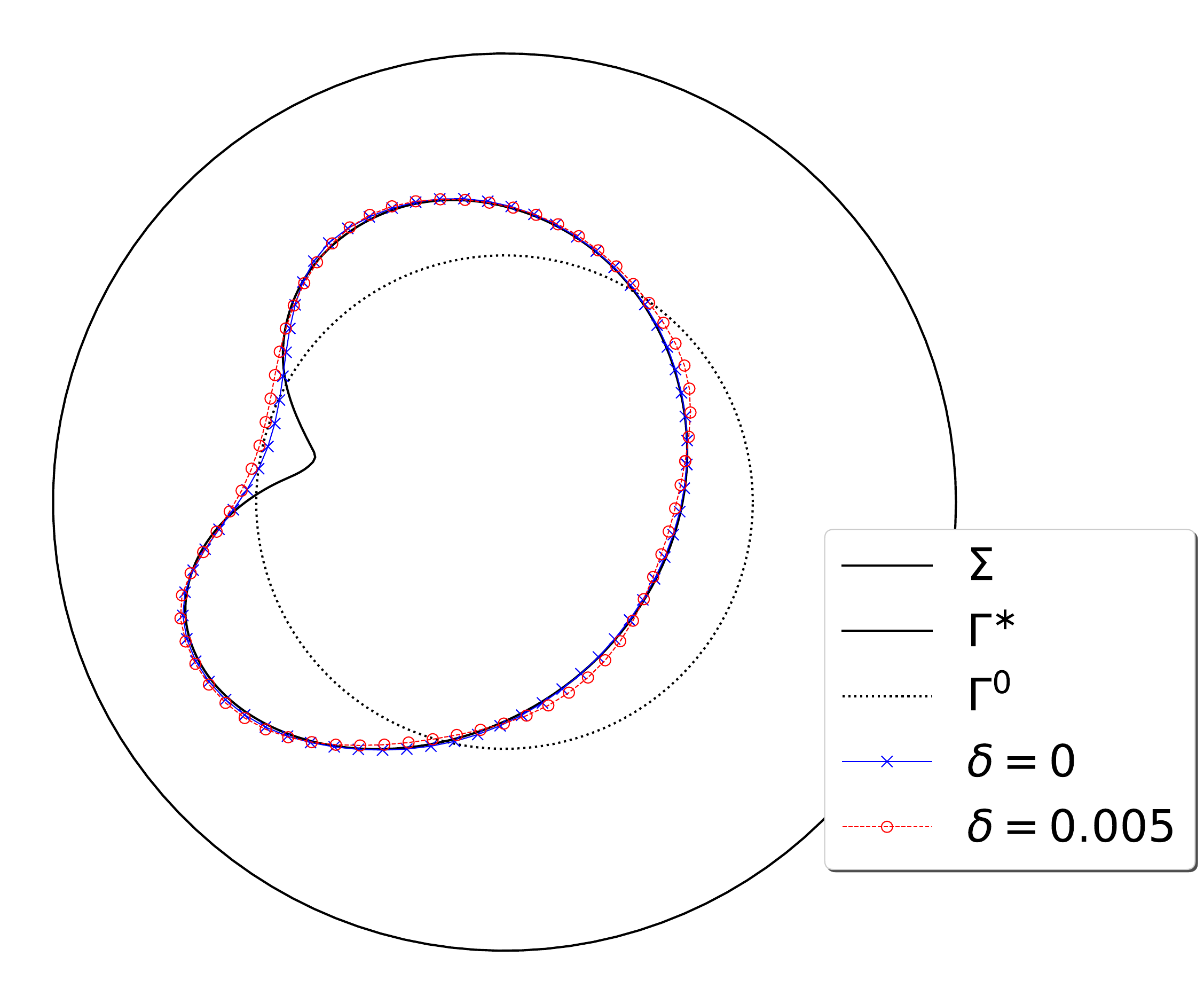}} \\ \hline
\end{tabular}
\end{adjustbox}
\caption{Reconstructed of shapes}
\label{tab:shapes}
\end{table}
\begin{table}[htp!]
\centering
\begin{adjustbox}{max width=\textwidth}
\begin{tabular}{|c|c|c|c|}
\hline
\textbf{Case} & \textbf{A1} & \textbf{A2} & \textbf{A3} \\ \hline
\textbf{B1} & \resizebox{0.325\linewidth}{!}{\includegraphics{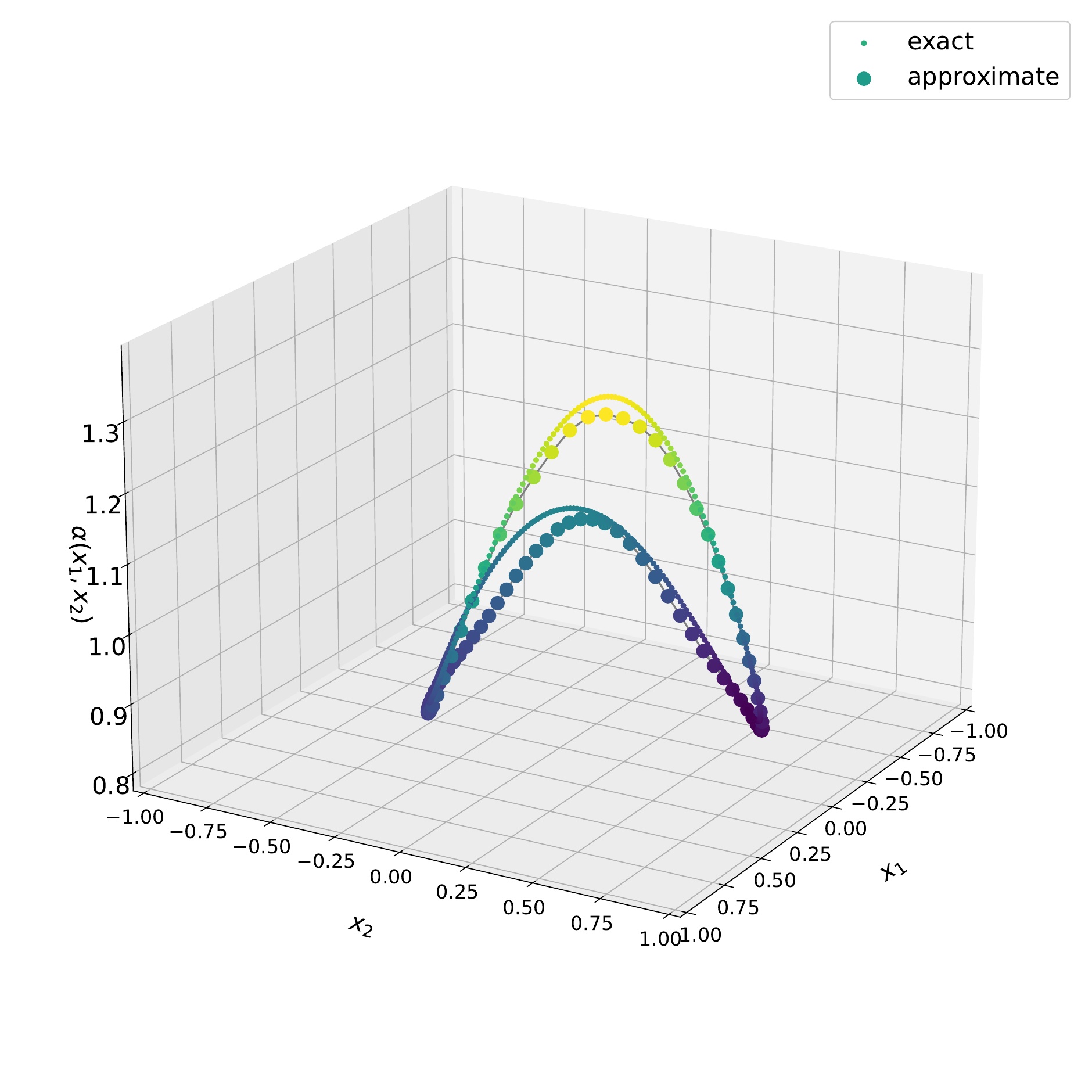}} &
	\resizebox{0.325\linewidth}{!}{\includegraphics{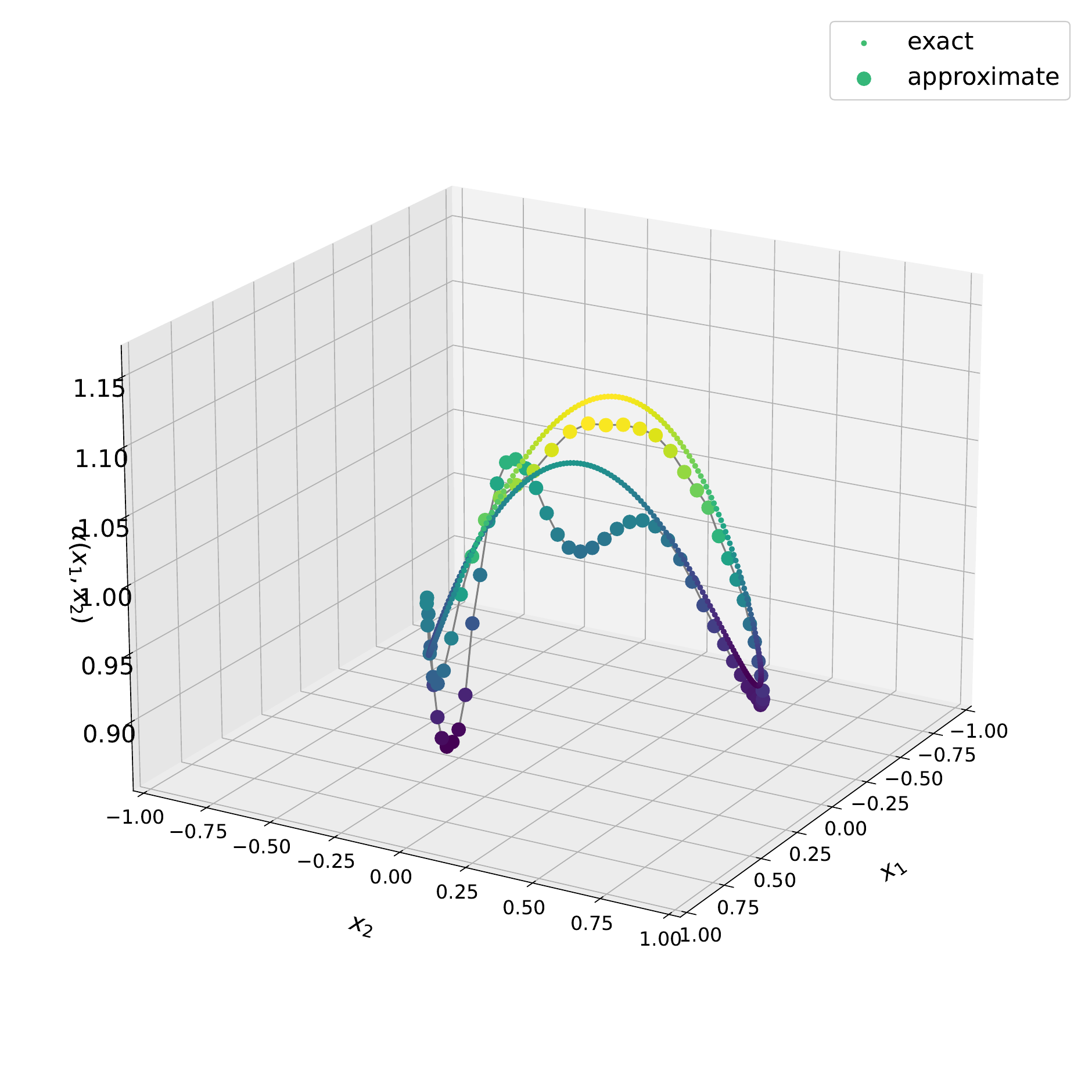}} &
	\resizebox{0.325\linewidth}{!}{\includegraphics{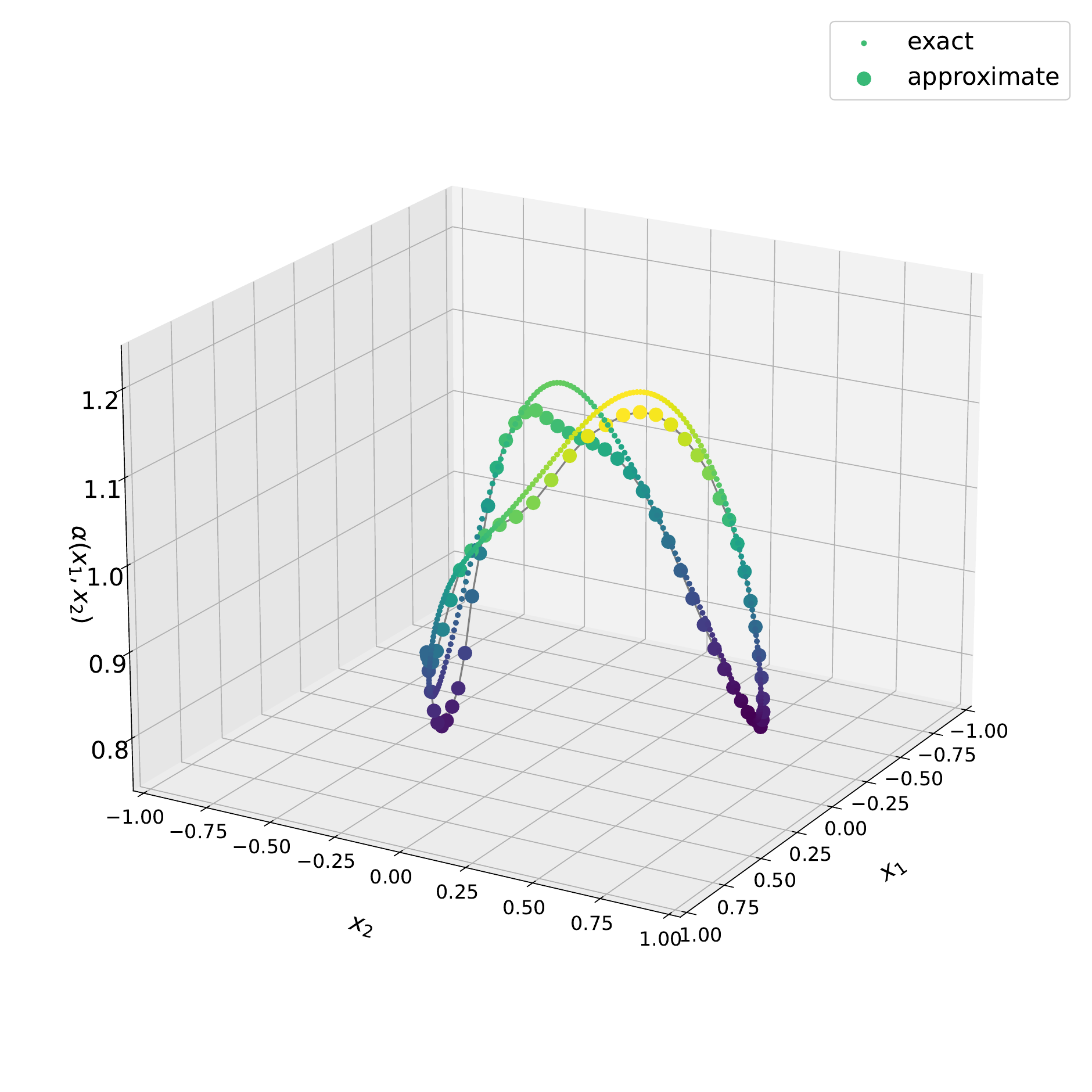}} \\ \hline
\textbf{B2} & \resizebox{0.325\linewidth}{!}{\includegraphics{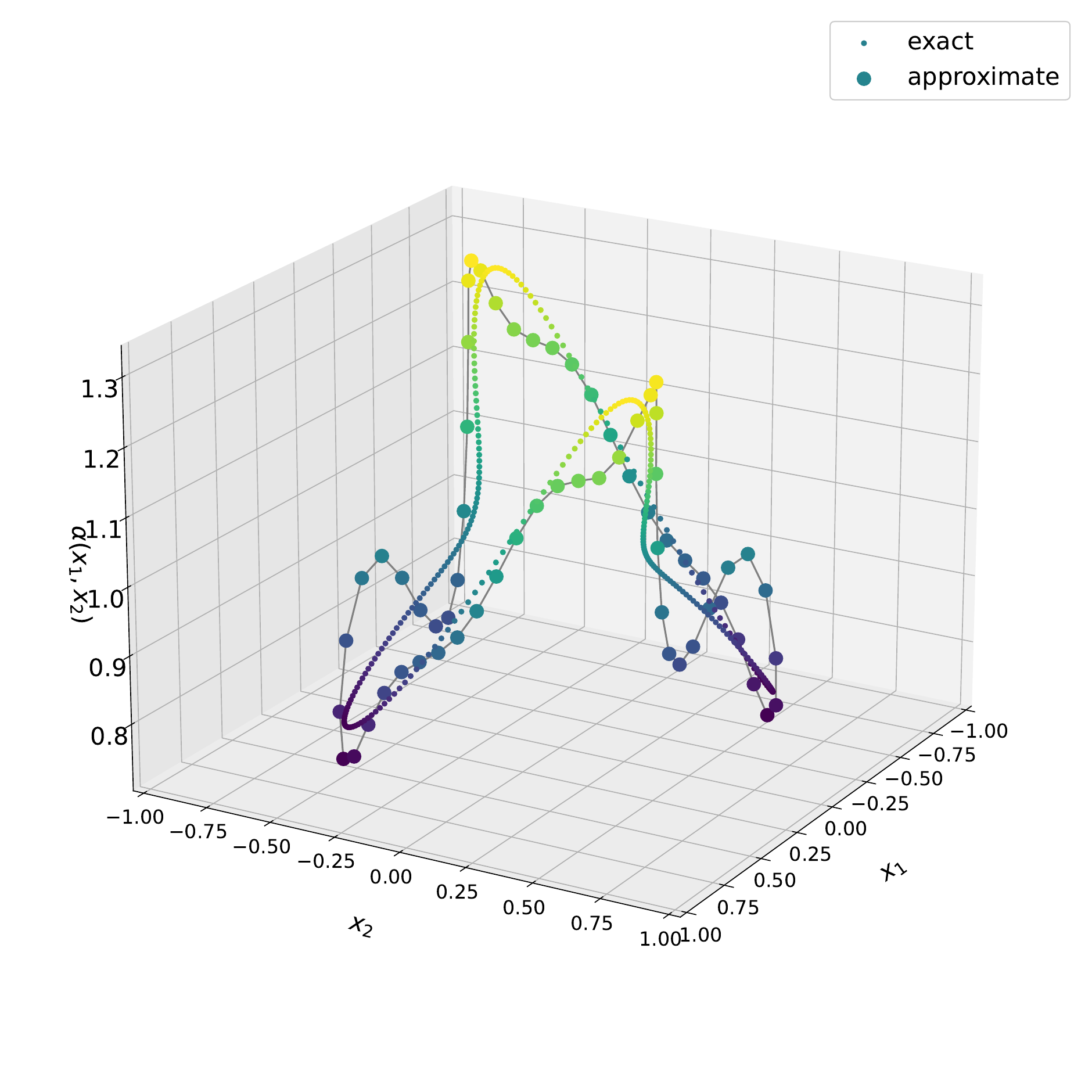}} &
	\resizebox{0.325\linewidth}{!}{\includegraphics{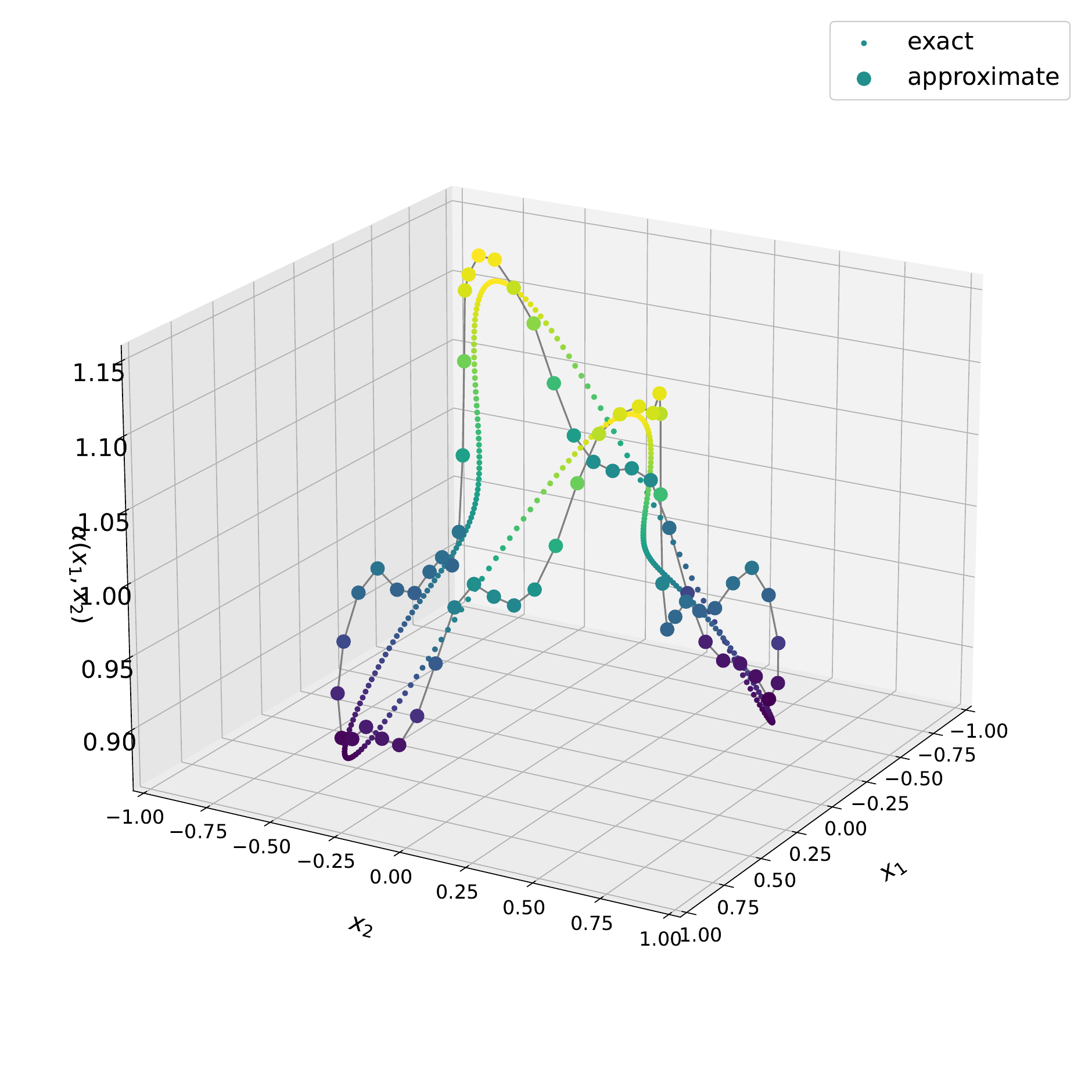}} &
	\resizebox{0.325\linewidth}{!}{\includegraphics{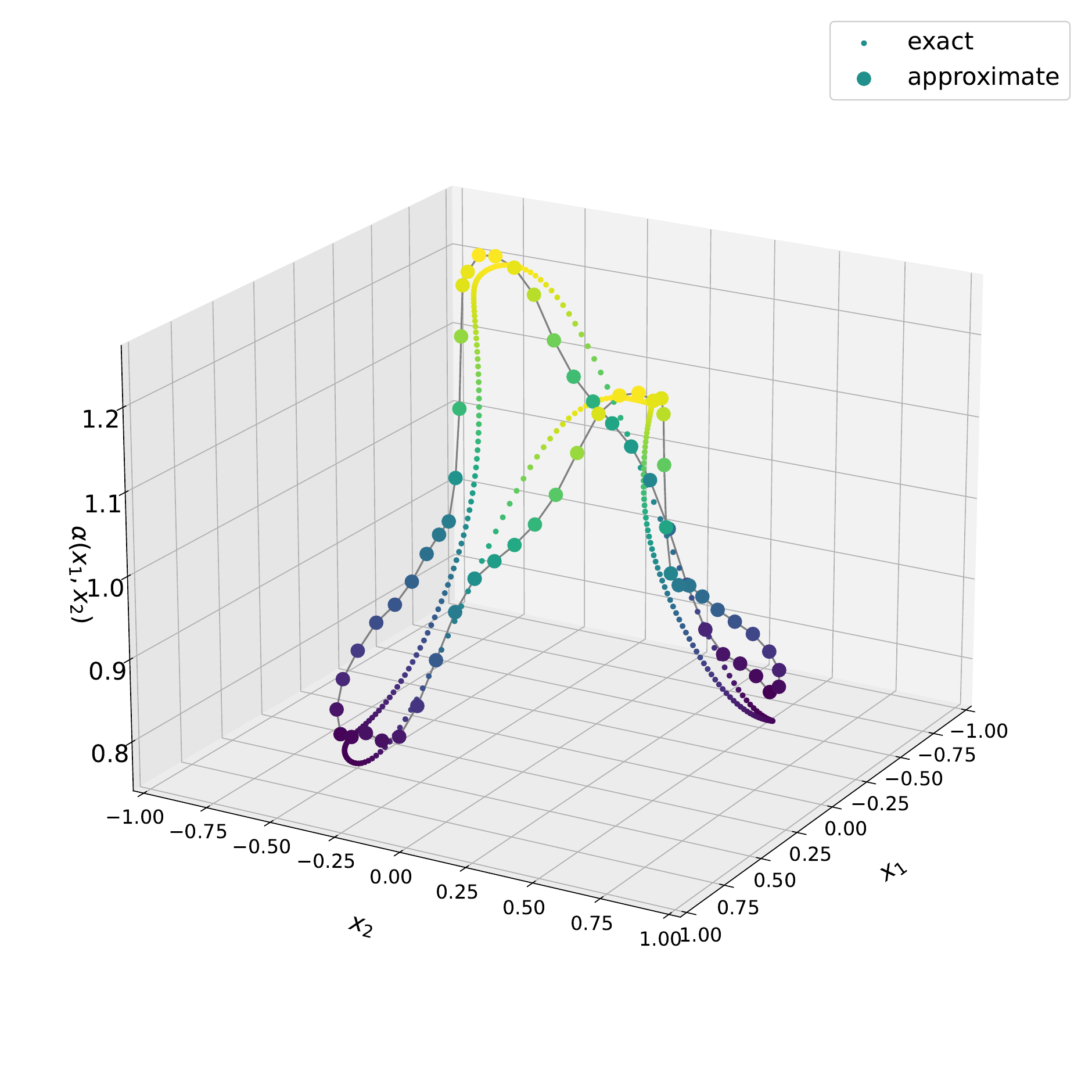}} \\ \hline
\textbf{B3} & \resizebox{0.325\linewidth}{!}{\includegraphics{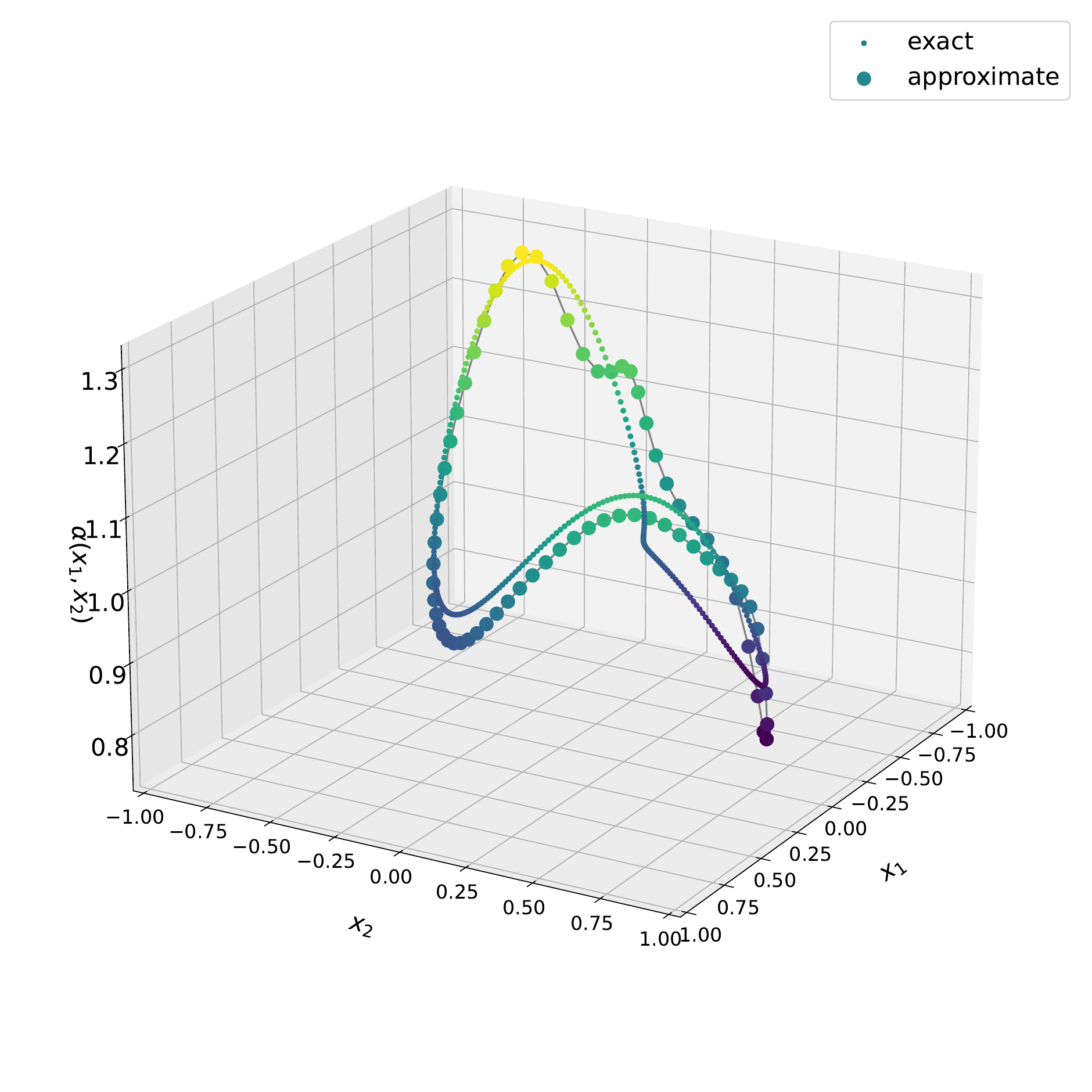}} &
	\resizebox{0.325\linewidth}{!}{\includegraphics{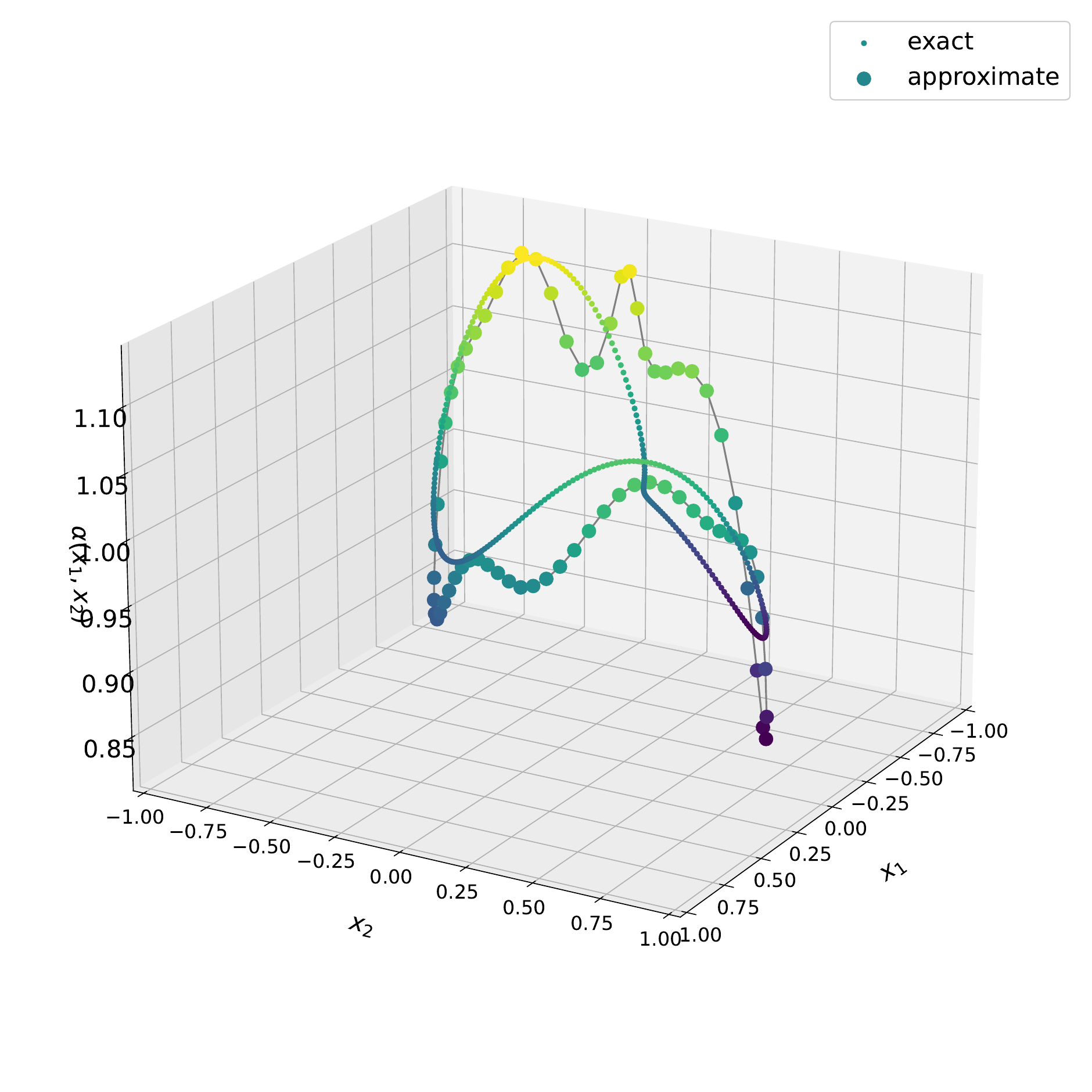}} &
	\resizebox{0.325\linewidth}{!}{\includegraphics{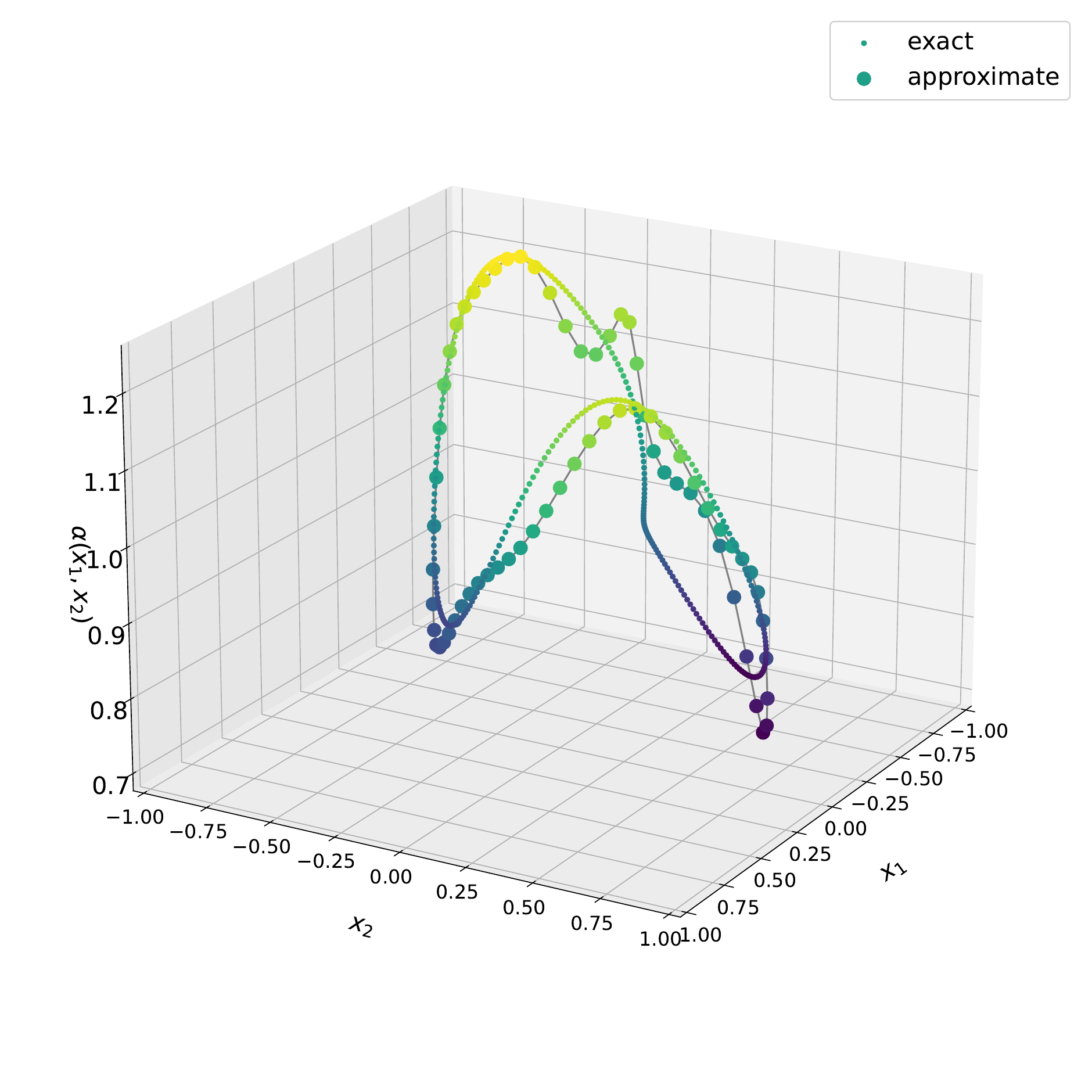}} \\ \hline
\end{tabular}
\end{adjustbox}
\caption{3D view of the reconstructed Robin coefficient, rotated by $30^\circ$}
\label{tab:robin_30}
\end{table}
\begin{table}[htp!]
\centering
\begin{adjustbox}{max width=\textwidth}
\begin{tabular}{|c|c|c|c|}
\hline
\textbf{Case} & \textbf{A1} & \textbf{A2} & \textbf{A3} \\ \hline
\textbf{B1} & \resizebox{0.325\linewidth}{!}{\includegraphics{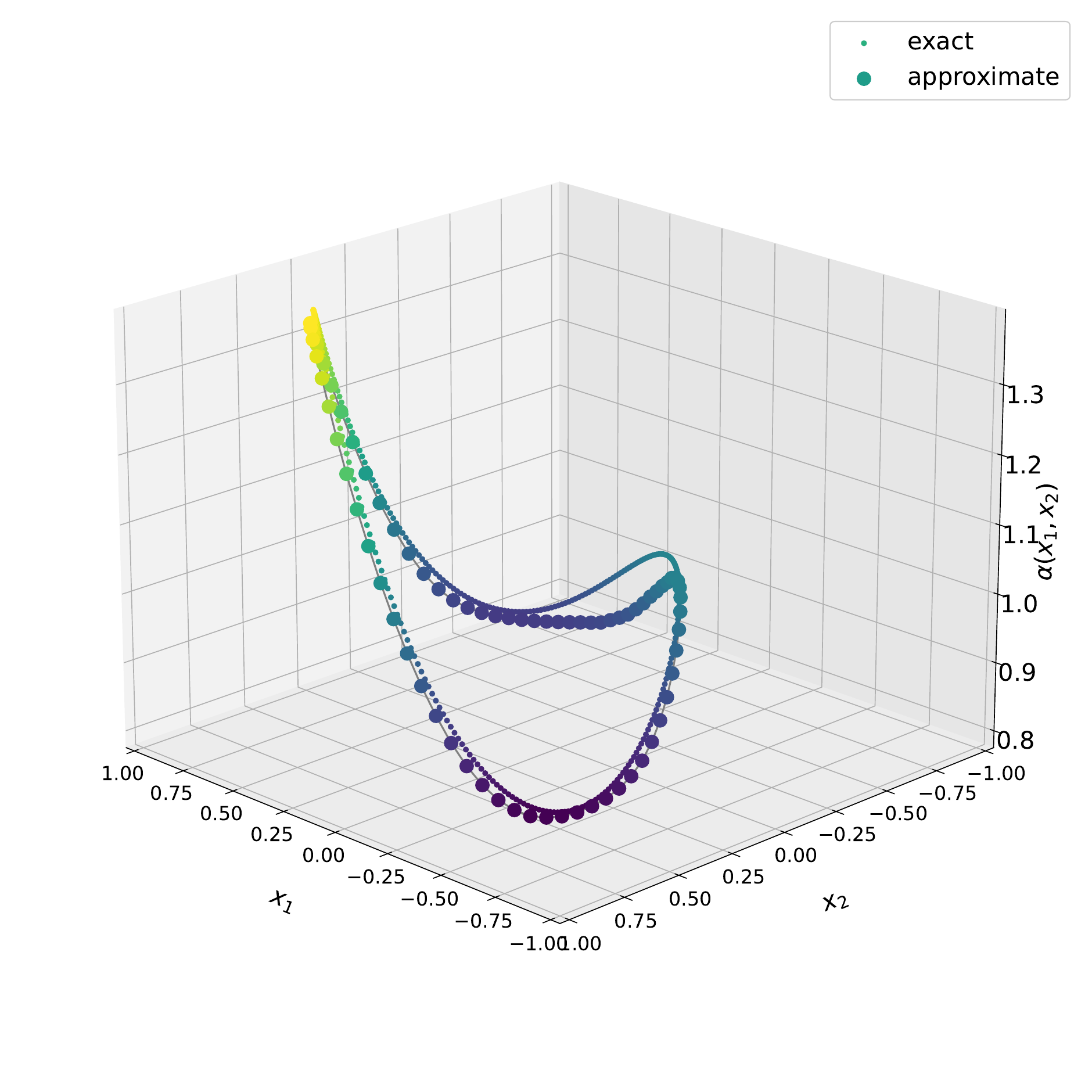}} &
	\resizebox{0.325\linewidth}{!}{\includegraphics{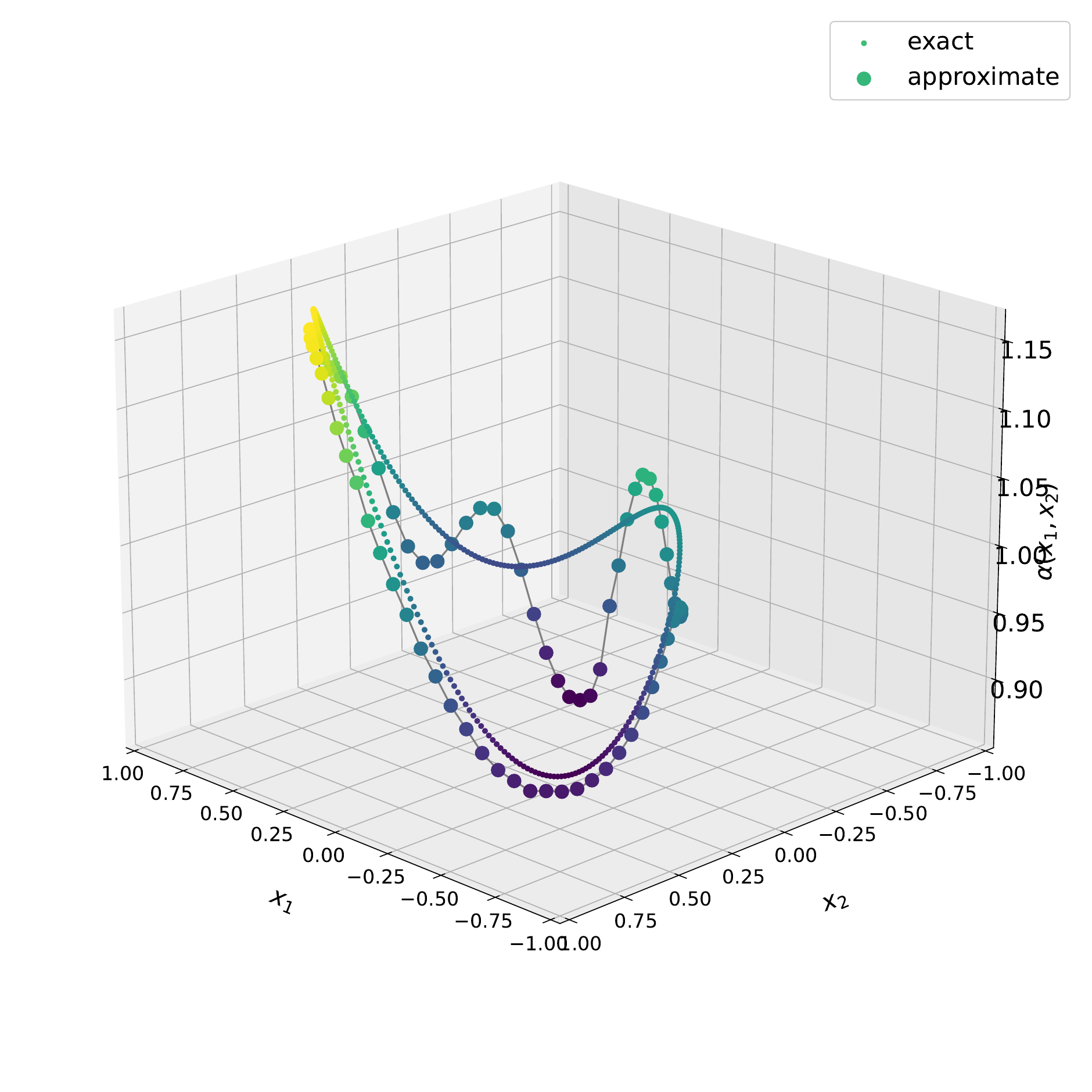}} &
	\resizebox{0.325\linewidth}{!}{\includegraphics{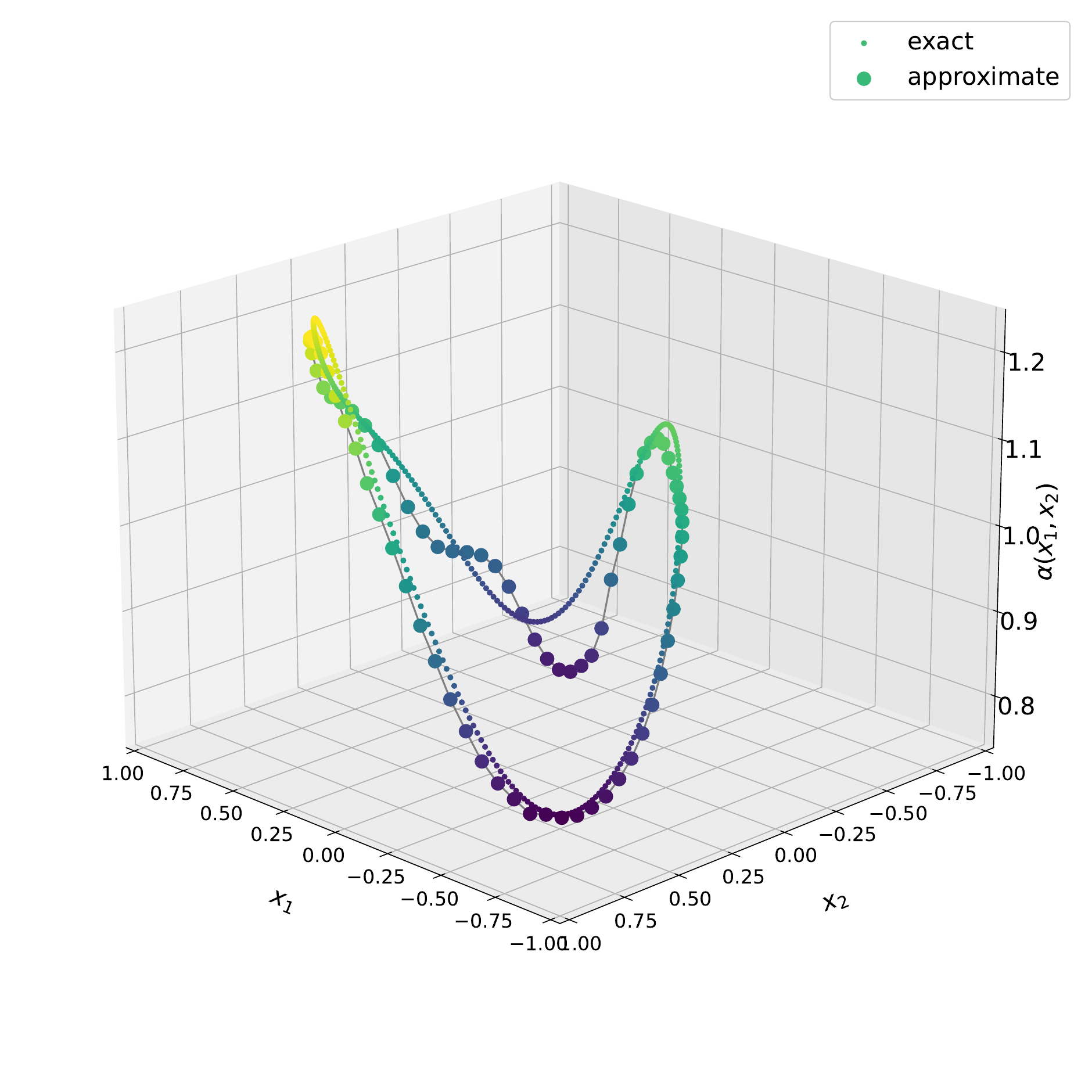}} \\ \hline
\textbf{B2} & \resizebox{0.325\linewidth}{!}{\includegraphics{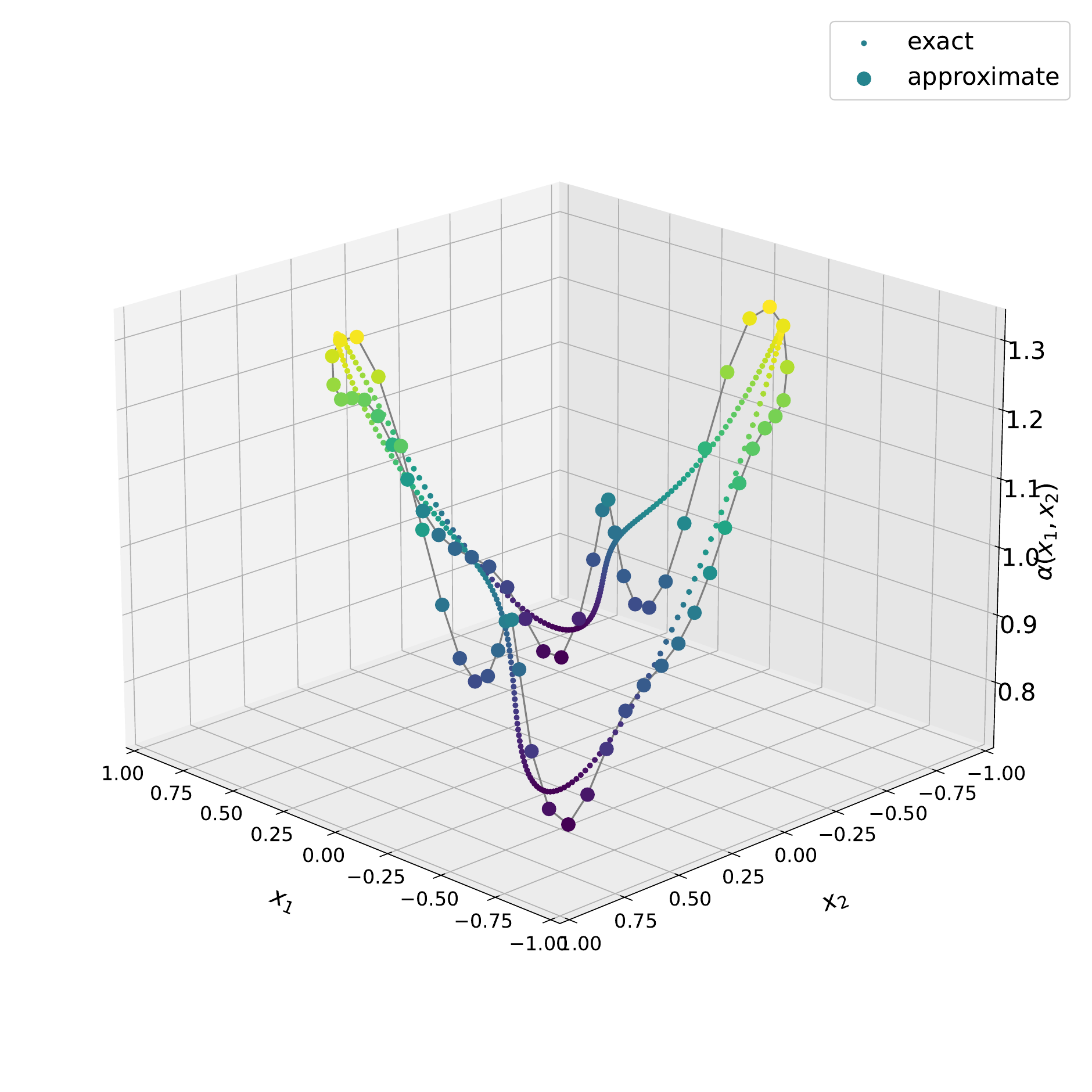}} &
	\resizebox{0.325\linewidth}{!}{\includegraphics{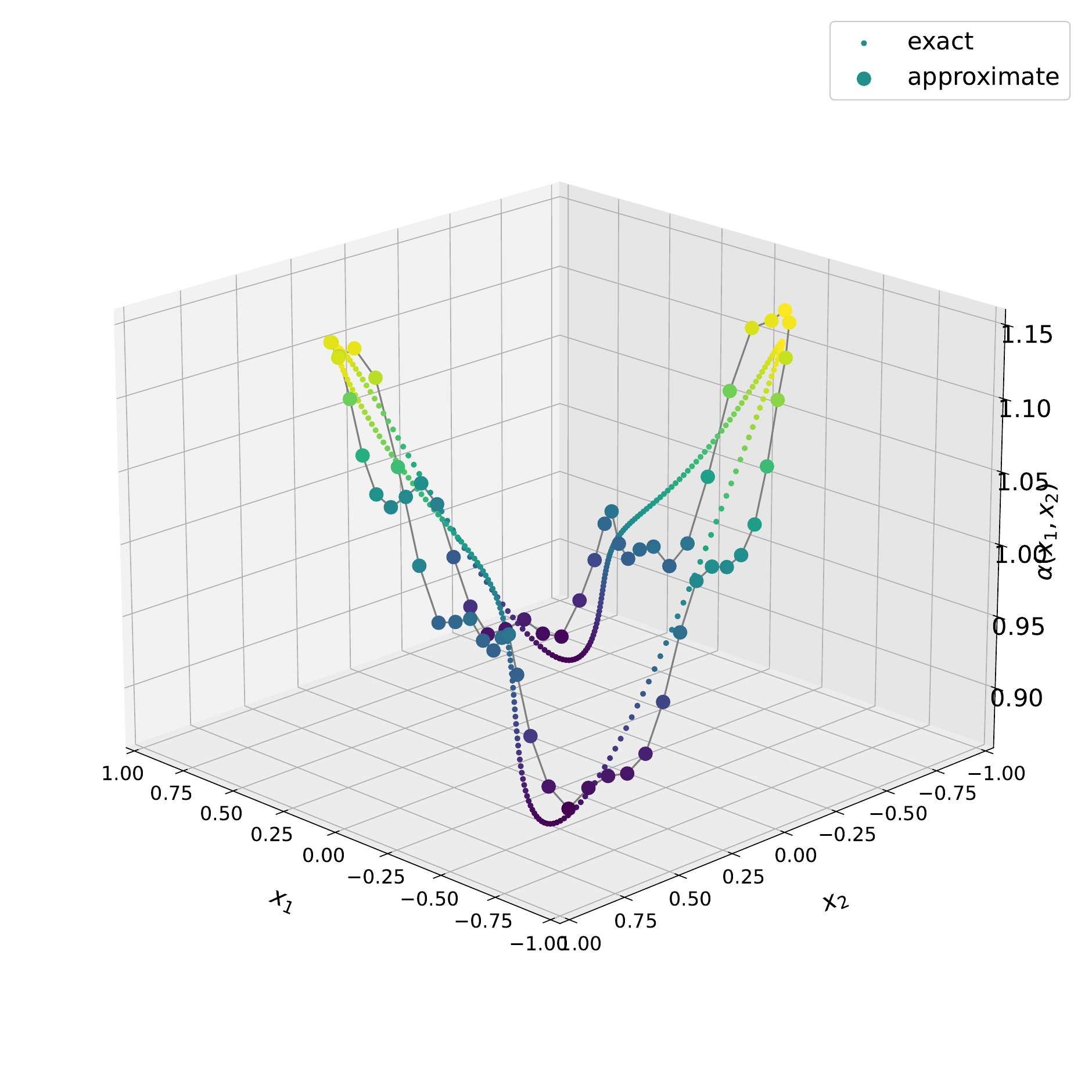}} &
	\resizebox{0.325\linewidth}{!}{\includegraphics{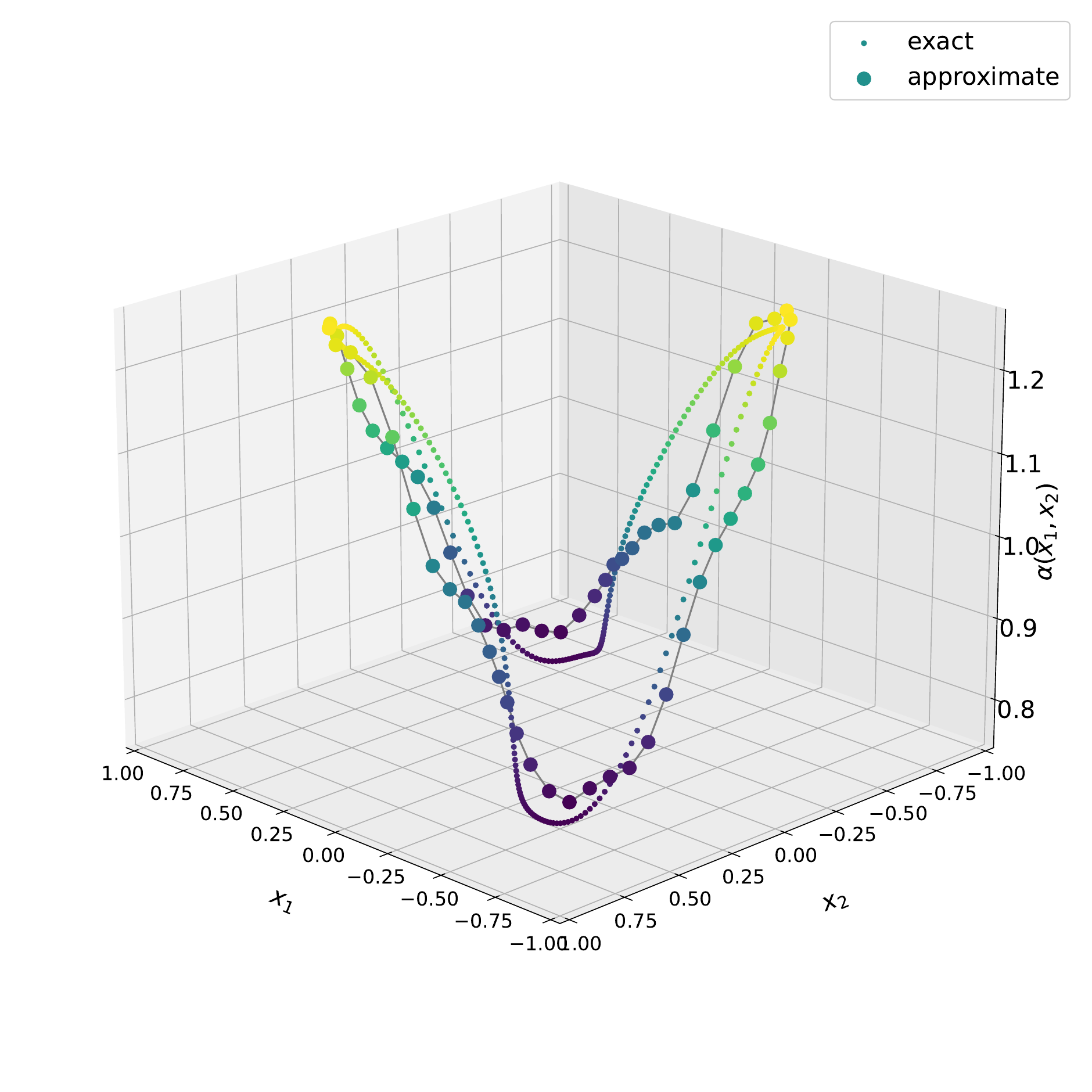}} \\ \hline
\textbf{B3} & \resizebox{0.325\linewidth}{!}{\includegraphics{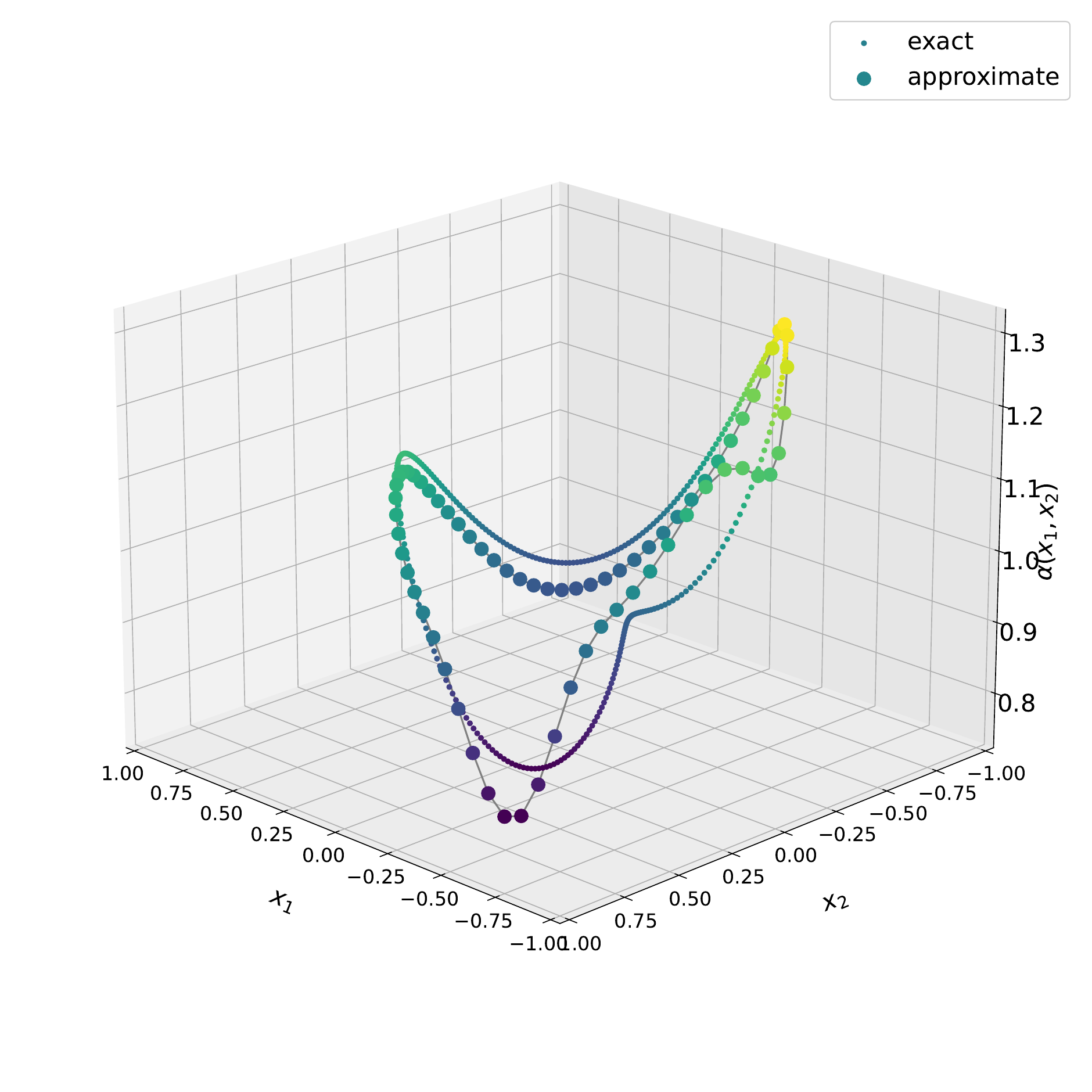}} &
	\resizebox{0.325\linewidth}{!}{\includegraphics{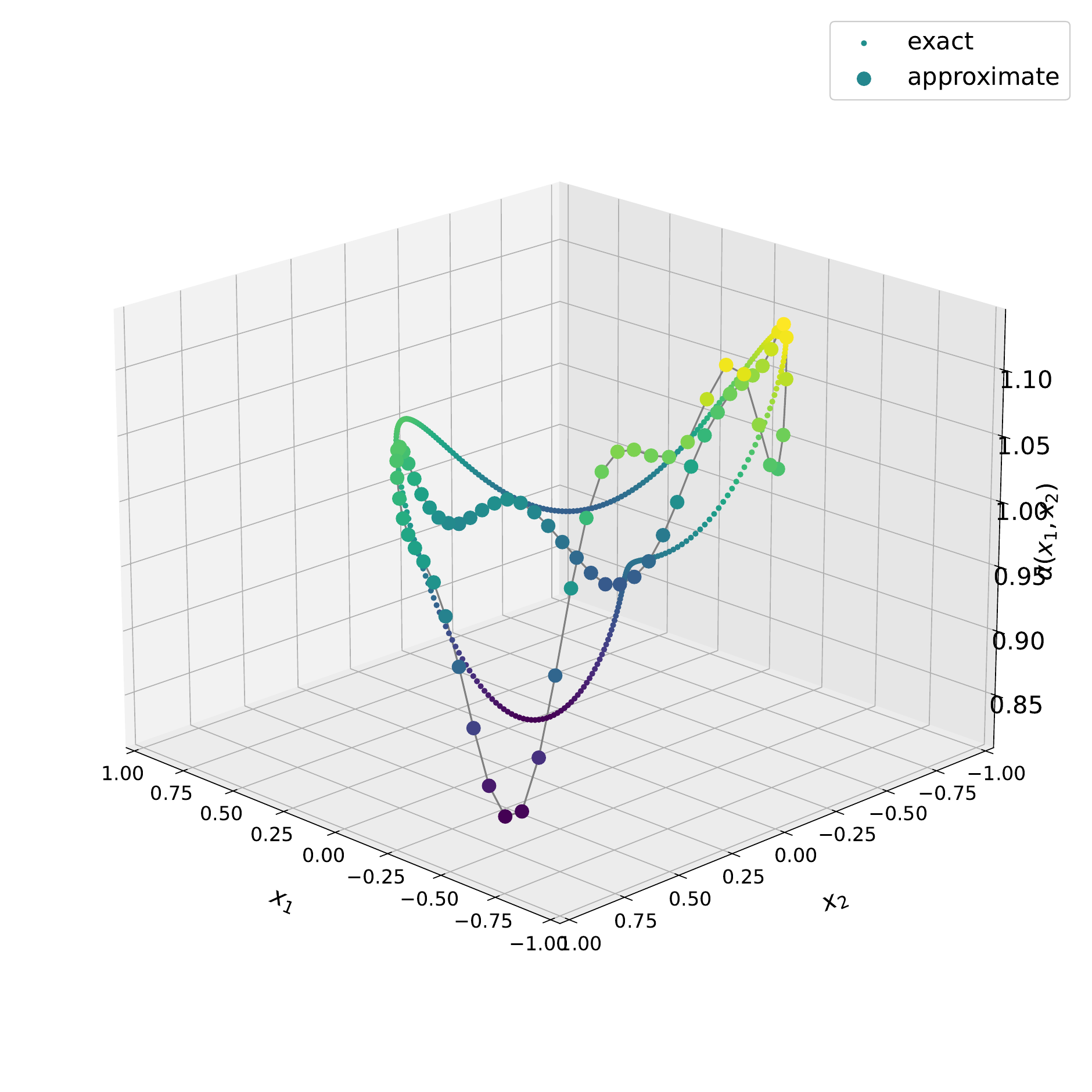}} &
	\resizebox{0.325\linewidth}{!}{\includegraphics{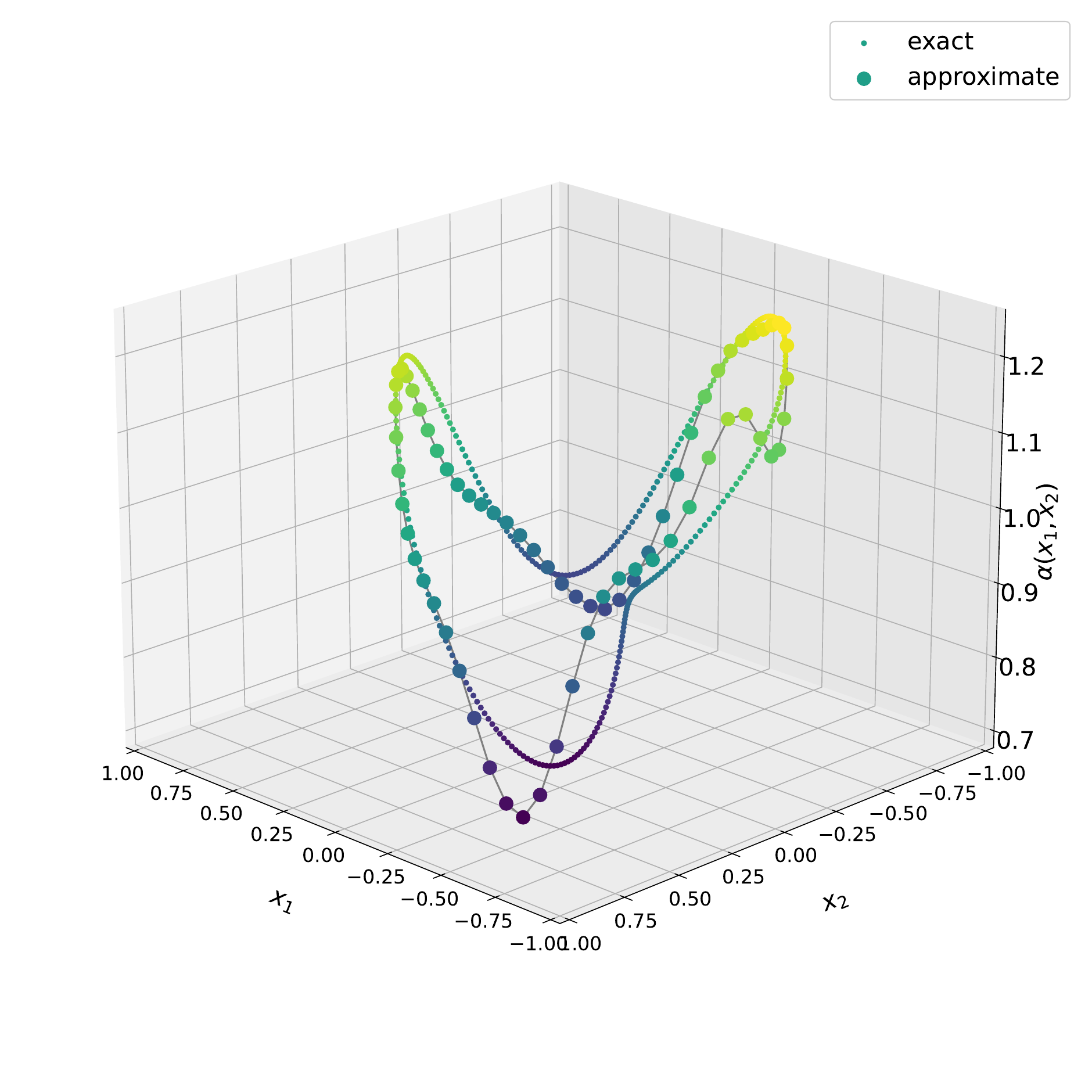}} \\ \hline
\end{tabular}
\end{adjustbox}
\caption{3D view of the reconstructed Robin coefficient, rotated by $135^\circ$}
\label{tab:robin_135}
\end{table}
\begin{table}[htp!]
\centering
\begin{adjustbox}{max width=\textwidth}
\begin{tabular}{|c|c|c|c|}
\hline
\textbf{Case} & \textbf{A1} & \textbf{A2} & \textbf{A3} \\ \hline
\textbf{B1} & \resizebox{0.325\linewidth}{!}{\includegraphics{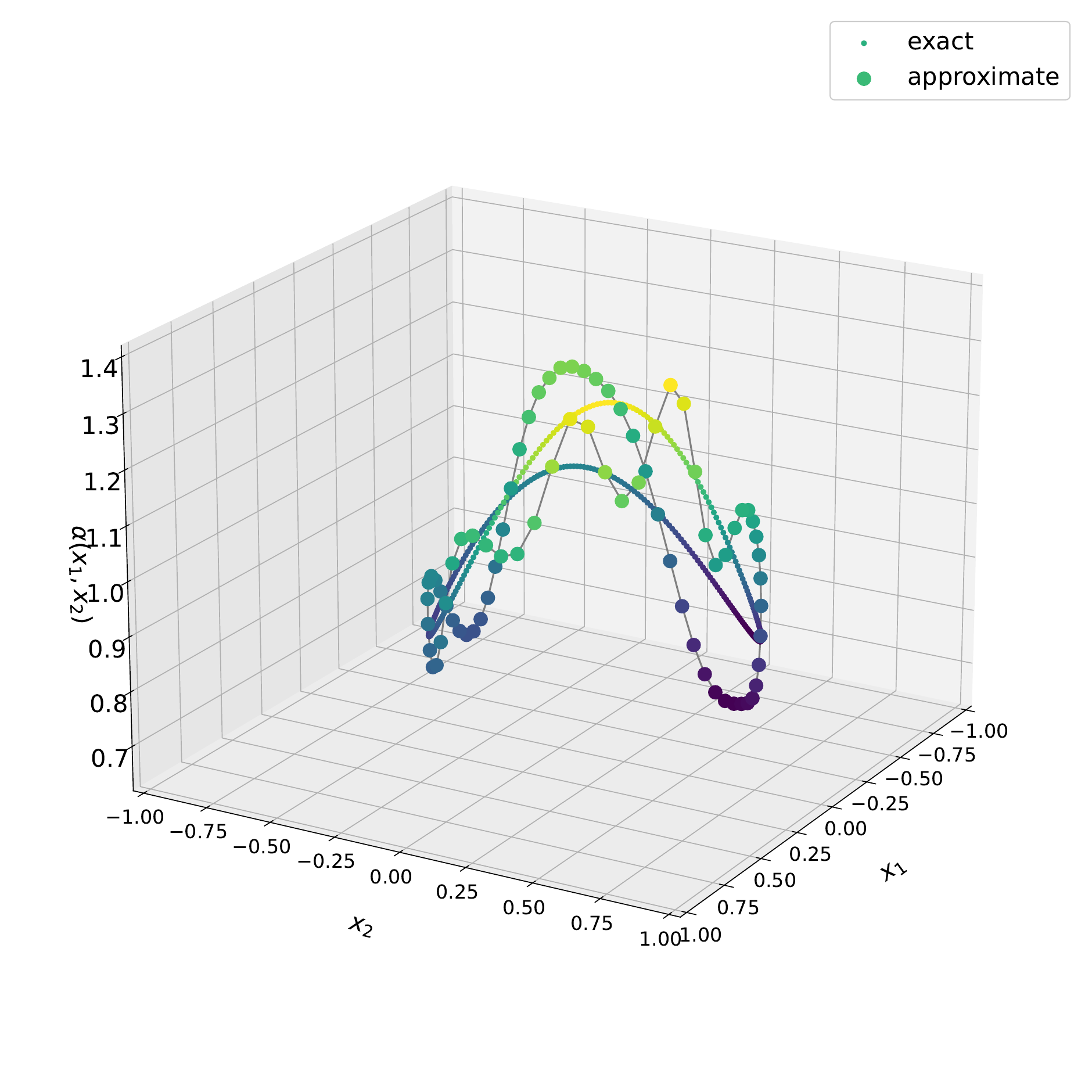}} &
	\resizebox{0.325\linewidth}{!}{\includegraphics{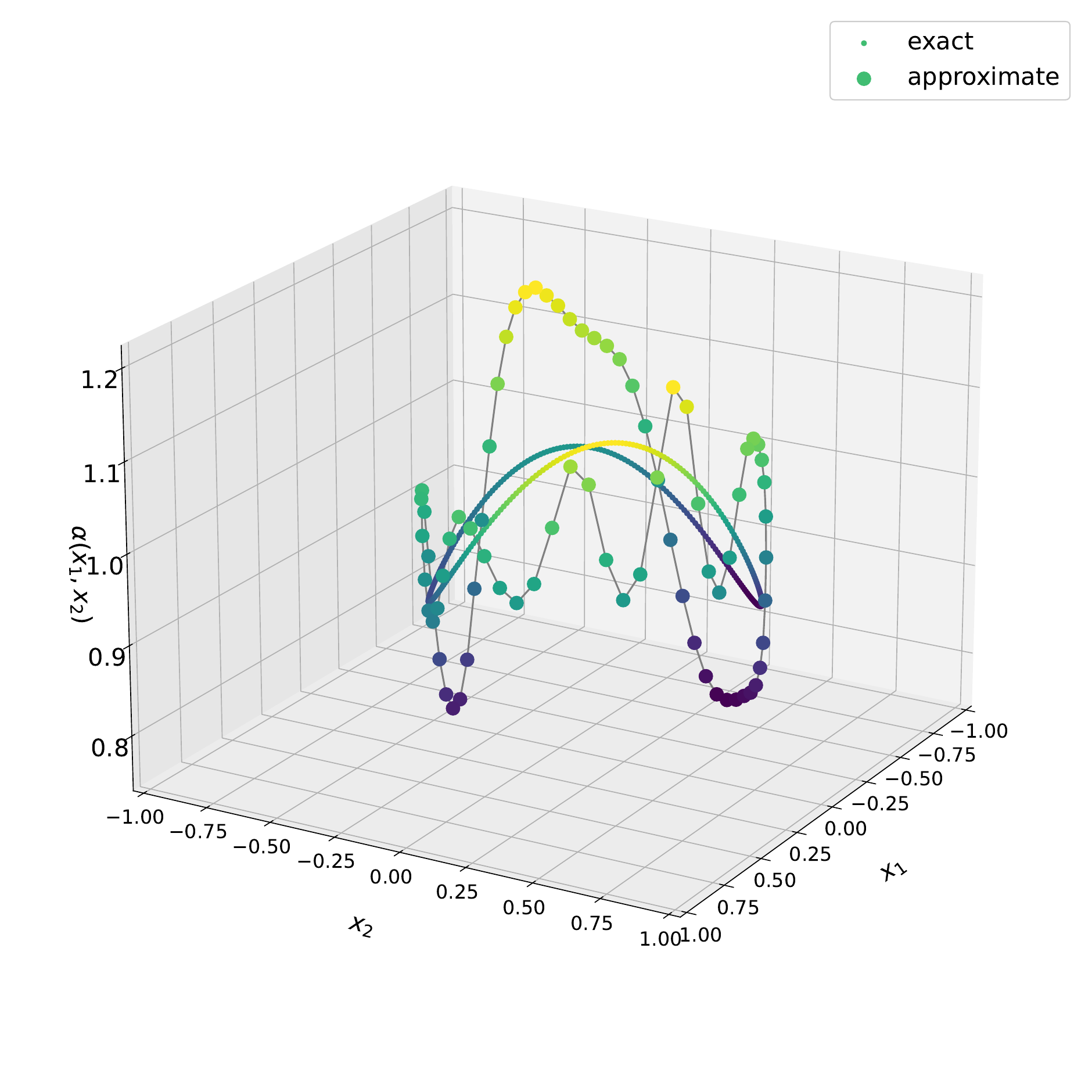}} &
	\resizebox{0.325\linewidth}{!}{\includegraphics{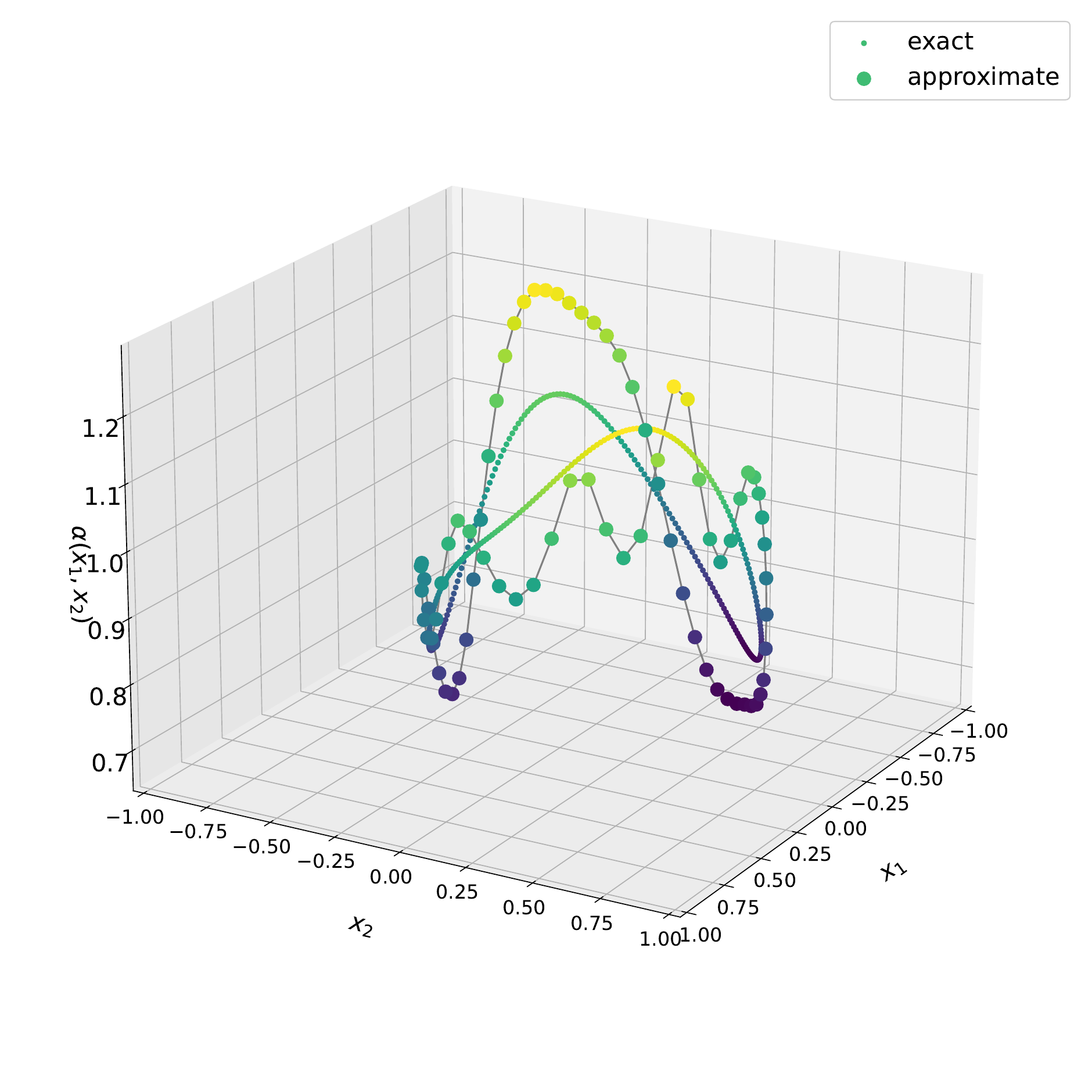}} \\ \hline
\textbf{B2} & \resizebox{0.325\linewidth}{!}{\includegraphics{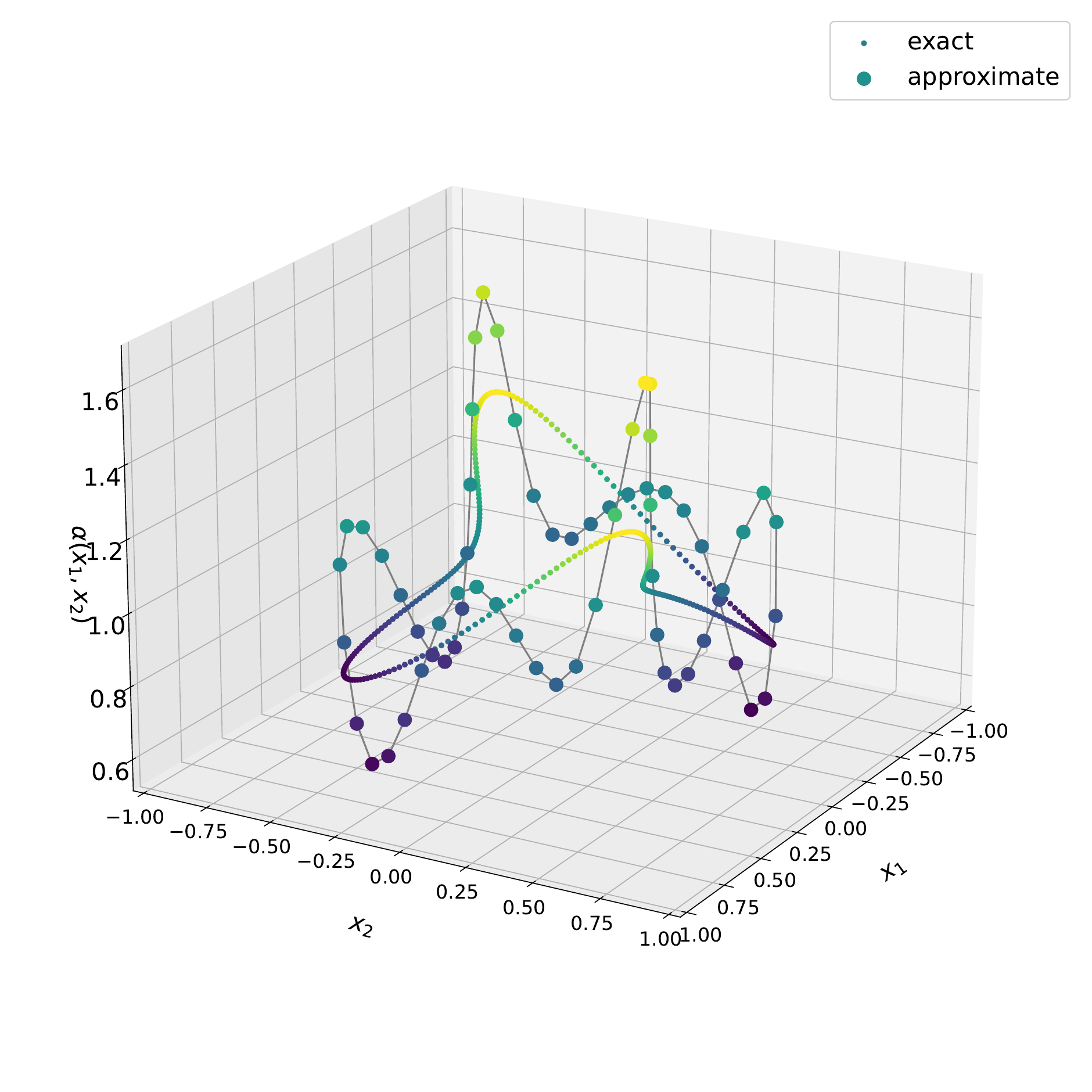}} &
	\resizebox{0.325\linewidth}{!}{\includegraphics{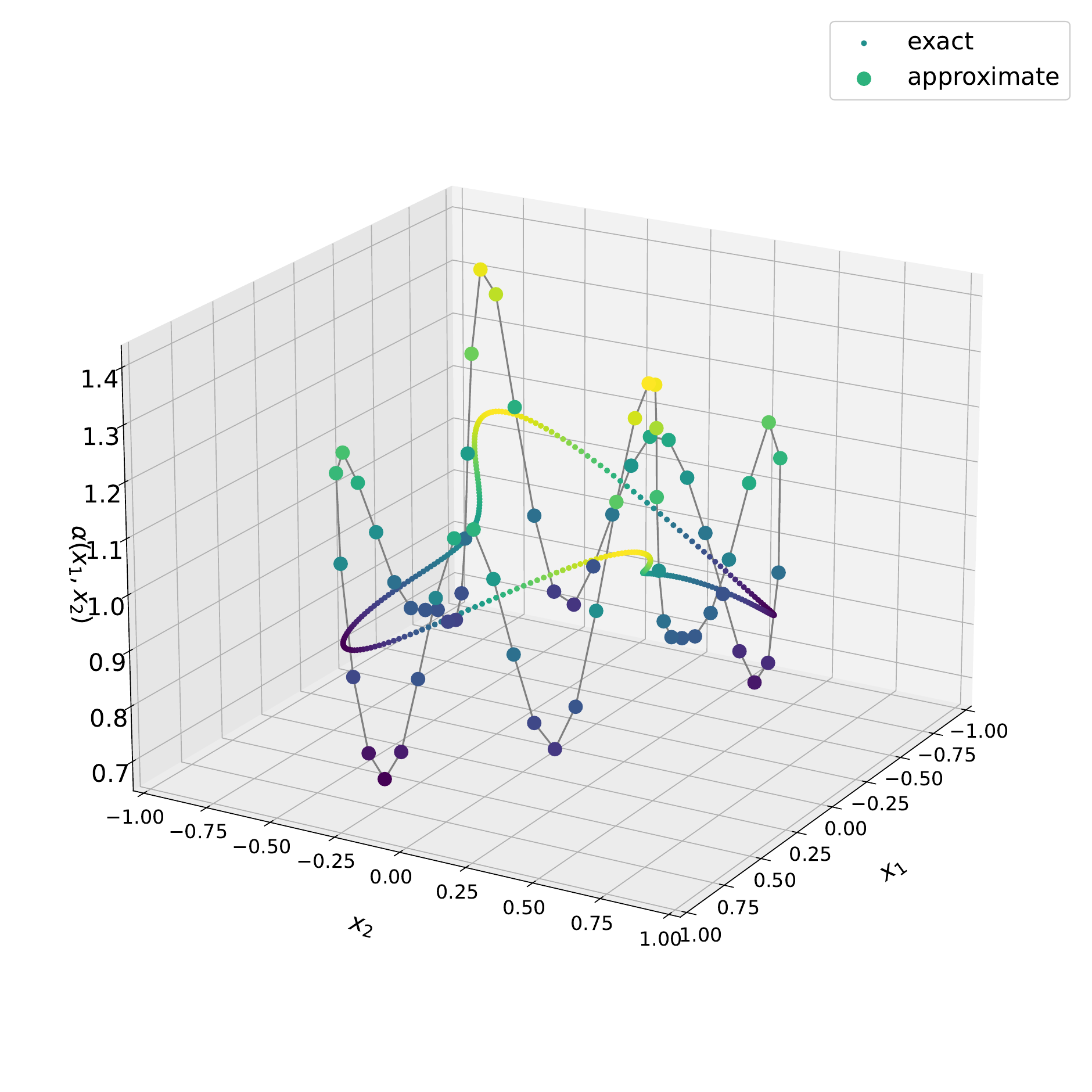}} &
	\resizebox{0.325\linewidth}{!}{\includegraphics{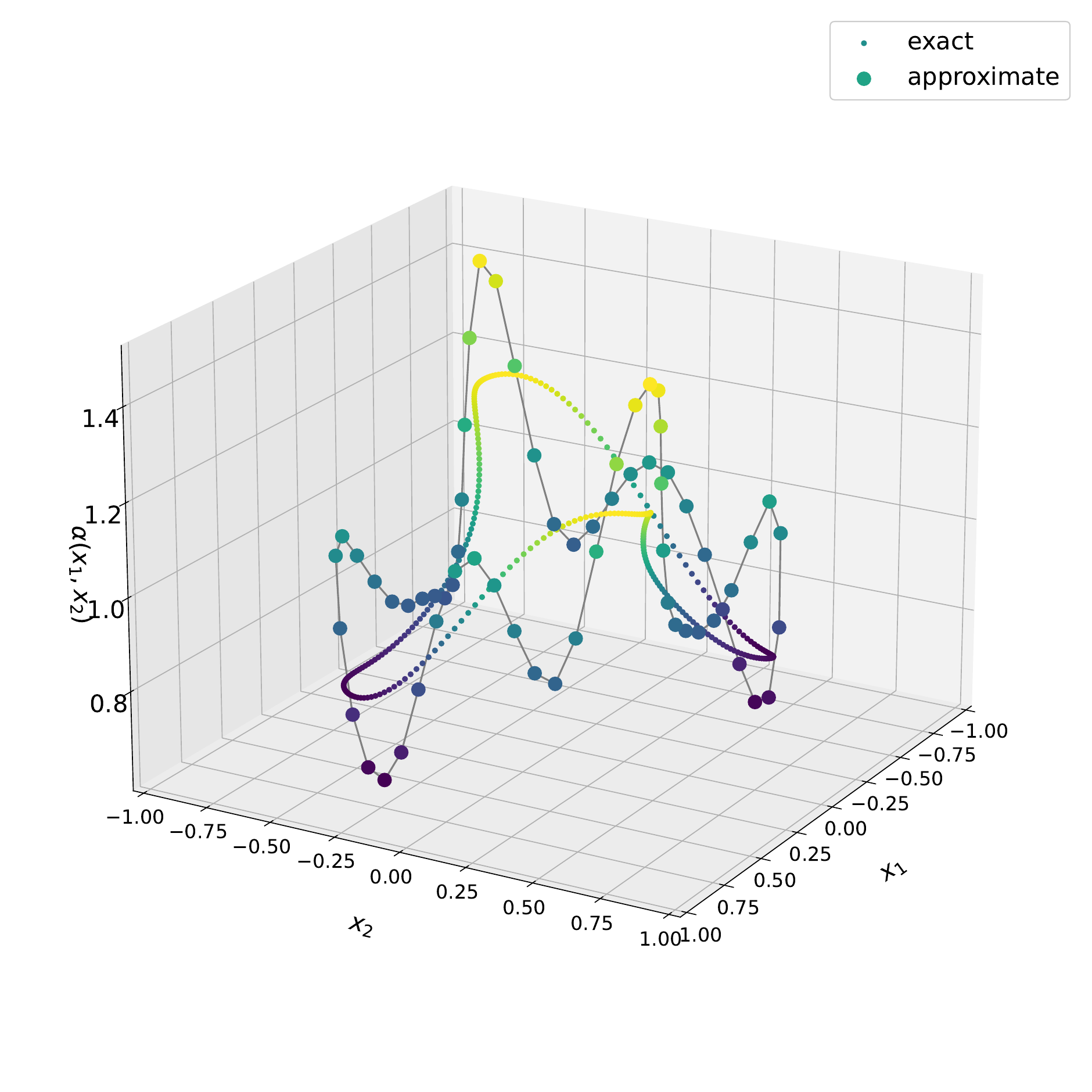}} \\ \hline
\textbf{B3} & \resizebox{0.325\linewidth}{!}{\includegraphics{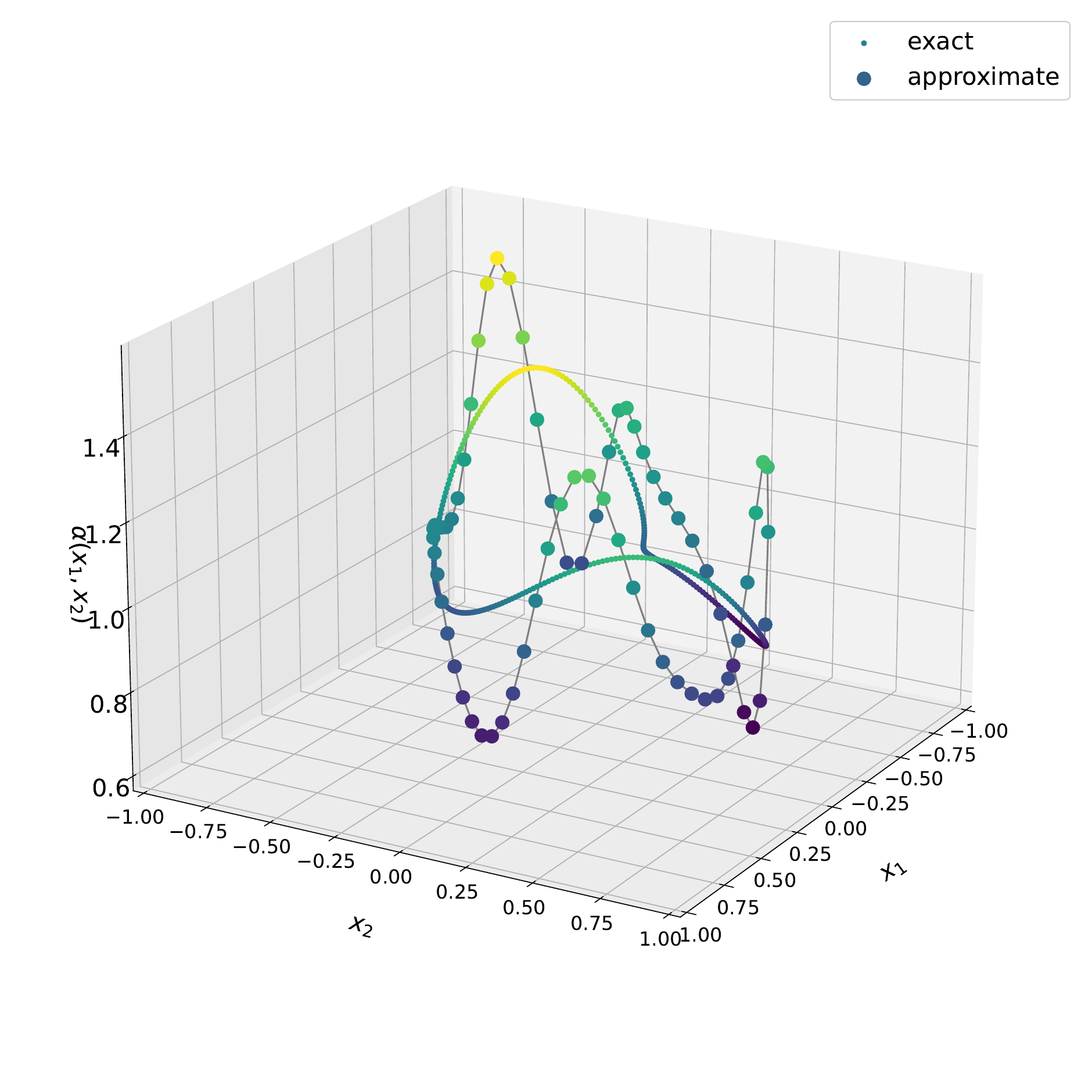}} &
	\resizebox{0.325\linewidth}{!}{\includegraphics{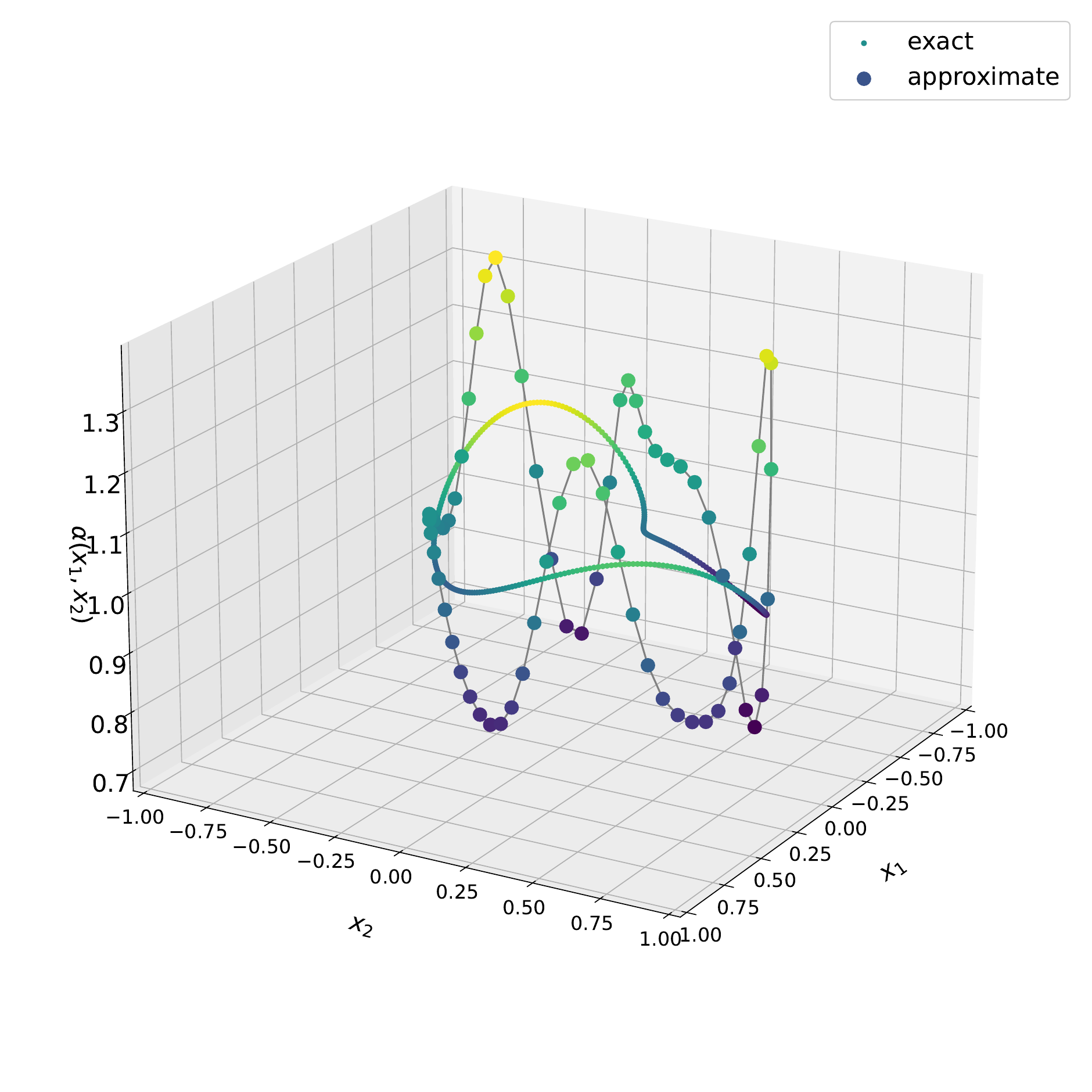}} &
	\resizebox{0.325\linewidth}{!}{\includegraphics{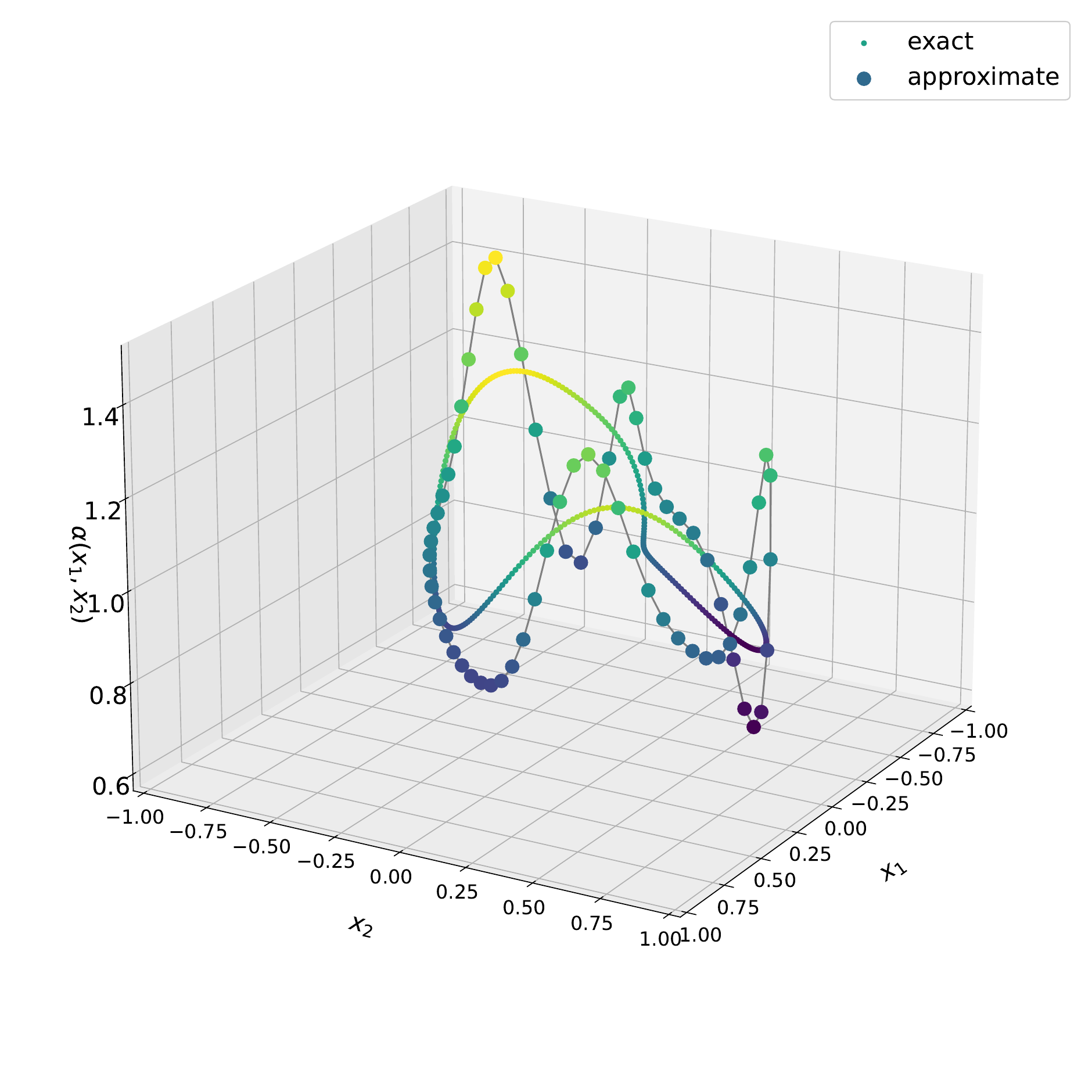}} \\ \hline
\end{tabular}
\end{adjustbox}
\caption{3D view of the reconstructed Robin coefficient, rotated by $30^\circ$, under $0.5\%$ noise}
\label{tab:noisy_robin_30}
\end{table}
\begin{table}[htp!]
\centering
\begin{adjustbox}{max width=\textwidth}
\begin{tabular}{|c|c|c|c|}
\hline
\textbf{Case} & \textbf{A1} & \textbf{A2} & \textbf{A3} \\ \hline
\textbf{B1} & \resizebox{0.325\linewidth}{!}{\includegraphics{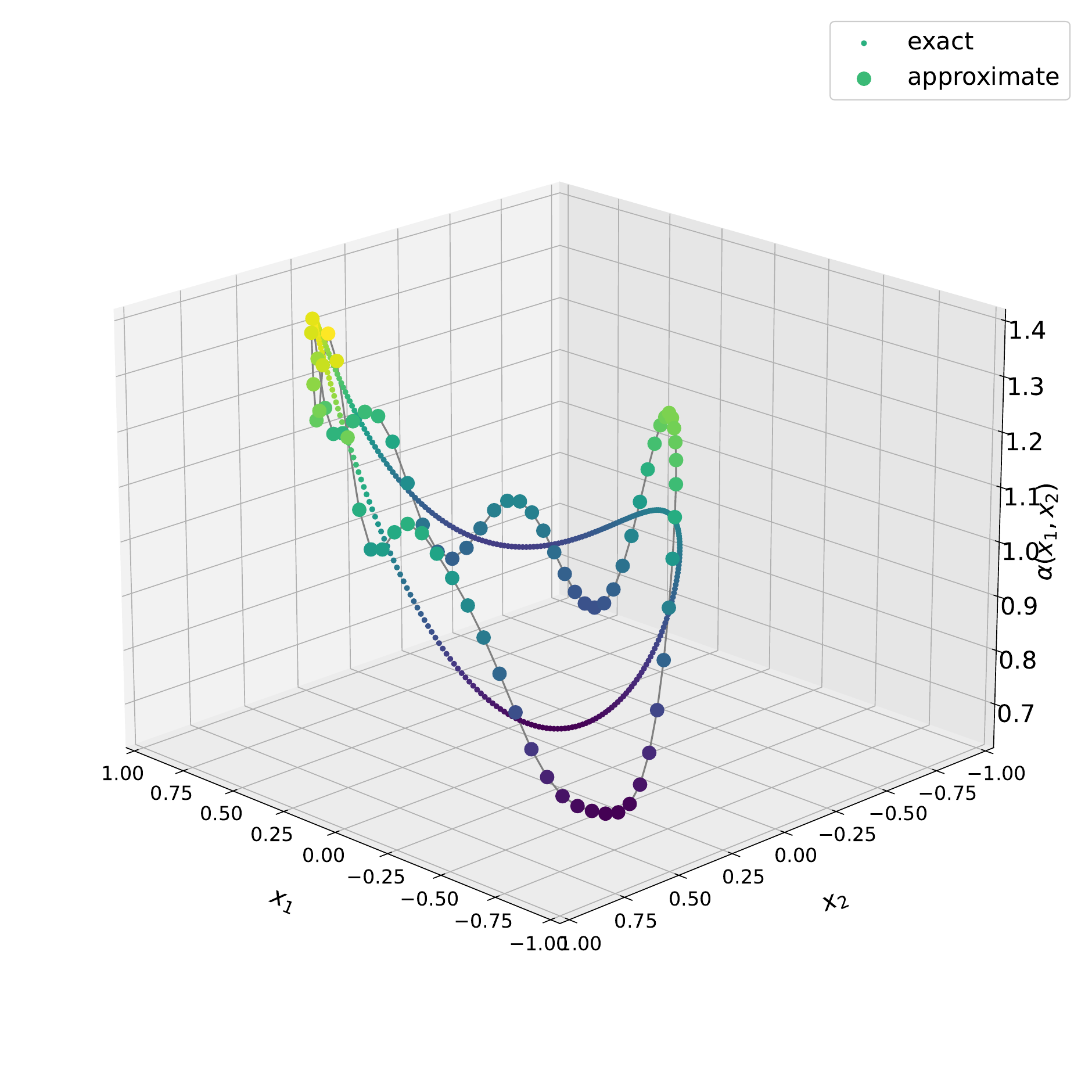}} &
	\resizebox{0.325\linewidth}{!}{\includegraphics{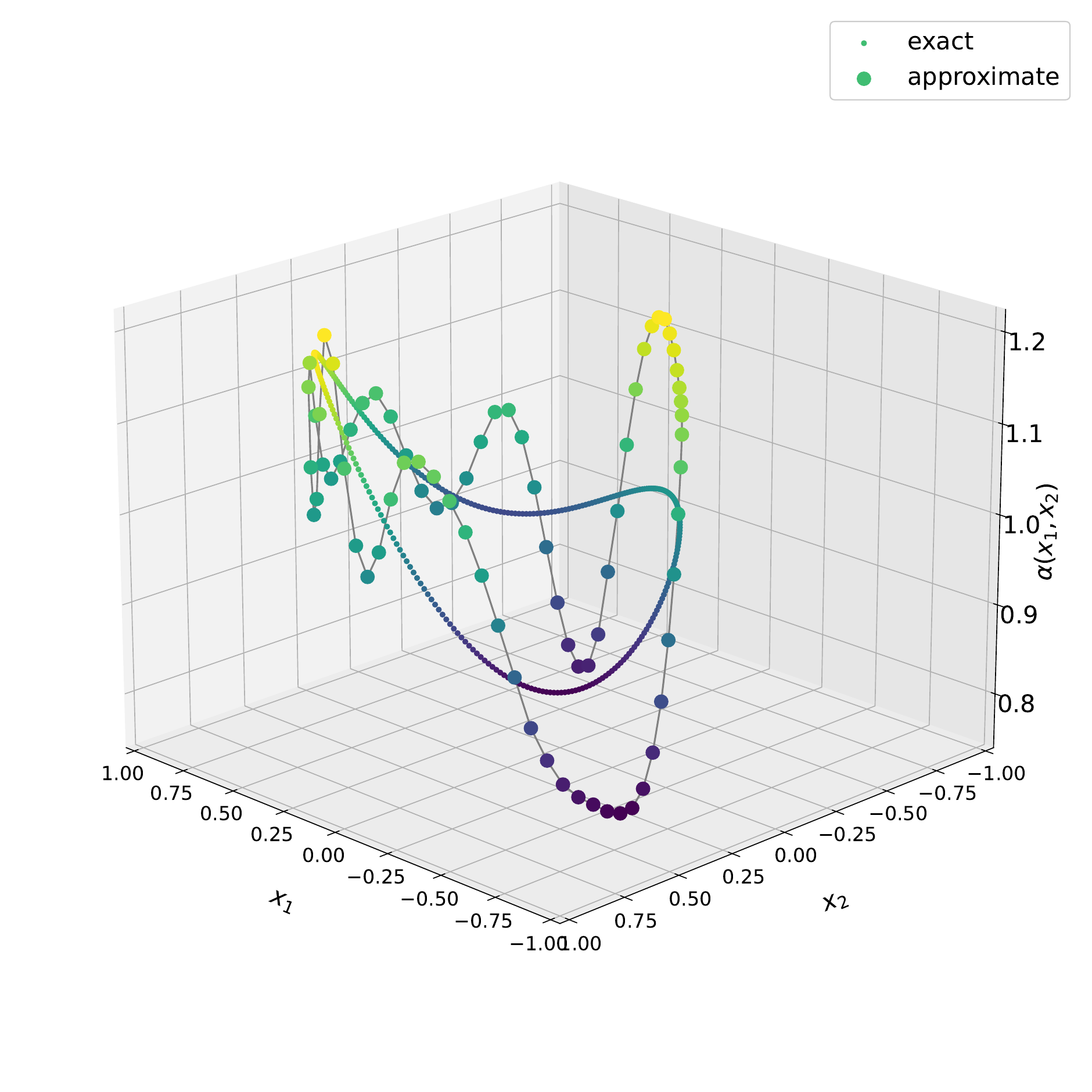}} &
	\resizebox{0.325\linewidth}{!}{\includegraphics{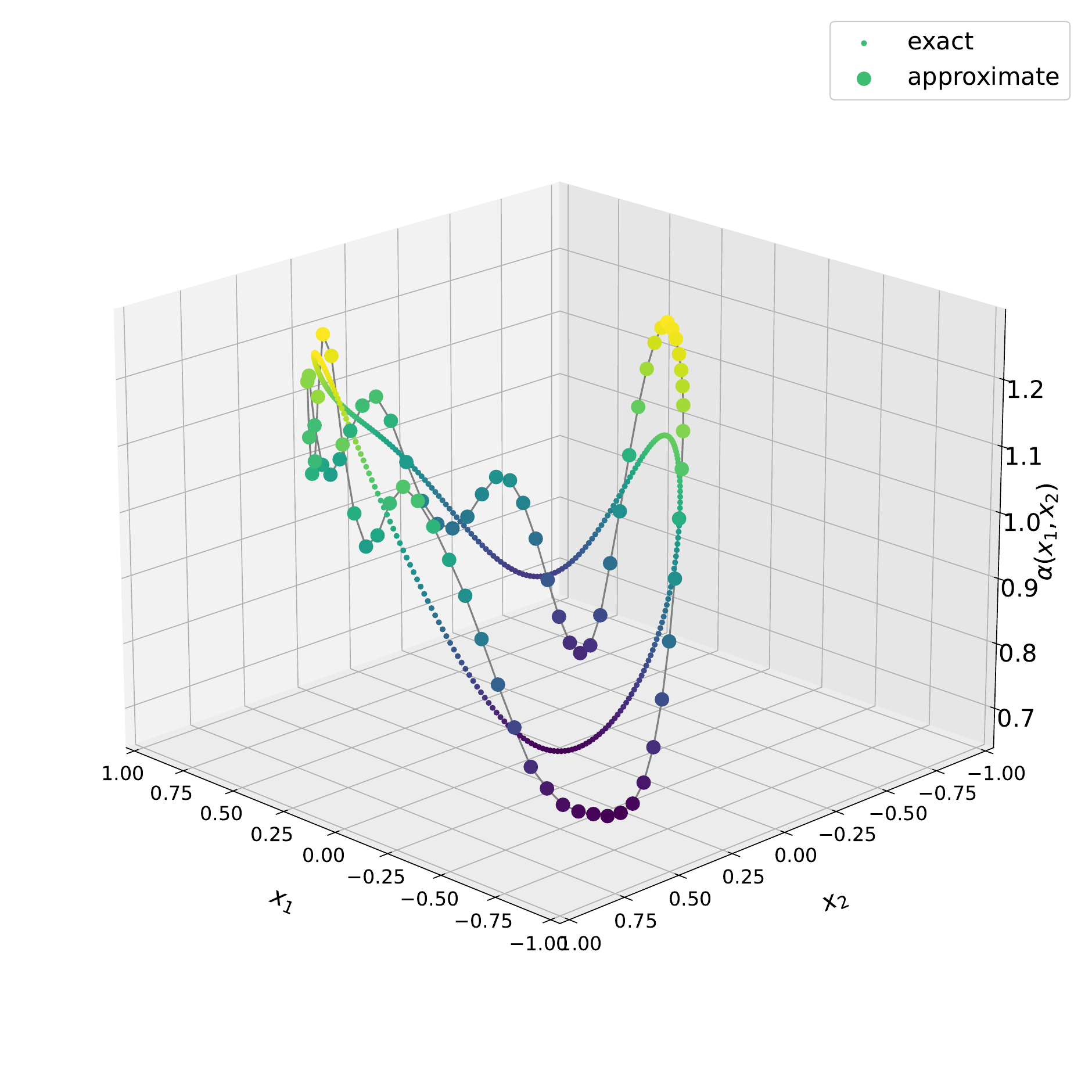}} \\ \hline
\textbf{B2} & \resizebox{0.325\linewidth}{!}{\includegraphics{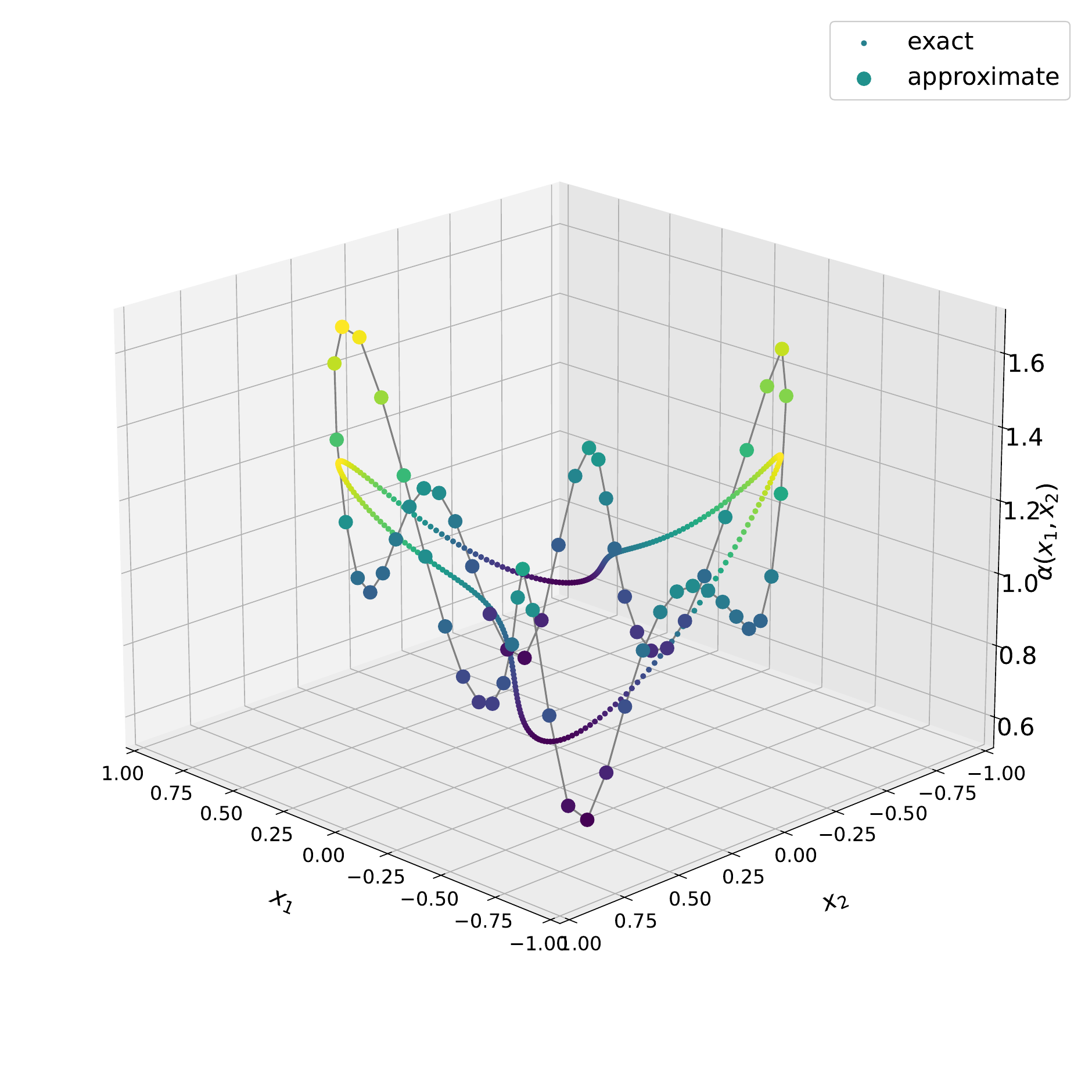}} &
	\resizebox{0.325\linewidth}{!}{\includegraphics{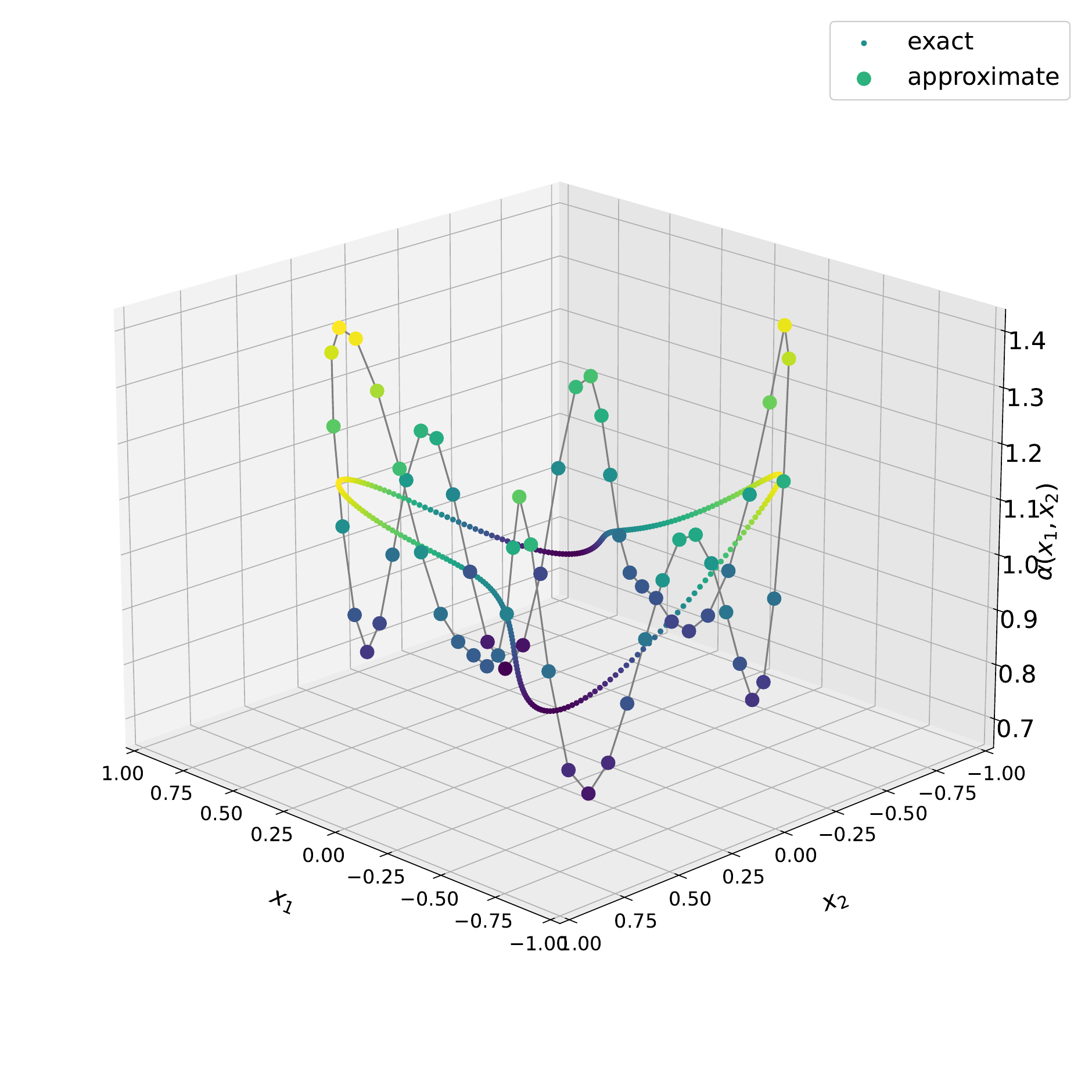}} &
	\resizebox{0.325\linewidth}{!}{\includegraphics{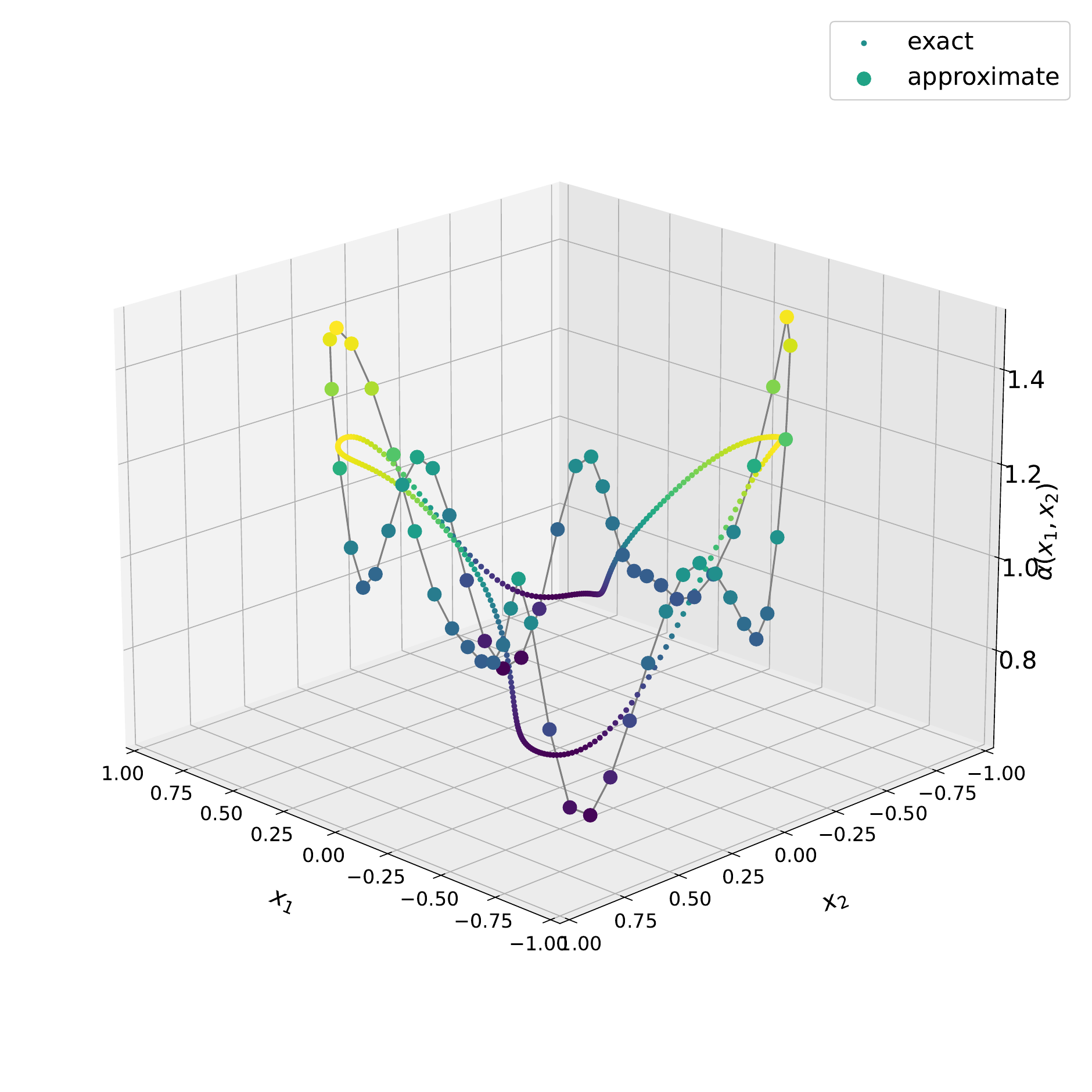}} \\ \hline
\textbf{B3} & \resizebox{0.325\linewidth}{!}{\includegraphics{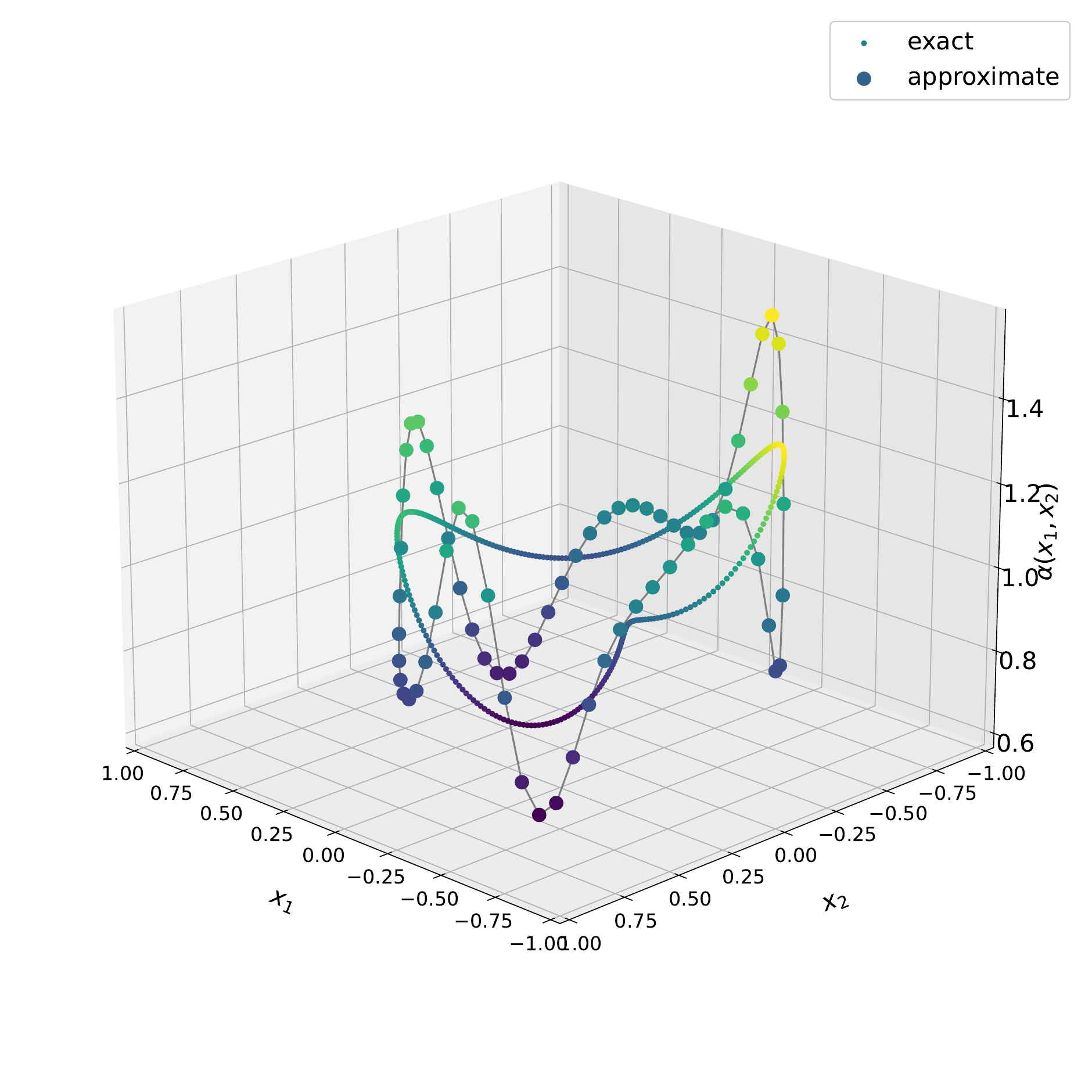}} &
	\resizebox{0.325\linewidth}{!}{\includegraphics{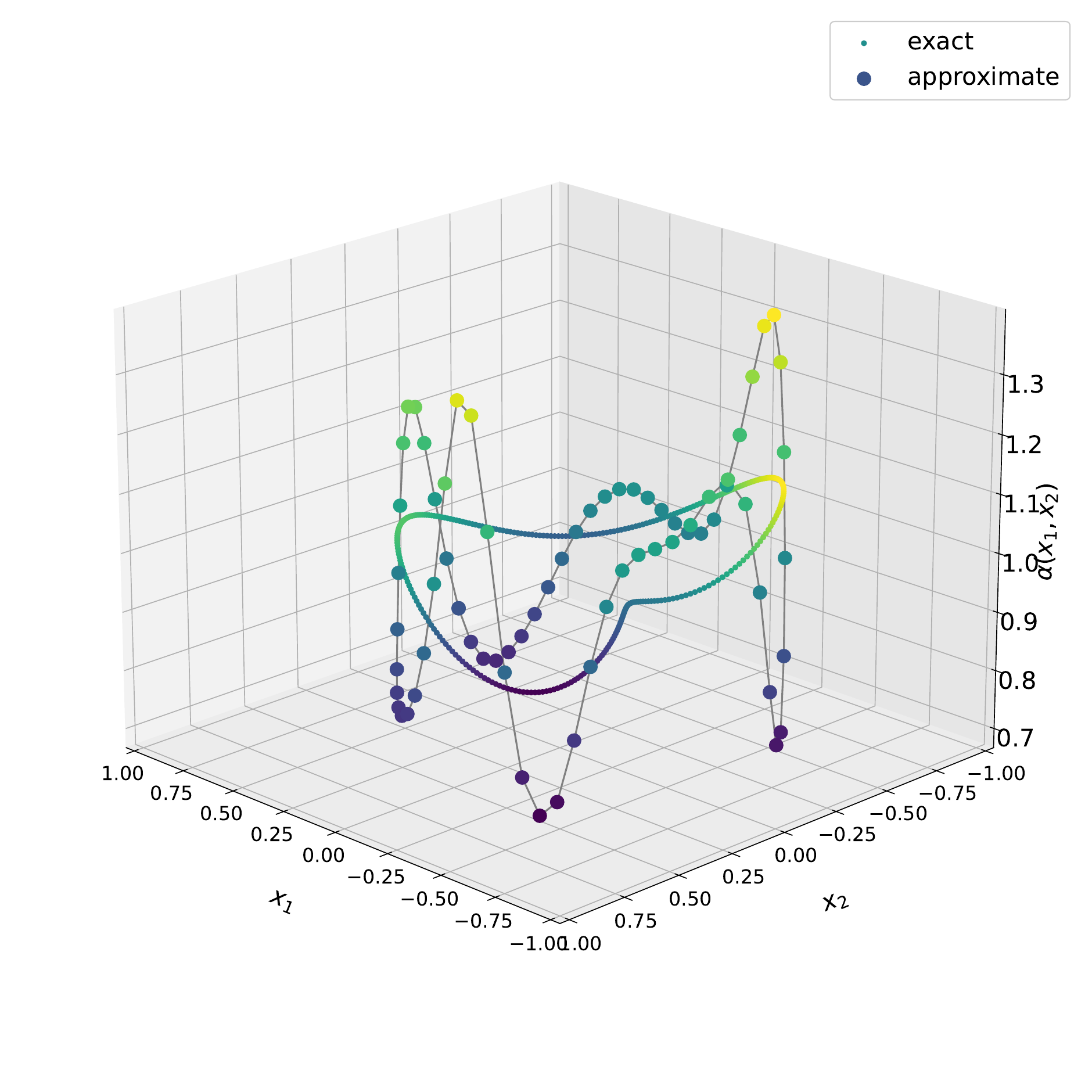}} &
	\resizebox{0.325\linewidth}{!}{\includegraphics{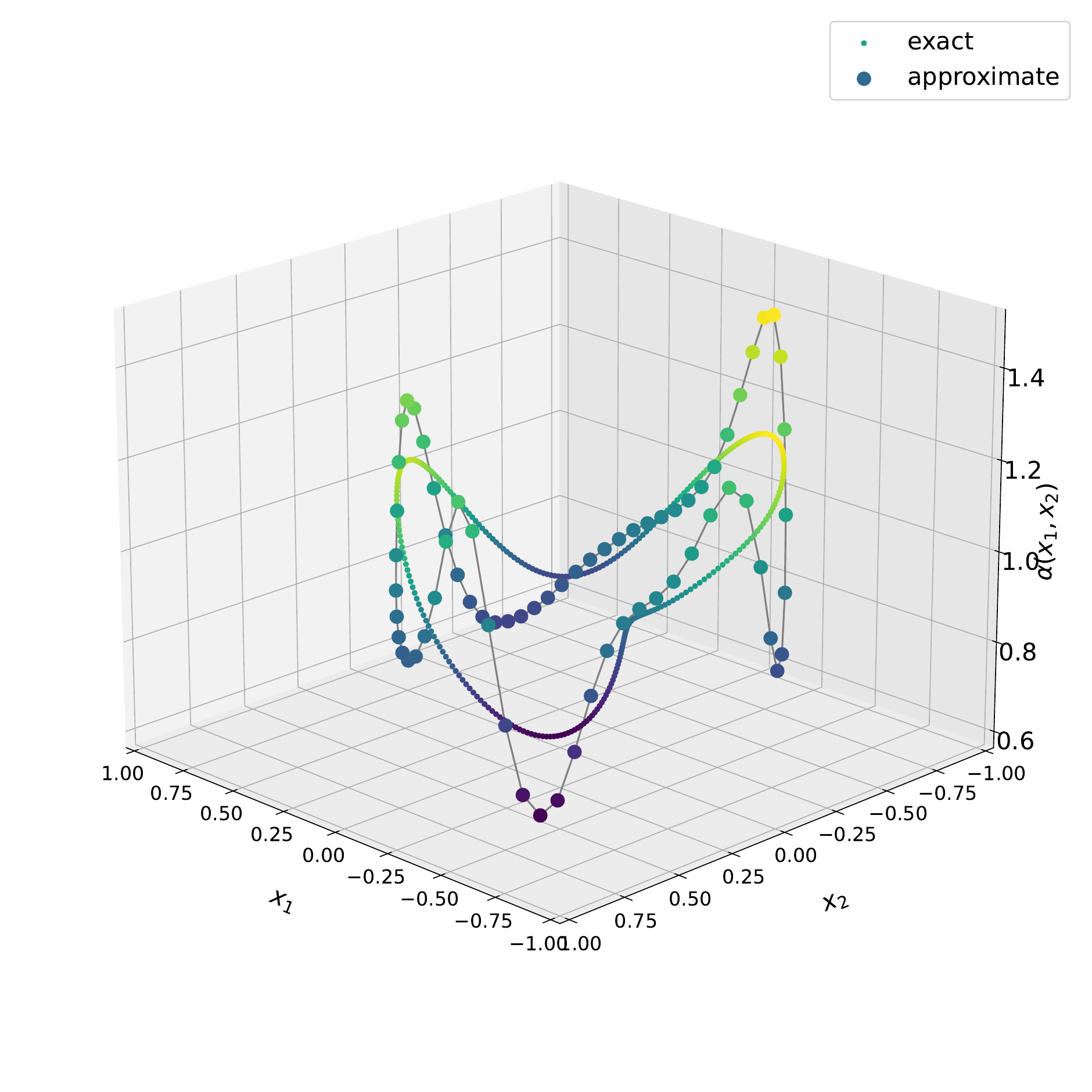}} \\ \hline
\end{tabular}
\end{adjustbox}
\caption{3D view of the reconstructed Robin coefficient, rotated by $135^\circ$, under $0.5\%$ noise}
\label{tab:noisy_robin_135}
\end{table}

\begin{figure}[htp!]
\centering
\resizebox{0.325\linewidth}{!}{\includegraphics{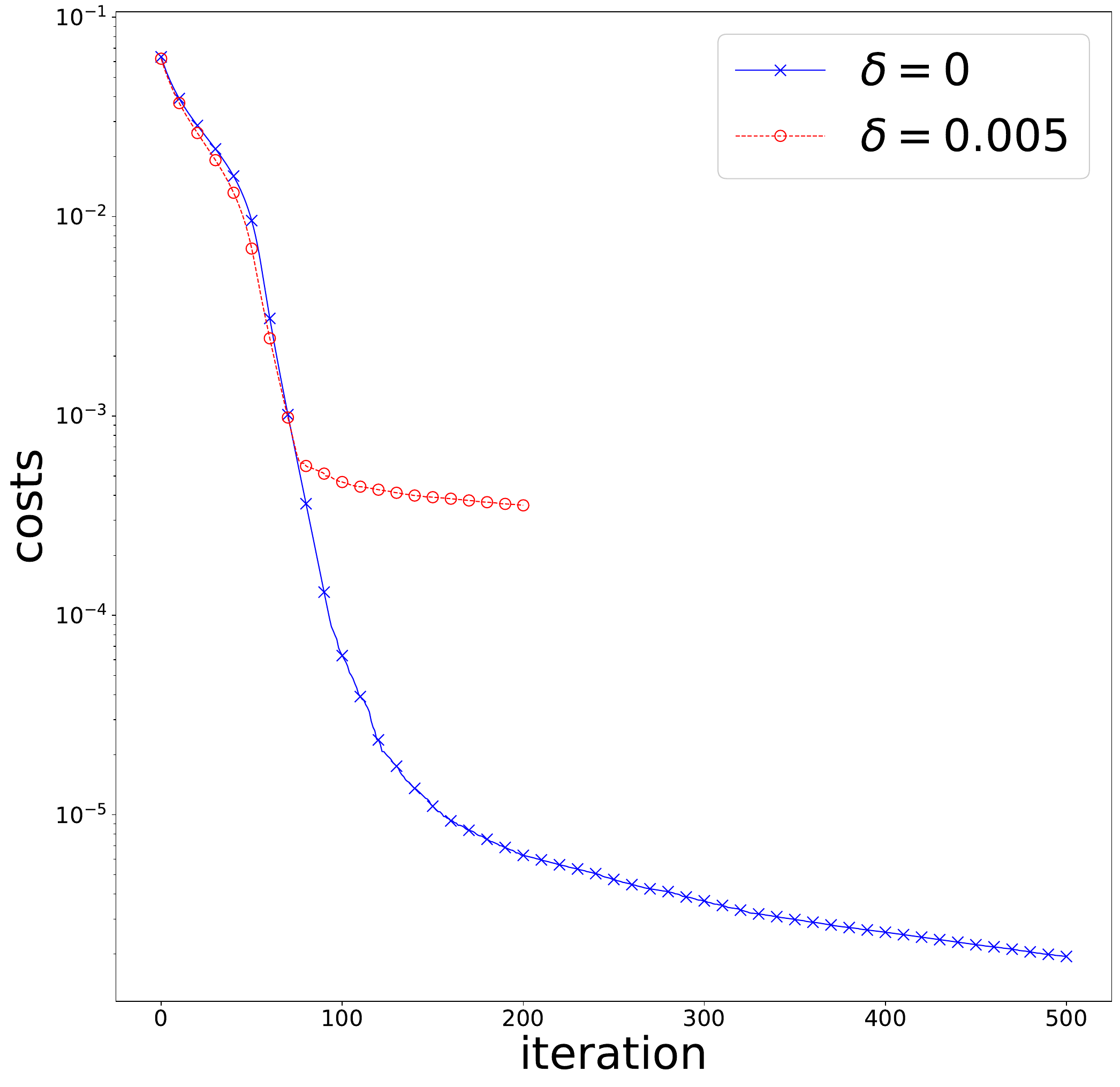}} 
\resizebox{0.325\linewidth}{!}{\includegraphics{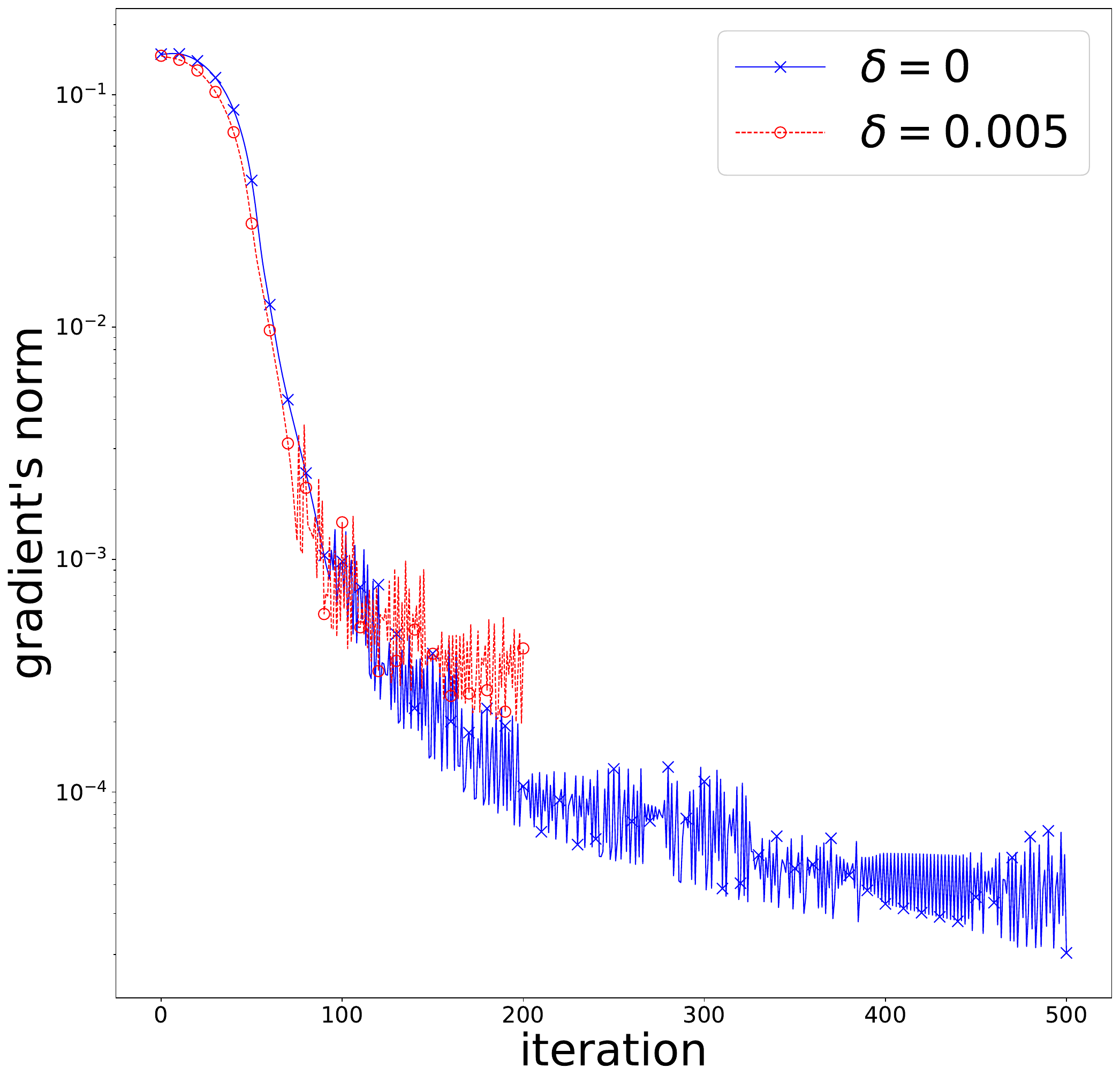}} 
\resizebox{0.325\linewidth}{!}{\includegraphics{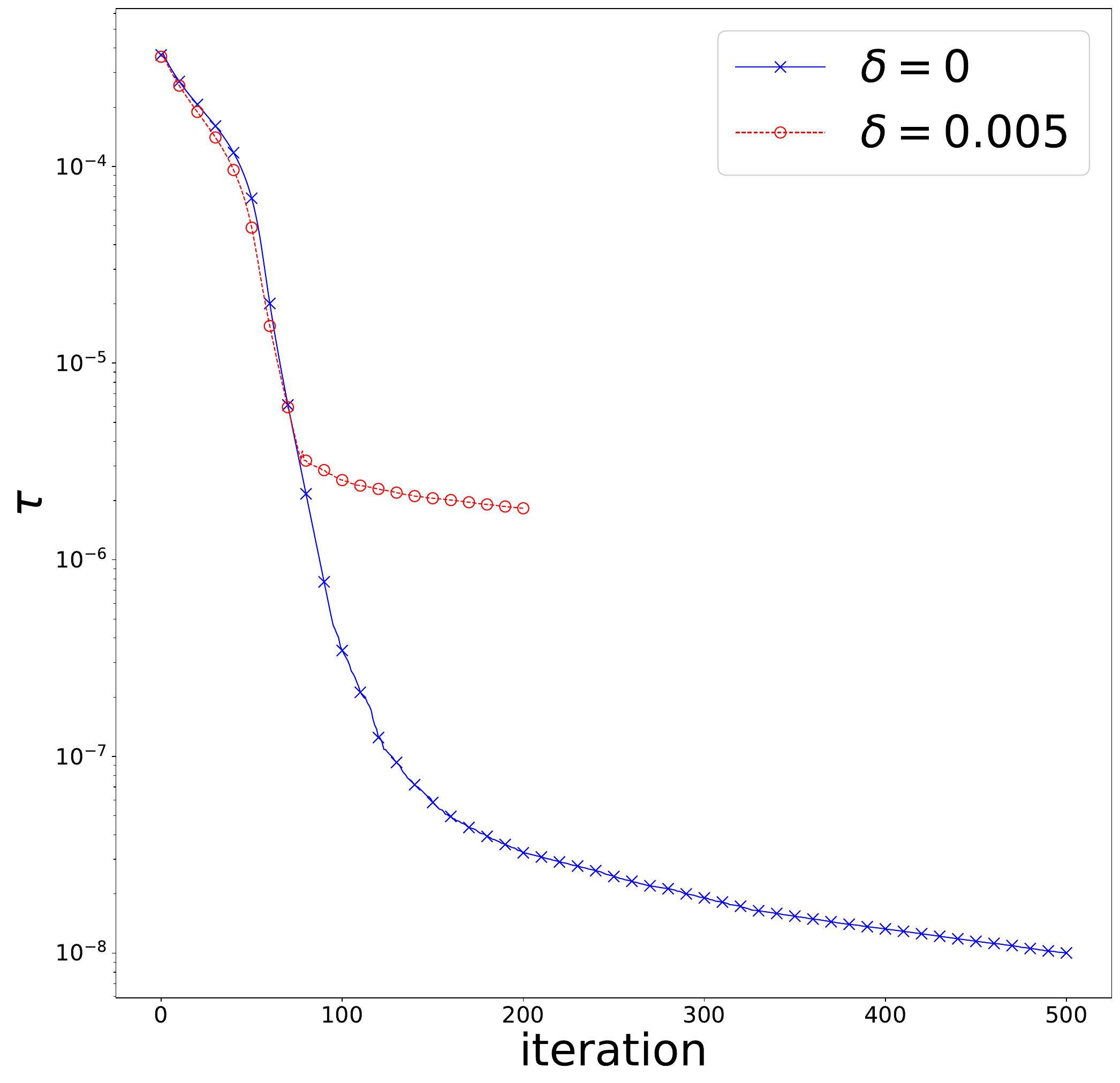}} 
\caption{Histories of cost values, gradient norms, and parameter $\tau$ for Case A3-B3}
\label{fig:summary_A3B3}
\end{figure}
\newpage
\section{Conclusion}\label{sec:conclusion} 
In this study, we presented a numerical method using the Kohn-Vogelius framework to simultaneously reconstruct an unknown inclusion and the Robin coefficient associated with an electrostatic problem. 
By utilizing gradient information derived from the energy-gap cost functional with respect to both the inclusion's shape and the Robin parameter, we formulated an iterative algorithm implemented via the finite element method. 
Numerical experiments demonstrated the method's feasibility and effectiveness for various inclusion geometries and Robin coefficients, provided exact measurements were available. 
For noisy data, however, incorporating multiple Cauchy pairs appears essential to achieve accurate reconstructions. 
Future work will address this aspect and be documented in a subsequent study.

\medskip
\section*{Acknowledgements} JFTR is supported by the JSPS Postdoctoral Fellowships for Research in Japan and partially by the JSPS Grant-in-Aid for Early-Career Scientists under Japan Grant Number JP23K13012 and the JST CREST Grant Number JPMJCR2014.

%
%
\bibliographystyle{alpha} 
\bibliography{main}   

@article{Harris2023,
	author = {I. Harris},
	date-added = {2024-11-25 17:36:57 +0900},
	date-modified = {2024-11-25 17:38:24 +0900},
	journal = {Appl. Anal.},
	number = {5},
	pages = {1511--1529},
	title = {A direct method for reconstructing inclusions and boundary conditions from electrostatic data},
	volume = {102},
	year = {2023}}

@unpublished{MachidaNotsuRabago2024,
	author = {M. Machida and H. Notsu and J. F. T. Rabago},
	date-added = {2024-11-24 14:42:33 +0900},
	date-modified = {2024-11-24 14:43:49 +0900},
	month = {November},
	note = {in preparation},
	title = {Reconstruction approach for optical tomography with fluorophores by single boundary measurement},
	year = {2024}}

@article{Meftahi2021,
	author = {H. Meftahi},
	date-added = {2024-11-24 14:42:31 +0900},
	date-modified = {2025-01-31 20:51:12 +0900},
	journal = {SIAM J. Math. Anal.},
	number = {6},
	pages = {6326--6354},
	title = {Uniqueness, {L}ipschitz stability, and reconstruction for the inverse optical tomography problem},
	volume = {53},
	year = {2021}}

@article{Clason2012,
	author = {C. Clason},
	date-added = {2024-11-24 14:06:07 +0900},
	date-modified = {2024-11-24 14:06:07 +0900},
	journal = {Inverse Probl.},
	pages = {104007},
	title = {$L^{\infty}$ fitting for inverse problems with uniform noise},
	volume = {28},
	year = {2012}}

@article{ClasonJin2012,
	author = {C. Clason and B. Jin},
	date-added = {2024-11-24 14:06:07 +0900},
	date-modified = {2024-11-24 14:06:07 +0900},
	journal = {SIAM J. Imaging Sci.},
	pages = {505--536},
	title = {A semismooth {N}ewton method for nonlinear parameter identification problems with impulsive noise},
	volume = {5},
	year = {2012}}

@article{ClasonJinKunisch2010,
	author = {C. Clason and B. Jin and K. Kunisch},
	date-added = {2024-11-24 14:06:07 +0900},
	date-modified = {2024-11-24 14:06:07 +0900},
	journal = {SIAM J. Sci. Comput.},
	pages = {1484--1505},
	title = {A duality-based splitting method for $l^{1}$-TV image restoration with automatic regularization parameter choice},
	volume = {32},
	year = {2010}}

@article{ClasonJinKunisch2010b,
	author = {C. Clason and B. Jin and K. Kunisch},
	date-added = {2024-11-24 14:06:07 +0900},
	date-modified = {2024-11-24 14:06:07 +0900},
	journal = {SIAM J. Imaging Sci.},
	pages = {199--231},
	title = {A semismooth Newton method for $l^{1}$ data fitting with automatic choice of regularization parameters and noise calibration},
	volume = {3},
	year = {2010}}

@article{ItoJinTakeuchi2011,
	author = {K. Ito and B. Jin and T. Takeuchi},
	date-added = {2024-11-24 14:06:07 +0900},
	date-modified = {2024-11-24 14:06:07 +0900},
	journal = {SIAM J. Sci. Comput.},
	pages = {1415--1438},
	title = {A regularization parameter for nonsmooth Tikhonov regularization},
	volume = {33},
	year = {2011}}

@book{Azegami2020,
	address = {Singapore},
	author = {H. Azegami},
	date-added = {2024-11-24 13:46:20 +0900},
	date-modified = {2024-11-24 13:46:59 +0900},
	publisher = {Springer},
	series = {Springer Optimization and Its Applications},
	title = {Shape Optimization Problems},
	volume = {164},
	year = {2020}}

@article{LinFang2005,
	author = {F. Lin and W. Fang},
	date-added = {2024-11-22 15:00:39 +0900},
	date-modified = {2024-11-22 15:01:28 +0900},
	journal = {Inverse Probl.},
	pages = {1757--1772},
	title = {A linear integral equation approach to the Robin inverse problem},
	volume = {21},
	year = {2005}}

@article{Jin2007,
	author = {B. Jin},
	date-added = {2024-11-22 14:59:17 +0900},
	date-modified = {2024-11-22 15:12:05 +0900},
	journal = {Internat. J. Numer. Methods Engrg.},
	pages = {433--453},
	title = {Conjugate gradient method for the robin inverse problem associated with the Laplace equation},
	volume = {71},
	year = {2007}}

@article{Hauptmannetal2019,
	author = {A. Hauptmann and M. Ikehata and H. Itou and S. Siltanen},
	date-added = {2024-11-22 14:57:20 +0900},
	date-modified = {2024-11-22 14:58:53 +0900},
	journal = {Inverse Probl.},
	number = {025004},
	pages = {24 pages},
	title = {Revealing cracks inside conductive bodies by electric surface measurements},
	volume = {35},
	year = {2019}}

@article{ChaabaneElhechmiJaoua2004,
	author = {S. Chaabane and C. Elhechmi and M. Jaoua},
	date-added = {2024-11-22 14:53:11 +0900},
	date-modified = {2024-11-22 14:54:36 +0900},
	journal = {Math. Comput. Simulation},
	pages = {367--38},
	title = {A stable recovery method for the {R}obin inverse problem},
	volume = {66},
	year = {2004}}

@article{AfraitesRabago2024,
	author = {L. Afraites and J. F. T. Rabago},
	date-added = {2024-11-22 14:48:42 +0900},
	date-modified = {2024-11-22 14:49:34 +0900},
	journal = {Comput. Appl. Math.},
	number = {270},
	pages = {37 pages},
	title = {Boundary shape reconstruction with Robin condition: existence result, stability analysis, and inversion via multiple measurements},
	volume = {43},
	year = {2024}}

@article{KaupSantosaVogelius1996,
	author = {P. G. Kaup and F. Santosa and M. Vogelius},
	date-added = {2024-11-21 14:59:17 +0900},
	date-modified = {2024-11-21 15:00:34 +0900},
	journal = {Inverse Problems},
	pages = {279--293},
	title = {Method for imaging corrosion damage in thin plates from electrostatic data},
	volume = {12},
	year = {1996}}

@article{FangLu2004,
	author = {W. Fang and M. Lu},
	date-added = {2022-09-04 11:07:30 +0900},
	date-modified = {2022-09-04 11:09:11 +0900},
	journal = {Internat. J. Numer. Methods Engrg.},
	pages = {1563--1585},
	title = {A fast collocation method for an inverse boundary value problem,},
	volume = {59},
	year = {2004}}

@article{AfraitesRabago2025,
	author = {L. Afraites and J. F. T. Rabago},
	date-added = {2022-08-30 10:15:18 +0900},
	date-modified = {2024-11-21 17:28:47 +0900},
	journal = {Discrete Contin. Dyn. Syst. Ser. S},
	number = {1},
	pages = {43--76},
	title = {Shape optimization methods for detecting an unknown boundary with the {R}obin condition by a single measurement},
	volume = {18},
	year = {2025}}

@article{CakoniKressSchuft2010b,
	author = {F. Cakoni and R. Kress and C. Schuft},
	date-added = {2022-08-30 09:53:27 +0900},
	date-modified = {2022-08-30 09:54:36 +0900},
	journal = {Methods Appl. Anal.},
	pages = {357--378},
	title = {Simultaneous reconstruction of shape and impedance in corrosion detection},
	volume = {17},
	year = {2010}}

@article{CakoniKressSchuft2010a,
	author = {F. Cakoni and R. Kress and C. Schuft},
	date-added = {2022-08-30 09:50:49 +0900},
	date-modified = {2022-08-30 09:54:31 +0900},
	journal = {Inverse Problems},
	pages = {Art. 095012 24pp.},
	title = {Integral equations for inverse problems in corrosion detection from partial {C}auchy data},
	volume = {26},
	year = {2010}}

@article{Sincich2010,
	author = {E. Sincich},
	date-added = {2022-08-30 09:43:37 +0900},
	date-modified = {2022-08-30 09:44:25 +0900},
	journal = {SIAM J. Math. Anal.},
	pages = {2922--2943},
	title = {Stability for the determination of unknown boundary and impedance with a {R}obin boundary condition},
	volume = {42},
	year = {2010}}

@article{PaganiPierotti2009,
	author = {C. D. Pagani and D. Peerotti},
	date-added = {2022-08-30 09:30:11 +0900},
	date-modified = {2023-11-16 11:37:01 +0900},
	journal = {Inverse Problems},
	pages = {Art. 055007 12pp},
	title = {Identifiability problems of defects with {R}obin condition},
	volume = {25},
	year = {2009}}

@article{Fang2022,
	author = {W. Fang},
	date-added = {2022-08-25 08:38:03 +0900},
	date-modified = {2022-08-25 08:39:10 +0900},
	journal = {J. Comput. Appl. Math.},
	pages = {Art. 114376 13 pp},
	title = {Simultaneous recovery of {R}obin boundary and coefficient for the {L}aplace equation by shape derivative},
	volume = {413},
	year = {2022}}

@article{FangZeng2009,
	author = {W. Fang and S. Zeng},
	date-added = {2022-08-25 08:30:17 +0900},
	date-modified = {2022-08-25 08:39:36 +0900},
	journal = {J. Comput. Appl. Math.},
	pages = {573--580},
	title = {Numerical recovery of {R}obin boundary from boundary measurements for the {L}aplace equation},
	volume = {224},
	year = {2009}}

@article{FangLinMa2019,
	author = {W. Fang and F. Lin and Y. Ma},
	date-added = {2022-08-25 08:28:34 +0900},
	date-modified = {2022-08-25 08:29:50 +0900},
	journal = {East Asian J. Appl. Math.},
	pages = {485--505},
	title = {Fast algorithms for boundary integral equations on elliptic domains and related inverse problems},
	volume = {9},
	year = {2019}}

@article{KohnVogelius1987,
	author = {R. Kohn and M. Vogelius},
	date-added = {2022-08-16 07:05:22 +0900},
	date-modified = {2022-08-16 07:08:09 +0900},
	journal = {Commun. Pure Appl. Math.},
	number = {6},
	pages = {745--777},
	title = {Relaxation of a variational method for impedance computed tomography},
	volume = {40},
	year = {1987}}

@article{AfraitesDambrineKateb2007,
	author = {L. Afraites and M. Dambrine and D. Kateb},
	date-added = {2022-08-14 11:57:21 +0900},
	date-modified = {2022-08-14 11:58:27 +0900},
	journal = {Numer. Funct. Anal. Optim.},
	number = {5--6},
	pages = {519--551},
	title = {Shape methods for the transmission problem with a single measurement},
	volume = {28},
	year = {2007}}

@article{AfraitesMasnaouiNachaoui2022,
	author = {L. Afraites and C. Masnaoui and M. Nachaoui},
	date-added = {2022-08-14 11:49:45 +0900},
	date-modified = {2022-08-14 11:51:05 +0900},
	journal = {Discrete Contin. Dyn. Syst. Ser. S},
	number = {1},
	pages = {1--21},
	title = {Shape optimization method for an inverse geometric source problem and stability at critical shape},
	volume = {15},
	year = {2022}}

@article{CaubetDambrineKatebTimimoun2013,
	author = {F. Caubet and M. Dambrine and D. Kateb and C. Z. Timimoun.},
	date-added = {2022-08-14 11:46:14 +0900},
	date-modified = {2022-08-14 11:48:36 +0900},
	journal = {Inverse Prob. Imaging},
	number = {1},
	pages = {123--157},
	title = {A {K}ohn-{V}ogelius formulation to detect an obstacle immersed in a fluid},
	volume = {7},
	year = {2013}}

@article{AfraitesDambrineKateb2008,
	author = {L. Afraites and M. Dambrine and D. Kateb},
	date-added = {2022-08-14 11:41:33 +0900},
	date-modified = {2022-08-22 08:19:39 +0900},
	journal = {SIAM J. Control Optim.},
	number = {3},
	pages = {1556--1590},
	title = {On second order shape optimization methods for electrical impedance tomography},
	volume = {47},
	year = {2008}}

@book{ColtonKress2013,
	address = {New York},
	author = {R. Kress and D. Colton},
	date-added = {2022-08-05 14:14:01 +0900},
	date-modified = {2022-08-06 12:44:48 +0900},
	edition = {3rd},
	editor = {S. S. Antman and P. Holmes and K. Sreenivasan},
	publisher = {Springer},
	series = {Applied Mathematical Sciences},
	title = {Inverse Acoustic and Electromagnetic Scattering Theory},
	volume = {93},
	year = {1998}}

@article{Bacchelli2009,
	author = {V. Bacchelli},
	date-added = {2022-08-01 18:05:13 +0900},
	date-modified = {2022-08-01 18:06:48 +0900},
	journal = {Inverse Problems},
	pages = {Art. 015004 (4pp)},
	title = {Uniqueness for the determination of unknown boundary and impedance with the homogeneous {R}obin condition},
	volume = {25},
	year = {2009}}

@article{KaupSantosa1995,
	author = {P. G. Kaup and F. Santosa},
	date-added = {2022-08-01 17:53:33 +0900},
	date-modified = {2022-08-01 17:56:19 +0900},
	journal = {J. Nondestr. Eval.},
	pages = {127--136},
	title = {Nondestructive evaluation of corrosion damage using electrostatic measurements},
	volume = {14},
	year = {1995}}

@article{FasinoInglese2007,
	author = {D. Fasino and G. Inglese},
	date-added = {2022-08-01 17:50:07 +0900},
	date-modified = {2022-08-01 17:51:17 +0900},
	journal = {J. Comput. Appl. Math.},
	pages = {460--470},
	title = {Recovering nonlinear terms in an inverse boundary value problem for {L}aplace's equation: a stability estimate},
	volume = {198},
	year = {2007}}

@article{Rundell2008,
	author = {W. Rundell},
	date-added = {2022-07-29 18:33:57 +0900},
	date-modified = {2022-07-29 18:35:06 +0900},
	journal = {Inverse Problems},
	pages = {1--22},
	title = {Recovering an obstacle and its impedance from {C}auchy data},
	volume = {24},
	year = {2008}}

@article{IngleseMariani2004,
	author = {G. Inglese and F. Mariani},
	date-added = {2022-07-29 18:30:27 +0900},
	date-modified = {2022-07-29 18:31:40 +0900},
	journal = {Inverse Problems},
	pages = {1207--1215},
	title = {Corrosion detection in conducting boundaries},
	volume = {20},
	year = {2004}}

@article{ChaabaneJaoua1999,
	author = {S. Chaabane and M. Jaoua},
	date-added = {2022-07-29 18:02:02 +0900},
	date-modified = {2024-11-21 14:58:17 +0900},
	journal = {Inverse Problems},
	pages = {1425--1438},
	title = {Identification of {R}obin coefficients by means of boundary measurements},
	volume = {15},
	year = {1999}}

@article{CakoniKress2007,
	author = {F. Cakoni and R. Kress},
	date-added = {2022-07-29 17:08:48 +0900},
	date-modified = {2022-09-06 19:56:51 +0900},
	journal = {Inverse Prob. Imaging},
	pages = {229--245},
	title = {Integral equations for inverse problems in corrosion detection from partial Cauchy data},
	volume = {1},
	year = {2007}}

@article{Doganetal2007,
	author = {G. Do\v{g}an and P. Morin and R.H. Nochetto and M. Verani},
	date-added = {2021-08-12 18:01:46 +0900},
	date-modified = {2021-08-12 18:02:58 +0900},
	journal = {Comput. Methods Appl. Mech. Engrg.},
	pages = {3898--3914},
	title = {Discrete gradient flows for shape optimization and applications},
	volume = {196},
	year = {2007}}

@article{RabagoAzegami2020,
	author = {J. F. T. Rabago and H. Azegami},
	date-modified = {2021-11-22 14:23:44 +0900},
	journal = {Comput. Optim. Appl.},
	number = {1},
	pages = {251--305},
	title = {A second-order shape optimization algorithm for solving the exterior {B}ernoulli free boundary problem using a new boundary cost functional},
	volume = {77},
	year = {2020},
	bdsk-url-1 = {https://doi.org/10.1007/s10589-020-00199-7}}

@article{RabagoAzegami2019b,
	author = {J. F. T. Rabago and H. Azegami},
	journal = {Evol. Equ. Control Theory},
	number = {4},
	pages = {785-824},
	title = {A new energy-gap cost functional cost functional approach for the exterior {B}ernoulli free boundary problem},
	volume = {8},
	year = {2019}}

@article{RabagoAzegami2018,
	author = {J. F. T. Rabago and H. Azegami},
	journal = {Japan J. Indust. Appl. Math.},
	number = {1},
	pages = {131-176},
	title = {Shape optimization approach to defect-shape identification with convective boundary condition via partial boundary measurement},
	volume = {31},
	year = {2018}}

@book{Neuberger1997,
	address = {Berlin},
	author = {J. W. Neuberger},
	publisher = {Springer-Verlag},
	title = {Sobolev Gradients and Differential Equations},
	year = {1997}}

@book{HenrotPierre2018,
	address = {Z\"{u}rich},
	author = {A. Henrot and M. Pierre},
	publisher = {European Mathematical Society},
	series = {Tracts in Mathematics},
	title = {Shape Variation and Optimization: A Geometrical Analysis},
	volume = {28},
	year = {2018}}

@article{Hecht2012,
	author = {F. Hecht},
	journal = {J. Numer. Math.},
	pages = {251-265},
	title = {New development in {F}ree{F}em++},
	volume = {20},
	year = {2012}}

@article{EpplerHarbrecht2012a,
	author = {K. Eppler and H. Harbrecht},
	journal = {Comput. Optim. App.},
	pages = {69-85},
	title = {On a {K}ohn-{V}ogelius like formulation of free boundary problems},
	volume = {52},
	year = {2012}}

@article{EpplerHarbrecht2005,
	author = {K. Eppler and H. Harbrecht},
	journal = {Control Cybern.},
	pages = {203-225},
	title = {A regularized {N}ewton method in electrical impedance tomography using shape Hessian information},
	volume = {34},
	year = {2005}}

@article{BenAbdaetal2013,
	author = {A. Ben Abda and F. Bouchon and G. H. Peichl and M. Sayeh and R. Touzani},
	date-modified = {2022-10-13 08:44:36 +0900},
	journal = {J. Eng. Math.},
	pages = {157-176},
	title = {A {D}irichlet-{N}eumann cost functional approach for the {B}ernoulli problem},
	volume = {81},
	year = {2013}}

@article{BacaniPeichl2013,
	author = {J. B. Bacani and G. H. Peichl},
	date-added = {2020-06-17 16:43:53 +0900},
	date-modified = {2022-10-13 08:44:27 +0900},
	journal = {Abstr. Appl. Anal.},
	pages = {19 pp. Article ID 384320},
	title = {On the first-order shape derivative of the {K}ohn-{V}ogelius cost functional of the {B}ernoulli problem},
	volume = {2013},
	year = {2013}}

@article{Azegami1994,
	author = {H. Azegami},
	journal = {Trans. Jpn. Soc. Mech. Eng., Ser. A.},
	pages = {1479-1486},
	title = {A solution to domain optimization problems},
	volume = {60},
	year = {1994}}
\end{document}